\newtheorem{theorem}{Theorem}[section]
\newtheorem{proposition}[theorem]{Proposition}
\newtheorem{lemma}[theorem]{Lemma}
\newtheorem{corollary}[theorem]{Corollary}
\theoremstyle{definition}
\newtheorem{definition}[theorem]{Definition}
\newtheorem{example}[theorem]{Example}
\newtheorem{remark}[theorem]{Remark}
\newcommand{\N}{{\mathbb{N}}}
\newcommand{\R}{{\mathbb{R}}}
\newcommand{\Z}{{\mathbb{Z}}}
\newcommand{\Cinf}{C^\infty}
\newcommand{\cor}{{\bf k}}
\renewcommand{\to}[1][]{\xrightarrow[]{#1}}
\newcommand{\from}[1][]{\xleftarrow[]{#1}}
\newcommand{\isoto}[1][]{\xrightarrow[#1]%
{{\raisebox{-.6ex}[0ex][-.6ex]{$\mspace{1mu}\sim\mspace{2mu}$}}}}
\newcommand{\isofrom}[1][]{\xleftarrow[#1]%
{{\raisebox{-.6ex}[0ex][-.6ex]{$\mspace{1mu}\sim\mspace{2mu}$}}}}
\newcommand{\RR}{\mathrm{R}}
\newcommand{\Hom}{\mathrm{Hom}}
\newcommand{\Ext}{\mathrm{Ext}}
\renewcommand{\hom}{{\mathcal{H}om}}
\newcommand{\rhom}{{\RR\hom}}
\newcommand{\DD}{\mathrm{D}}
\newcommand{\tens}{\otimes}
\newcommand{\ltens}{\mathbin{\overset{\scriptscriptstyle\mathrm{L}}\tens}}
\newcommand{\sect}{\Gamma}
\newcommand{\rsect}{\mathrm{R}\Gamma}
\newcommand{\roim}[1]{\RR{#1}_*}
\newcommand{\reim}[1]{\RR{#1}_!}
\newcommand{\opb}[1]{#1^{-1}}
\newcommand{\epb}[1]{#1^{!}}
\newcommand{\eqdot}{\mathbin{:=}}
\newcommand{\cl}{\colon}
\newcommand{\pointdiag}{\makebox[0mm]{\;\;.}}
\newcommand{\virgdiag}{\makebox[0mm]{\;\;,}}
\newcommand{\scbul}{{\,\raise.4ex\hbox{$\scriptscriptstyle\bullet$}\,}}
\newcommand{\ol}{\overline}
\newcommand{\id}{\mathrm{id}}
\newcommand{\supp}{\operatorname{supp}}
\newcommand{\SSi}{\mathrm{SS}}
\newcommand{\Int}{\operatorname{Int}}
\newcommand{\Der}{\mathsf{D}}
\newcommand{\Derb}{\Der^{\mathrm{b}}}
\newcommand{\Derlb}{\Der^{\mathrm{lb}}}
\newcommand{\Mod}{\operatorname{Mod}}
\newcommand{\coker}{\operatorname{coker}}
\newcommand{\dT}{{\dot{T}}}
\newcommand{\tw}[1]{\widetilde{#1}}
\numberwithin{equation}{section}
\newcommand{\Conv}{\mathrm{Conv}}
\newcommand{\mo}{\mathopen}
\begin{document}

\begin{abstract}
The Gromov-Eliashberg theorem says that the group of symplectomorphisms of a
symplectic manifold is $C^0$-closed in the group of diffeomorphisms. This can
be translated into a statement about the Lagrangian submanifolds which are
graphs of symplectomorphisms.  It is also known that such Lagrangian
submanifolds are locally microsupports of sheaves. We explain how we can deduce
the Gromov-Eliashberg theorem from the involutivity theorem of Kashiwara and
Schapira which says that the microsupport of a sheaf is coisotropic.
\end{abstract}

\date{October 31, 2013}
\title{The Gromov-Eliashberg theorem by microlocal sheaf theory}
\author{St{\'e}phane Guillermou}
\renewcommand{\shorttitle}{Gromov-Eliashberg by microlocal sheaf theory}

\maketitle


\setcounter{tocdepth}{1}
\tableofcontents

\section{Introduction}

In~\cite{T08}, D.~Tamarkin gives a totally new approach for treating questions
in symplectic geometry (especially classical problems of non-displaceability).
His approach is based on the microlocal theory of sheaves, introduced and
developed in \cite{KS82,KS85,KS90}.  In particular he remarks that in some
situations it is possible to associate a sheaf with a given Lagrangian
submanifold of a cotangent bundle and then deduce properties of the Lagrangian
submanifold from this sheaf.  In this paper we follow this approach and explain
how we can recover the Gromov-Eliashberg theorem by microlocal sheaf theory,
using in particular the involutivity of the microsupport.

\smallskip

Let us briefly recall some facts of the microlocal theory of sheaves. We
consider a real manifold $M$ of class $C^\infty$ and a commutative unital ring
$\cor$ of finite global dimension. We let $\Derb(\cor_M)$ be the bounded derived
category of sheaves of $\cor$-modules on $M$.  In~\cite{KS90}, the authors
attach to an object $F$ of $\Derb(\cor_M)$ its microsupport, or singular
support, $\SSi(F)$, a subset of $T^*M$, the cotangent bundle of $M$. By
definition the microsupport is closed and conic for the action of $\R^+$ on
$T^*M$. A deep result of~\cite{KS90} says that $\SSi(F)$ is involutive (or
coisotropic). The initial motivation for this theorem comes from the theory of
systems of linear PDE's because of its link with the propagation of
singularities.  (A microdifferentiel version of the involutivity theorem is
given in~\cite{SKK} and an algebraic statement is given in~\cite{Ga81}.)

In~\cite{GKS12} the authors prove the following result, inspired by~\cite{T08}.
Let $M$ be a manifold and set $\dT^*M = T^*M \setminus M$.  For $F \in
\Derb(\cor_M)$ we also set $\dot\SSi(F) = \SSi(F) \cap \dT^*M$.  Let $I =
\mo{]}a,b[$ be an interval containing $0$ and let $\psi \cl \dT^*M \times I \to
\dT^*M$ be a homogeneous Hamiltonian isotopy. For $t\in I$ we let $\psi_t$ be
the restriction of $\psi$ at time $t$ and we denote by $\Lambda_{\psi_t} \subset
\dT^*M^2$ the graph of $\psi_t$, twisted by the antipodal map. Hence
$\Lambda_{\psi_t}$ is a conic Lagrangian submanifold.  Then the main result
of~\cite{GKS12} says that there exists $K_t\in \Der(\cor_{M^2})$, for each
$t\in I$, such that $\dot\SSi(K_t) = \Lambda_{\psi_t}$.  We can also consider
non homogeneous Hamiltonian isotopies by adding a variable: given a Hamiltonian
isotopy $\varphi$ of $T^*M$, with compact support, we can define a homogeneous
Hamiltonian isotopy $\psi$ of $\dT^*(M\times\R)$ which makes a commutative
diagram with $\varphi$ and the map
\begin{equation}\label{eq:intro-def-rho}
\rho_M \cl T^*M\times\dT^*\R \to T^*M, \qquad
(x,s;\xi,\sigma) \mapsto (x;\xi/\sigma).  
\end{equation}

The Gromov-Eliashberg theorem (Theorem~\ref{thm:GE} below) says that, if a
sequence of symplectic $C^1$ diffeomorphisms $\{\varphi_n\}_{n\in\N}$ of some
symplectic manifold $(X,\omega)$ has a $C^0$ limit, says $\varphi_\infty$, and
$\varphi_\infty$ is a $C^1$ diffeomorphism of $X$, then $\varphi_\infty$ is
symplectic.  The aim of this paper is to explain how it can be deduced from the
involutivity theorem of~\cite{KS90}.

The Gromov-Eliashberg theorem is in fact a local statement and we can assume
that $X=\R^{2n}$, that $\varphi_n$ is the time $1$ of a Hamiltonian isotopy and
that the convergence occurs on some ball $B$ of $\R^{2n}$.  We identify
$\R^{2n}$ with $T^*\R^n$ and we add a variable to make the situation
homogeneous. Then we can apply the results of~\cite{GKS12} and we deduce that
there exists $K_n \in \Derb(\cor_{\R^{2n+1}})$ such that
$\dot\SSi(K_n) \subset T^*\R^{2n}\times\dT^*\R$ and
\begin{equation}\label{eq:intro-exist-quant}
\rho_{\R^{2n}} (\dot\SSi(K_n) )  = \Lambda_{\varphi_n},
\end{equation}
where $\rho_{\R^{2n}}$ is defined in~\eqref{eq:intro-def-rho} and
$\Lambda_{\varphi_n} \subset T^*\R^{2n}$ is the twisted graph of
$\varphi_n$. We define $K$ by the distinguished triangle
$\bigoplus_{n\in\N} K_n \to \prod_{n\in\N} K_n \to K \to[+1]$.
Then $\SSi(K) \subset \bigcap_{k\in \N^*} \ol{\bigcup_{n\geq k} \SSi(K_n)}$ and
we have in particular
\begin{equation}\label{eq:incl_SS_Lambda}
\rho_{\R^{2n}} (\dot\SSi(K)) \subset \Lambda_{\varphi_\infty}.
\end{equation}
Using this inclusion we can deduce from the involutivity theorem that
$\Lambda_{\varphi_\infty}$ is coisotropic at any point $p$ which belongs to
$\rho_{\R^{2n}} (\SSi(K))$.  This means that it only remains to prove
that~\eqref{eq:incl_SS_Lambda} is in fact an equality.  We do not prove
it directly. We only prove that, for any given $p\in \Lambda_{\varphi_\infty}$, we
can modify the $K_n$'s by a so called cut-off result of~\cite{KS90} and obtain
another sheaf $K$ (depending on $p$) such that~\eqref{eq:incl_SS_Lambda}
still holds and moreover $p\in \rho_{\R^{2n}} (\SSi(K))$.
Then the involutivity theorem applies at $p$. We obtain in this way that
$\Lambda_{\varphi_\infty}$ is coisotropic at all points. Since it is a
submanifold of dimension $2n$, it is Lagrangian, which means that
$\varphi_\infty$ is a symplectic map.

\bigskip

Now we give a more precise idea of the proof.
We first state the result in the following local form.
Let $(E,\omega)$ be a symplectic vector space which we identify with $\R^{2n}$.
We endow $E$ with the Euclidean norm of $\R^{2n}$.  For $R>0$ we let $B_R^E$ be
the open ball of radius $R$ and center $0$.  For a map $\psi\cl B_R^E \to E$ we
set $\| \psi \|_{B_R^E} = \sup\{ \| \psi(x) \|;$ $x\in B_R^E \}$.

\begin{theorem}[Gromov-Eliashberg rigidity theorem, see~\cite{E87, Gr86}]\label{thm:GE}
Let $R>0$. Let $\varphi_n\cl B_R^E\to E$, $n\in \N$, and
$\varphi_\infty\cl B_R^E \to E$ be $C^1$ maps.  We assume
\begin{itemize}
\item [(i)] $\varphi_n$ is a symplectic map, that is,
  $\varphi_n^*(\omega) = \omega$, for all $n\in \N$,
\item [(ii)] $\| \varphi_n - \varphi_\infty \|_{B_R^E} \to 0 $ when
  $n\to \infty$,
\item [(iii)] $d \varphi_{\infty,x} \cl T_xE \to T_{\varphi_\infty(x)}E$ is an
  isomorphism, for all $x\in B_R^E$.
\end{itemize}
Then $\varphi_\infty|_{B_R^E}$ is a symplectic map.
\end{theorem}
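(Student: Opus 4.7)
The plan is to follow the sheaf-theoretic strategy sketched in the introduction and deduce the theorem from the involutivity of the microsupport.

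First, since the conclusion is local at each point of $B_R^E$, we may shrink $R$ slightly and assume that each $\varphi_n$ extends to the time-one map of a compactly supported Hamiltonian isotopy of $E \simeq T^*\R^n$. Adding an auxiliary variable in $\R$ to homogenize, each $\varphi_n$ lifts to a homogeneous Hamiltonian isotopy of $\dT^*(\R^n\times\R)$ compatible with $\rho_{\R^{2n}}$, and the GKS quantization theorem of~\cite{GKS12} produces $K_n \in \Derb(\cor_{\R^{2n+1}})$ satisfying~\eqref{eq:intro-exist-quant}. Define $K$ as the cone of the natural morphism $\bigoplus_{n\in\N} K_n \to \prod_{n\in\N} K_n$. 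Standard estimates for the microsupport under sums, products and cones give
\[
\SSi(K) \subset \bigcap_{k\in\N^*} \overline{\bigcup_{n\geq k}\SSi(K_n)},
\]
and combining this with the $C^0$-convergence $\varphi_n\to\varphi_\infty$ and hypothesis~(iii) yields the inclusion $\rho_{\R^{2n}}(\dot\SSi(K)) \subset \Lambda_{\varphi_\infty}$ of~\eqref{eq:incl_SS_Lambda}.

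The Kashiwara--Schapira involutivity theorem then asserts that $\SSi(K)$ is coisotropic in $T^*\R^{2n+1}$. A short symplectic-reduction calculation for the map $\rho_{\R^{2n}}$ propagates this coisotropy downstairs: at every point $p\in\rho_{\R^{2n}}(\dot\SSi(K))$, $\Lambda_{\varphi_\infty}$ is coisotropic. Since by hypothesis~(iii) $\Lambda_{\varphi_\infty}$ is a smooth submanifold of $T^*E^2$ of dimension $2n$, and $T^*E^2$ has dimension $4n$, coisotropy at $p$ forces the Lagrangian condition at $p$, i.e.\ $\varphi_\infty^*\omega=\omega$ at the corresponding point of $B_R^E$.

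The genuinely hard step, and the main obstacle, is to show that \emph{every} $p\in\Lambda_{\varphi_\infty}$ belongs to $\rho_{\R^{2n}}(\dot\SSi(K))$ for an appropriately chosen $K$. The plan is not to attack this for a single $K$, but, for each $p$ separately, to build a sheaf $K^{(p)}$ tailored to $p$: apply the microlocal cut-off lemma of~\cite{KS90} to each $K_n$ in a small open cone around a chosen lift of $p$ in $\dT^*(\R^{2n+1})$. The resulting $K_n^{(p)}$ still satisfy~\eqref{eq:intro-exist-quant} on a neighborhood of $p$ but have microsupport concentrated enough that the limit construction of the previous paragraph no longer kills $p$. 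Carefully calibrating the size of the cut-off cones against the convergence rate $\|\varphi_n-\varphi_\infty\|_{B_R^E}$ is the delicate technical point; once done, $p\in\rho_{\R^{2n}}(\dot\SSi(K^{(p)}))$ while~\eqref{eq:incl_SS_Lambda} is preserved for $K^{(p)}$. The previous paragraph then applies at the arbitrary point $p$, so $\Lambda_{\varphi_\infty}$ is Lagrangian everywhere and $\varphi_\infty|_{B_R^E}$ is symplectic.
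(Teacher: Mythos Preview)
Your outline is faithful to the introduction's sketch, and the first two paragraphs are fine: the reduction to compactly supported Hamiltonian isotopies, the quantization $K_n$, the cone construction, the microsupport bound~\eqref{eq:incl_SS_Lambda}, and the passage from coisotropy of $\SSi(K)$ to coisotropy of $\Lambda_{\varphi_\infty}$ via $\rho$ are all exactly what the paper does.

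The gap is in your treatment of the ``hard step''. You propose to apply the cut-off lemma to each $K_n$ in a cone around a lift of $p$, with the size of the cone tuned to the convergence rate, and claim this forces $p\in\rho_{\R^{2n}}(\dot\SSi(K^{(p)}))$. But cut-off only \emph{restricts} the microsupport of the individual $K_n$; it gives you no mechanism whatsoever to prevent the limit sheaf $K^{(p)}$ from being locally constant near the lift of $p$ (indeed the cone $\bigoplus\to\prod$ can easily have empty $\dot\SSi$ even when each summand does not). Nothing in your argument rules out $\dot\SSi(K^{(p)})=\emptyset$. The convergence rate plays no role here.

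What the paper actually does is quite different and is the content of Section~\ref{sec:noncstsections}. One does not try to pin down the microsupport of the limit directly. Instead one produces, from each $K_n$ via cut-off and a truncation $H^0$, a sheaf $L_n\in\Mod(\cor_{W_4})$ together with an explicit section $u_n$ over a \emph{fixed} open set $W_4$ (independent of $n$) whose restriction to a smaller fixed $W_3$ is nonzero while its restriction to $W_2$ vanishes. The existence of such sections over uniform open sets is the heart of the matter; it relies on (a)~the decomposition of simple sheaves on $\R$ (Proposition~\ref{prop:simplesheafR}), (b)~the fact that the projection $\Lambda^1_n/\R_{>0}\to B_r^{V^2}$ has degree~$1$, which forces an odd number of microsupport points over a generic vertical line and hence an ``unpaired'' interval summand $\cor_{[a,b[}$, and (c)~a uniform lower bound $b-a\geq r_1$ for that interval (Proposition~\ref{prop:camellight}), which is what allows the $W_i$ to be chosen once and for all. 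These section properties then survive in $L_\infty=\coker(\bigoplus L_n\to\prod L_n)$ by an elementary compactness argument, giving $\dot\SSi(L_\infty)\neq\emptyset$ over $W_4$. Finally one does not obtain coisotropy at $p$ itself but at some nearby point $\rho_{V^2}(q)$ with $\pi(q)\in W_4$; since $r$ (hence $W_4$) can be taken arbitrarily small, this suffices.
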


\subsection*{Main ingredients of the proof}
As seen above the essential ingredient of the proof is the involutivity theorem
of~\cite{KS90}.  The second ingredient is the main result of~\cite{GKS12} which
implies the existence of a ``quantization'' $K_n$ for $\varphi_n$, that is, an
object $K_n \in \Derb(\cor_{\R^{2n+1}})$ satisfying~\eqref{eq:intro-exist-quant}.

The third important tool is a ``cut-off'' result of~\cite{KS90}.
We use the following statement. Let $V'$ be a vector space and $V=V'\times\R$,
with coordinates $(x_1,\ldots,x_n)$.
Let $\gamma_{c_2} \subset \gamma_{c_1} \subset V$ be two closed cones of the type
$\gamma_c = \{x_n \geq c (x_1^2+\cdots +x_{n-1}^2)^{1/2}\}$, with $c_2>c_1>0$.
Let $B^V_{R}\subset V$ be the open ball of center $0$ and radius $R$.
Let $F\in \Derb(\cor_{B^V_R})$ with a microsupport contained in the union of the
polar cone $\gamma_{c_1}^{\circ a}$ and the complement of $\gamma_{c_2}^{\circ a}$:
\begin{equation}\label{eq:hypSSF_splitBIS}
\dot\SSi(F) \cap \bigr( B^V_R \times 
(\gamma_{c_2}^{\circ a} \setminus \Int(\gamma_{c_1}^{\circ a})) \bigl) = \emptyset.
\end{equation}
The cut-off lemma says that we can decompose $F$ according to this decomposition
of $\SSi(F)$.  More precisely, there exists $r$ such that $R>r>0$ and the
following holds.
For any $F\in \Derb(\cor_{B^V_R})$ satisfying~\eqref{eq:hypSSF_splitBIS}
there exists a distinguished triangle over the smaller ball $B^V_r$,
$F_1 \oplus F_2 \to F|_{B^V_r} \to L \to[+1]$,
such that $\dot\SSi(L) = \emptyset$,
$\dot\SSi(F_1) = \dot\SSi(F) \cap (B^V_r \times \gamma_{c_1}^{\circ a})$
and $\dot\SSi(F_2) \cap (B^V_r \times \gamma_{c_1}^{\circ a}) = \emptyset$.

We can use this decomposition to analyze $F$ and obtain some consequences on its
cohomology. In particular we prove the following result (see
Proposition~\ref{prop:sections-fixed-open}).  We use a notion of convex hull
$\Conv(S)$ for a subset $S\subset T^*M$ of a cotangent bundle: it is the union
of the convex hulls in each fiber, that is,
$\Conv(S) = \bigsqcup_{x\in M} \Conv(S\cap T^*_xM)$.
Given $R, c_1,c_2$ as above, there exist non empty connected open subsets
$W_1\subset\dots\subset W_4$ of $B^V_R \times \R$ such that:
if $F\in \Derb(\cor_{B^V_R})$ satisfies~\eqref{eq:hypSSF_splitBIS},
$\dot\SSi(F)$ is a Lagrangian submanifold, $F$ is simple (see
Section~\ref{sec:simplesheaves}) and the map
$$
(\dot\SSi(F) \cap (B^V_R \times \gamma_{c_1}^{\circ a}))/\R_{>0} \to B^V_R
$$
is proper of degree $1$, then there exists a sheaf $F'\in \Mod(\cor_{W_4})$
such that $\rho_{V'}(\SSi(F')) \subset \Conv(\rho_{V'}(\SSi(F)))$
and the groups $\sect(W_i;F')$, $i=1,\dots,4$, are distinct.

\subsection*{Idea of the proof}
We prove that $T_p\Lambda_{\varphi_\infty}$ is coisotropic, for any given
$p\in \Lambda_{\varphi_\infty}$. We work near $p$ and we approximate $\varphi_n$
by a globally defined Hamiltonian isotopy whose graph coincides with
$\Lambda_{\varphi_n}$ near $p$. Hence we can assume that $\varphi_n$ is the time
$1$ of a Hamiltonian isotopy.  By the main result of~\cite{GKS12} there exists
$K_n \in \Derb(\cor_{V^2 \times \R})$, for each $n$, such that
$\rho_{V^2} (\dot\SSi(K_n)) = \Lambda_{\varphi_n}$.  By the consequence of the
cut-off result explained in the previous paragraph we can find connected open
subsets $W_1 \subset\dots\subset W_4$ of $V^2 \times \R$ and
$L_n \in \Mod(\cor_{V^2\times\R})$
such that $\rho_{V^2} (\dot\SSi(L_n)) \subset \Conv(\Lambda_{\varphi_n})$ near
$p$ and the groups $\sect(W_i;L_n)$, $i=1,\dots,4$, are distinct.

We define $L\in \Mod(\cor_{V^2\times\R})$ by
$L = \coker(\bigoplus_{n\in\N} L_n \to \prod_{n\in\N} L_n)$.
Then $\rho_{V^2} (\SSi(L))$ is contained in the limit of the
$\Conv(\Lambda_{\varphi_n})$. We can assume from the beginning that
$\Lambda_{\varphi_\infty}$ is a section of the projection $T^*V^2 \to V^2$, near
$p$. Then the limit of the $\Conv(\Lambda_{\varphi_n})$ is $\Lambda_{\varphi_\infty}$.

We can see also that the groups $\sect(W_i;L)$, $i=1,\dots,4$, are distinct,
which implies that $L$ has a non trivial microsupport somewhere over
$W_4$. Hence there exists $q=(y;\eta) \in \dot\SSi(L)$ such that $y\in W_4$. By
the involutivity theorem we know that $\dot\SSi(L)$ is coisotropic at $q$. It
follows that $\Lambda_{\varphi_\infty}$ is coisotropic at $p'=\rho_{V^2}(q)$.
Now $W_4$ can be made as small as we want so that $p'$ is arbitrarily close to
$p$. It follows that $\Lambda_{\varphi_\infty}$ is coisotropic at $p$.

The proof is detailled in Section~\ref{sec:GET}. Only
Section~\ref{sec:noncstsections} contains new results. The other sections are
reminders of some notions on sheaves and results of~\cite{KS90}.  The
reader may also consult~\cite{V11} for an introduction to the use of sheaves
theory in symplectic geometry. The paper~\cite{Vic12} gives another application
of microlocal sheaf theory to the study of the $C^0$-rigidity of the Poisson
bracket.

\subsection*{Acknowledgments}
The idea of applying the involutivity theorem to the $C^0$-rigidity emerged
after several discussions with Claude Viterbo, Pierre Schapira and Vincent
Humili\`ere (in particular about the paper~\cite{HLS13}).  It is a pleasure to
thank them for their interest in this question.

\section{Microlocal theory of sheaves}
\label{section:mts}
In this section, we recall some definitions and results from~\cite{KS90},
following its notations with the exception of slight modifications. We consider
a manifold $M$ of class $C^\infty$.

\subsubsection*{Some geometrical notions  (\cite[\S 4.2,~\S 6.2]{KS90})}
For a locally closed subset $A$ of $M$, we denote by $\Int(A)$
its interior and by $\overline{A}$ its closure.

We denote by $\pi_M \cl T^*M\to M$ the cotangent bundle of $M$.  If $N\subset
M$ is a submanifold, we denote by $T^*_NM$ its conormal bundle.
We identify $M$ with $T^*_MM$, the zero-section of $T^*M$.
We set $\dT^*M = T^*M\setminus T^*_MM$ and we denote by
$\dot\pi_M\cl\dT^*M\to M$ the projection. We let $a_M\cl T^*M \to T^*M$ be the
antipodal map $(x;\xi) \mapsto (x;-\xi)$. For a subset $A$ of $T^*M$ we set
$A^a = a_M(A)$.

Let $f\cl M\to N$ be  a morphism of  real manifolds. It induces morphisms
on the cotangent bundles:
$$
T^*M \from[\; f_d\;] M\times_N T^*N \to[\;f_\pi\;] T^*N.
$$
We denote by $\Gamma_f \subset M\times N$ the graph of $f$.
If $\varphi\cl T^*X \to T^*Y$ is a map between cotangent bundles we also
consider the twisted graph
\begin{equation}\label{eq:def_tw_graph}
\Lambda_\varphi =\Gamma_{a_Y\circ \varphi}.
\end{equation}

The cotangent bundle $T^*M$ carries an exact symplectic structure.  We denote
the symplectic form by $\omega_M$.  It is given in local coordinates $(x;\xi)$
by $\omega_M = \sum_i d\xi_i \wedge dx_i$.

For a normed vector space $(E, \| . \|)$, a point $x\in E$ and $r \geq 0$ we
denote by $B_{x,r}^E$ the open ball of radius $r$ and center $0$.  If $x=0$, we
usually write $B_r^E$ for $B_{0,r}^E$.  For an open subset $U \subset E$ and a
continuous map $\psi\cl U \to E$ we set
\begin{align}
\| \psi \|_U &= \sup \{ \| \psi(x) \|;\; x\in U \}, \\
\label{eq:norm1}
\| \psi \|^1_U &= \sup \{ \| \psi(x) \|,\, \| d\psi_x(v)\|;\;
x\in U,\, \|v\| =1 \}, \text{ if $\psi$ is $C^1$.}
\end{align}

A subset of the cotangent bundle $T^*M$ is called $\R^+$-conic (or conic) if it
is invariant by the action of $(\R^+,\times)$ on $T^*M$.  We can turn non conic
subsets into conic ones by adding a variable and taking the inverse image by the
following map $\rho_M$. Let $(s;\sigma)$ be the coordinates on $T^*\R$. 
We define $\rho_M \cl T^*M\times\dT^*\R \to T^*M$ by
\begin{equation}\label{eq:def_rho}
\rho_M(x,s;\xi,\sigma) = (x;\xi/\sigma).  
\end{equation}
Finally we set $T^*_{\sigma>0}(M\times\R)
= \{ (x,s;\xi,\sigma) \in T^*(M\times\R)$; $\sigma>0\}$.

\subsubsection*{Microsupport}
In this paper the coefficient ring $\cor$ is assumed to be a field.  This makes
the description of simple sheaves easier (see Section~\ref{sec:simplesheaves}).
However the theory of microsupport works for a commutative unital ring of finite
global dimension.  We denote by $\Mod(\cor_M)$ the category of sheaves of
$\cor$-vector spaces on $M$.  We denote by $\Der(\cor_M)$ (resp.\
$\Derb(\cor_M)$) the derived category (resp.\ bounded derived category) of
$\Mod(\cor_M)$.

We recall the definition of the microsupport (or singular support) $\SSi(F)$ of
$F\in \Derb(\cor_M)$, introduced by M.~Kashiwara and P.~Schapira in~\cite{KS82}
and~\cite{KS85}.

\begin{definition}{\rm (see~\cite[Def.~5.1.2]{KS90})}
Let $F\in \Derb(\cor_M)$ and let $p\in T^*M$. 
We say that $p\notin\SSi(F)$ if there exists an open neighborhood $U$ of $p$
such that, for any $x_0\in M$ and any real $C^1$-function $\phi$ on $M$
satisfying $d\phi(x_0)\in U$ and $\phi(x_0)=0$, we have
$(\rsect_{\{x;\phi(x)\geq0\}} (F))_{x_0}\simeq0$.

We set $\dot\SSi(F) = \SSi(F) \cap \dT^*M$.
\end{definition}
In other words, $p\notin\SSi(F)$ if the sheaf $F$ has no cohomology 
supported by ``half-spaces'' whose conormals are contained in a 
neighborhood of $p$. The following properties are easy consequences of
the definition:
\begin{itemize}\renewcommand{\labelitemi}{-}
\item
$\SSi(F)$ is closed and $\R^+$-conic,
\item
$\SSi(F)\cap T^*_MM =\pi_M(\SSi(F))=\supp(F)$,
\item
the triangular inequality: if $F_1\to F_2\to F_3\to[+1]$ is a
distinguished triangle in  $\Derb(\cor_M)$, then 
$\SSi(F_2)\subset\SSi(F_1)\cup\SSi(F_3)$.
\end{itemize}

\begin{example}\label{ex:microsupport}
(i) Let $F\in \Derb(\cor_M)$. Then $\SSi(F) = \emptyset$ if and only if
$F\simeq 0$ and $\dot\SSi(F) = \emptyset$ if and only if the cohomology sheaves
$H^i(F)$ are local systems, for all $i\in \Z$.

\smallskip\noindent
(ii) If $N$ is a smooth closed submanifold of $M$ and $F=\cor_N$, then 
$\SSi(F)=T^*_NM$.

\smallskip\noindent
(iii) Let $\phi$ be $C^1$-function with $d\phi(x)\not=0$ when $\phi(x)=0$.
Let $U=\{x\in M;\phi(x)>0\}$ and let $Z=\{x\in M;\phi(x)\geq0\}$. 
Then 
\begin{align*}
\SSi(\cor_U) & =U\times_MT^*_MM
\cup \{(x;\lambda \, d\phi(x)); \; \phi(x)=0, \, \lambda\leq0\},\\
\SSi(\cor_Z) & =Z\times_MT^*_MM
\cup \{(x;\lambda \, d\phi(x));\; \phi(x)=0, \,  \lambda\geq0\}.
\end{align*}
(iv) Let $\lambda$ be a closed convex cone with vertex at $0$ in $E=\R^n$.
Then $\SSi(\cor_\lambda) \cap T^*_0E = \lambda^\circ$, the polar cone of
$\lambda$, that is,
\begin{equation}\label{eq:def_polar_cone}
\lambda^\circ = \{\xi\in E^*; \; \langle v,\xi \rangle \geq 0
 \text{ for all $v\in E\}$.}
\end{equation}
\end{example}

\subsubsection*{Functorial operations}
Let $M$ and $N$ be two manifolds. We denote by $q_i$ ($i=1,2$) the $i$-th
projection defined on $M\times N$ and by $p_i$ ($i=1,2$) the $i$-th projection
defined on $T^*(M\times N)\simeq T^*M\times T^*N$.

\begin{definition}
Let $f\cl M\to N$ be a morphism of manifolds and let $\Lambda\subset T^*N$
be a closed $\R^+$-conic subset. We say that $f$ is non-characteristic for
$\Lambda$ if $\opb{f_\pi}(\Lambda)\cap T^*_MN\subset M\times_NT^*_NN$.
\end{definition}
A morphism $f\cl M\to N$ is non-characteristic for a closed $\R^+$-conic subset
$\Lambda$ of $T^*N$ if and only if $f_d\cl M\times_NT^*N\to T^*M$ is proper on
$\opb{f_\pi}(\Lambda)$. In this case $f_d\opb{f_\pi}(\Lambda)$ is closed and
$\R^+$-conic in $T^*M$.

We denote by $\omega_M$ the dualizing complex on $M$.  Recall that $\omega_M$
is isomorphic to the orientation sheaf shifted by the dimension. We also use
the notation $\omega_{M/N}$ for the relative dualizing complex
$\omega_M\tens\opb{f}\omega_N^{\tens-1}$.  We have the duality functors
\begin{equation}\label{eq:dualfct}
\DD_M(\scbul)=\rhom(\scbul,\omega_M), \qquad
\DD'_M(\scbul)=\rhom(\scbul,\cor_M).
\end{equation}

\begin{theorem}[See {\cite[\S 5.4]{KS90}}]\label{th:opboim}
Let $f\cl M\to N$ be a morphism of manifolds, $F\in\Derb(\cor_M)$ and
$G\in\Derb(\cor_N)$.  Let $q_1\cl M\times N \to M$ and $q_2\cl M\times N \to N$
be the projections.
\begin{itemize}
\item [(i)] We have
\begin{gather*}
\SSi(\opb{q_1}F \ltens \opb{q_2}G) \subset \SSi(F)\times\SSi(G),  \\
\SSi(\rhom(\opb{q_1}F,\opb{q_2}G))  \subset \SSi(F)^a\times\SSi(G).
\end{gather*}
\item [(ii)] We assume that $f$ is proper on $\supp(F)$. Then
  $\SSi(\reim{f}F)\subset f_\pi\opb{f_d}\SSi(F)$, with equality if $f$ is a
  closed embedding.
\item [(iii)] We assume that $f$ is non-characteristic with respect to
  $\SSi(G)$. Then the natural morphism
  $\opb{f}G \tens \omega_{M/N}\to\epb{f}(G)$ is an isomorphism. Moreover
  $\SSi(\opb{f}G) \cup \SSi(\epb{f}G) \subset f_d\opb{f_\pi}\SSi(G)$.
\item [(iv)] We assume that $f$ is a submersion. Then
  $\SSi(F)\subset M\times_NT^*N$ if and only if, for any $j\in\Z$, the sheaves
  $H^j(F)$ are locally constant on the fibers of $f$.
\end{itemize}
\end{theorem}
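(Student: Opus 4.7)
The plan is to reduce each statement to the defining vanishing property of $\SSi$, applied to carefully chosen test functions on the source or target manifold, together with classical base change and Künneth-type isomorphisms.

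For part (i), fix a point $(x_0,y_0;\xi_0,\eta_0)$ not in $\SSi(F)\times\SSi(G)$ in $T^*M\times T^*N$; by symmetry I may assume $(x_0;\xi_0)\notin\SSi(F)$. I would show the external tensor product has vanishing $\rsect_{\{\phi\geq 0\}}$-stalk for any $C^1$-function $\phi$ on $M\times N$ with $\phi(x_0,y_0)=0$ and $d\phi(x_0,y_0)=(\xi_0,\eta_0)$. After a local diffeomorphism (straightening the second factor) I can reduce to the case $\phi(x,y)=\phi_1(x)+\phi_2(y)$ with $d\phi_1(x_0)=\xi_0$, $d\phi_2(y_0)=\eta_0$; the Künneth formula then writes the local cohomology as a tensor product of $\rsect_{\{\phi_1\geq 0\}}(F)_{x_0}$ with a factor coming from $G$, and the first vanishes by hypothesis. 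The $\rhom$-statement is obtained by the same argument, using the Verdier-type identity $\rhom(\opb{q_1}F,\opb{q_2}G)\simeq\opb{q_1}\DD'_MF\ltens\opb{q_2}G$ locally (for bounded coefficients it is enough to check microsupport, and $\SSi(\DD'_MF)=\SSi(F)^a$ by a direct verification using $\phi\mapsto-\phi$).

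For part (ii), fix $(y_0;\eta_0)\notin f_\pi\opb{f_d}\SSi(F)$; I need to show that for $\psi$ on $N$ with $\psi(y_0)=0$ and $d\psi(y_0)=\eta_0$ close to $\eta_0$, the complex $\rsect_{\{\psi\geq0\}}(\reim{f}F)_{y_0}$ vanishes. By proper base change (using properness of $f$ on $\supp F$), this stalk is computed as $\rsect_c(f^{-1}(y_0)\cap\{\psi\circ f\geq 0\};F)$, which sits in a long exact sequence whose other terms are stalks of the type $\rsect_{\{\psi\circ f\geq 0\}}(F)_x$ for $x\in f^{-1}(y_0)$. The non-characteristic hypothesis is exactly what ensures that at every such $x$, $d(\psi\circ f)(x)=\opb{f_d}(d\psi(y_0))$ lies outside $\SSi(F)$, so each of these stalks vanishes and the result follows. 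Equality in the closed embedding case comes from the fact that $f_\pi f_d^{-1}$ is then an injective push-forward and one can construct test sections realizing the microsupport.

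For part (iii), the first assertion that $\opb{f}G\tens\omega_{M/N}\to\epb{f}G$ is an isomorphism is a local problem: after factoring $f$ as a closed embedding followed by a submersion, the submersion case is standard and the embedding case follows from the non-characteristic hypothesis by choosing tubular coordinates and a local resolution of $G$ by sheaves with controlled microsupport. For the microsupport inclusion, let $\phi$ on $M$ be a test function at $x_0$ with $d\phi(x_0)=\xi_0\notin f_d\opb{f_\pi}\SSi(G)$; I would extend $\phi$ via the non-characteristic condition to a function on $N$ realizing a comparable half-space and then deduce vanishing of the stalks of $\rsect_{\{\phi\geq0\}}\opb{f}G$ from those of $G$. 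Part (iv) is proved in two directions. If the cohomology sheaves are locally constant on the fibers of the submersion $f$, then for every $\phi$ whose differential is vertical (not in $M\times_NT^*N$), the local cohomology $\rsect_{\{\phi\geq0\}}(H^j F)_{x_0}$ vanishes because one can flow along the zero set of $\phi$ inside a fiber. Conversely, if $\SSi(F)\subset M\times_NT^*N$, apply the propagation theorem (\cite{KS90}, following from the definition of $\SSi$) along a trivializing chart of $f$: vanishing of microsupport transverse to fibers forces $F|_\text{fiber chart}$ to be a pullback from the base up to cohomology, i.e.\ the $H^j(F)$ are locally constant along fibers.

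The main obstacle across all four parts is part (iii): the isomorphism $\opb{f}G\tens\omega_{M/N}\simeq\epb{f}G$ under the non-characteristic hypothesis is genuinely delicate, because $\epb{f}$ is defined through abstract adjunction and one must control its microlocal behaviour intrinsically. Handling this cleanly requires either the microlocal Morse lemma for $\rhom$ or, as in \cite{KS90}, reduction to the case of a closed embedding where the local structure of $G$ transverse to $M$ can be unraveled using the non-characteristic condition on conormals.
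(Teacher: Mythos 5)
This theorem is not proved in the paper at all: it is quoted from \cite[\S 5.4]{KS90}, and the proofs there rest on the microlocal cut-off machinery of \S 5.2 of that book together with the non-characteristic deformation lemma, so your sketch has to be measured against those arguments. Your outline of (ii) is essentially the standard one (use $\roim{f}\rhom(\opb{f}\cor_Z,F)\simeq\rhom(\cor_Z,\roim{f}F)$, properness on $\supp F$ to identify $\reim{f}$ with $\roim{f}$ and to see that $f_\pi\opb{f_d}\SSi(F)$ is closed, then proper base change over the fibre) and can be completed; note only that the hypothesis you invoke there is not ``non-characteristicity'' but $(y_0;\eta_0)\notin f_\pi\opb{f_d}\SSi(F)$. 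The genuine gap is in (i). A general test function $\phi$ on $M\times N$ cannot be brought to the split form $\phi_1(x)+\phi_2(y)$ by a diffeomorphism compatible with the product structure, and, more importantly, even for a split $\phi$ the set $\{\phi_1(x)+\phi_2(y)\geq 0\}$ is not a product, so there is no K{\"u}nneth factorization of $(\rsect_{\{\phi\geq0\}}(\opb{q_1}F\ltens\opb{q_2}G))_{(x_0,y_0)}$ into $(\rsect_{\{\phi_1\geq0\}}F)_{x_0}$ tensored with ``a factor coming from $G$''. Pointwise vanishing in the $x$-direction for each fixed $y$ does not pass to the stalk on the product without a uniformity argument in the parameter $y$; supplying that uniformity is exactly what the cut-off results (Prop.~5.2.1 of \cite{KS90}) are for, and it is the whole content of their Prop.~5.4.1. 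Your reduction of the $\rhom$ bound via $\rhom(\opb{q_1}F,\opb{q_2}G)\simeq\opb{q_1}\DD'_MF\ltens\opb{q_2}G$ and $\SSi(\DD'_MF)=\SSi(F)^a$ is also not available: the first isomorphism fails for general objects of $\Derb(\cor_M)$ (it already fails over a point for infinite-dimensional stalks), and the second is not a ``direct verification'' and in this generality one does not even have it without constructibility-type hypotheses; in \cite{KS90} the $\rhom$ estimate is proved directly, in parallel with the external tensor estimate.

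For (iii) you do not in fact give an argument. Extending a test function $\phi$ from $M$ to $N$ does not control $(\rsect_{\{\phi\geq0\}}\opb{f}G)_{x_0}$ by local cohomology of $G$ along half-spaces of $N$: for a closed embedding the restriction $G|_M$ can have local cohomology invisible from half-spaces upstairs, and ruling this out is precisely where the non-characteristic hypothesis must be used, through a propagation/cut-off argument; the isomorphism $\opb{f}G\tens\omega_{M/N}\isoto\epb{f}G$ requires the same input, as your final paragraph itself concedes. In (iv), ``$d\phi\notin M\times_NT^*N$'' means the covector does not annihilate the fibre directions (not that it is vertical), and the direction ``locally constant on fibres $\Rightarrow$ microsupport bound'' still needs a d{\'e}vissage to the cohomology sheaves plus an actual propagation argument along the fibres, while the converse is the standard argument you indicate. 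So, as it stands, the proposal essentially recovers (ii), but (i) and (iii) --- the parts carrying the real technical weight of \cite[\S 5.4]{KS90} --- are not proved.
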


\begin{corollary}\label{cor:opboim}
Let $F,G\in\Derb(\cor_M)$. 
\begin{itemize}
\item [(i)] We assume that $\SSi(F)\cap\SSi(G)^a\subset T^*_MM$. Then
  $\SSi(F\ltens G)\subset \SSi(F)+\SSi(G)$.
\item [(ii)] We assume that $\SSi(F)\cap\SSi(G)\subset T^*_MM$. Then \\
  $\SSi(\rhom(F,G))\subset \SSi(F)^a+\SSi(G)$.
\end{itemize}
\end{corollary}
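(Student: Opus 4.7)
The plan is to derive both assertions from Theorem~\ref{th:opboim} applied to the diagonal embedding $\delta\cl M \hookrightarrow M\times M$, with the two projections $q_1, q_2$ as in the statement of that theorem. I would begin with the two identifications
\[
F \ltens G \simeq \opb{\delta}(\opb{q_1}F \ltens \opb{q_2}G),
\qquad
\rhom(F,G) \simeq \opb{\delta}\rhom(\opb{q_1}F, \opb{q_2}G),
\]
the second coming from the fact that $\opb{\delta}$ commutes with $\rhom$ for a closed embedding; alternatively, one may route through $\epb{\delta}$ and absorb the twist $\omega_{M/M\times M}$, a shifted orientation local system that does not affect $\SSi$, via the non-characteristic isomorphism of Theorem~\ref{th:opboim}(iii). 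By Theorem~\ref{th:opboim}(i), the microsupports of the two external objects are bounded by $\SSi(F)\times\SSi(G)$ and $\SSi(F)^a\times\SSi(G)$ respectively.

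Next I would unpack the geometry of $\delta$. Under the canonical identification $M\times_{M\times M}T^*(M\times M)\simeq T^*M\oplus T^*M$, the map $\delta_d$ is the addition $(x;\xi_1,\xi_2)\mapsto (x;\xi_1+\xi_2)$, and the conormal $T^*_M(M\times M)$ is the antidiagonal $\{(x;\xi,-\xi)\}$. Thus ``non-characteristic for $\SSi(F)\times\SSi(G)$'' reads: whenever $(x;\xi)\in\SSi(F)$ and $(x;-\xi)\in\SSi(G)$, then $\xi=0$; this is precisely the hypothesis $\SSi(F)\cap\SSi(G)^a\subset T^*_MM$ of (i). A parallel computation, in which $\SSi(F)$ is replaced by $\SSi(F)^a$, shows that the hypothesis of (ii) is exactly the non-characteristic condition for the closed conic set $\SSi(F)^a\times\SSi(G)$.

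At that point Theorem~\ref{th:opboim}(iii) yields the inclusion
$\SSi(\opb{\delta}(-)) \subset \delta_d\opb{\delta_\pi}(-)$,
and since $\delta_d$ sums the two covectors, the right-hand side computes to $\SSi(F)+\SSi(G)$ in case (i) and $\SSi(F)^a+\SSi(G)$ in case (ii), which is the desired bound. The one place I expect to need care is the derived-category identification in the $\rhom$ case; once that is granted, the remainder is a direct geometric calculation of $\delta_d\opb{\delta_\pi}$ together with the non-characteristic verification, both of which reduce to the elementary observation that the conormal to the diagonal is the antidiagonal in $T^*M\oplus T^*M$.
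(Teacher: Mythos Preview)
Your argument is the standard one and is exactly what the paper intends: the corollary is stated without proof, immediately after Theorem~\ref{th:opboim}, precisely because it follows from that theorem via the diagonal embedding as you describe. One small caution: the assertion that $\opb{\delta}$ commutes with $\rhom$ for a closed embedding is not true in general, but your alternative route through $\epb{\delta}$ and the twist by $\omega_{M/M\times M}$ (using $\epb{\delta}\rhom(A,B)\simeq\rhom(\opb{\delta}A,\epb{\delta}B)$ and then Theorem~\ref{th:opboim}(iii)) is correct and is the right way to handle part~(ii).
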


\begin{corollary}\label{cor:opbeqv}
Let $I$ be a contractible manifold and let $p\cl M\times I\to M$ be the
projection.  If $F\in\Derb(\cor_{M\times I})$ satisfies
$\SSi(F)\subset T^*M\times T^*_II$, then $F\simeq\opb{p}\roim{p}F$.
\end{corollary}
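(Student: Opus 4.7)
The plan is to show the counit $\alpha\colon\opb{p}\roim{p}F\to F$ of the adjunction $(\opb{p},\roim{p})$ is an isomorphism, equivalently that its cone $C$ vanishes.

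\textbf{Step 1.} Since $p$ is a submersion, it is non-characteristic for every closed conic subset of $T^*M$, and Theorem~\ref{th:opboim}(iii) yields $\SSi(\opb{p}\roim{p}F)\subset p_d\opb{p_\pi}\SSi(\roim{p}F)\subset T^*M\times T^*_II$. Combined with the hypothesis and the triangle inequality for microsupport, $\SSi(C)\subset T^*M\times T^*_II$.

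\textbf{Step 2.} I claim the unit $\eta_G\colon G\to\roim{p}\opb{p}G$ is an isomorphism for every $G\in\Der(\cor_M)$. Indeed, on a contractible open neighborhood $U$ of $x\in M$ we have $\opb{p}G|_{U\times I}\simeq G|_U\boxtimes\cor_I$, so the K\"unneth formula over the field $\cor$, combined with $\rsect(I,\cor_I)\simeq\cor$ (contractibility of $I$), gives $\rsect(U\times I,\opb{p}G)\simeq\rsect(U,G)$, whence $(\roim{p}\opb{p}G)_x\simeq G_x$. Applying this with $G=\roim{p}F$, the triangle identity $\roim{p}(\alpha)\circ\eta_{\roim{p}F}=\id$ forces $\roim{p}(\alpha)$ to be an isomorphism, so $\roim{p}C\simeq 0$.

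\textbf{Step 3.} It remains to deduce $C\simeq 0$. Fix $(x_0,t_0)\in M\times I$ and apply the non-characteristic deformation lemma (Proposition~2.7.2 of~\cite{KS90}) to a family of opens $U\times V_s\subset M\times I$ with $U\ni x_0$ fixed and $V_s$ expanding from a small open $V\ni t_0$ to $V_1=I$. The moving boundary $U\times\partial V_s$ has conormals in the pure $T^*I$-direction, which meet $\SSi(C)\subset T^*M\times T^*_II$ only on the zero section. The lemma gives $\rsect(U\times I,C)\isoto\rsect(U\times V,C)$, and passing to the limit as $U\downarrow x_0$ and $V\downarrow t_0$ identifies $(\roim{p}C)_{x_0}$ with the stalk $C_{(x_0,t_0)}$. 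By Step~2 this stalk vanishes; as $(x_0,t_0)$ was arbitrary, $C\simeq 0$ and $\alpha$ is an isomorphism.

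The main obstacle is the invocation of non-characteristic deformation in Step~3: it is not isolated in the excerpt, and one must verify that the vertical moving boundary is non-characteristic for $C$ (immediate from Step~1) in order to propagate $\roim{p}C=0$ down to pointwise vanishing of $C$. Everything before that is a formal consequence of the microsupport estimates in Theorem~\ref{th:opboim} combined with K\"unneth over a field and the contractibility of $I$.
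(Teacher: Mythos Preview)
Your argument is correct. The paper states this corollary without proof (it is listed immediately after Theorem~\ref{th:opboim} as a formal consequence), so there is no detailed argument to compare against; what you have written is essentially the standard proof one finds in~\cite{KS90} (cf.\ Prop.~2.7.8 there).

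Two minor remarks. First, in Step~2 you can bypass K\"unneth entirely: the projection formula gives $\roim{p}\opb{p}G \simeq G \otimes \roim{p}\cor_{M\times I}$, and $\roim{p}\cor_{M\times I}\simeq\cor_M$ because $I$ is contractible; this works without assuming $\cor$ is a field. Second, your Step~3 is really the substantive point, and it cannot be replaced by a simple appeal to Theorem~\ref{th:opboim}(iv): that theorem only tells you $H^j(C)$ is locally constant on each fiber, and promoting ``$\roim{p}C\simeq 0$'' to ``$C\simeq 0$'' still requires the propagation argument you give (or an equivalent base-change statement). For the non-characteristic deformation to apply you need the closures $\overline{V_s\setminus V_t}$ to be compact; this is automatic once you take $U$ relatively compact and choose the $V_s$ to be an exhaustion of $I$ by relatively compact opens, which any manifold admits. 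You might make that choice explicit.
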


The next result follows immediately from Theorem~\ref{th:opboim}~(ii) and
Example~\ref{ex:microsupport}~(i). It is a particular case of the microlocal
Morse lemma (see~\cite[Cor.~5.4.19]{KS90}), the classical theory corresponding
to the constant sheaf $F=\cor_M$.
\begin{corollary}\label{cor:Morse}
Let $F\in\Derb(\cor_M)$, let $\phi\cl M\to\R$ be a function of class $C^1$ and
assume that $\phi$ is proper on $\supp(F)$.  Let $a<b$ in $\R$ and assume that
$d\phi(x)\notin\SSi(F)$ for $a\leq \phi(x)<b$. Then the natural morphisms
$\rsect(\opb{\phi}(]-\infty,b[);F) \to \rsect(\opb{\phi}(]-\infty,a[);F)$
and
$\rsect_{\opb{\phi}([b,+\infty[)}(M;F) \to \rsect_{\opb{\phi}([a,+\infty[)}(M;F)$
are isomorphisms.
\end{corollary}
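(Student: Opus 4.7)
The plan is to push the problem forward along $\phi$, reducing both assertions to a single vanishing statement on $\R$ that follows from the definition of microsupport by propagation.

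Set $G \eqdot \reim\phi F$. Since $\phi$ is proper on $\supp F$, we have $G \simeq \roim\phi F$, and proper base change yields canonical isomorphisms $\rsect(\opb\phi(U); F) \simeq \rsect(U; G)$ for any open $U \subset \R$ and $\rsect_{\opb\phi(Z)}(M; F) \simeq \rsect_Z(\R; G)$ for any closed $Z \subset \R$. By Theorem~\ref{th:opboim}(ii), $\SSi(G) \subset \phi_\pi \opb{\phi_d}\SSi(F)$. A covector $(s;\tau) \in \dT^*\R$ with $\tau > 0$ lies in the right-hand side only if some $x \in \opb\phi(s)$ satisfies $\tau\,d\phi(x) \in \SSi(F)$; by the $\R_{>0}$-conicity of $\SSi(F)$ this is equivalent to $d\phi(x) \in \SSi(F)$. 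The hypothesis therefore yields $\SSi(G) \cap ([a,b[ \times \rspos) = \emptyset$.

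Next, both isomorphism statements reduce to a single vanishing. Applying $\RHom(-,G)$ to the short exact sequences $0 \to \cor_{]-\infty,a[} \to \cor_{]-\infty,b[} \to \cor_{[a,b[} \to 0$ and $0 \to \cor_{[a,b[} \to \cor_{[a,+\infty[} \to \cor_{[b,+\infty[} \to 0$ of sheaves on $\R$ produces distinguished triangles whose cones are in both cases $\rsect_{[a,b[}(\R;G)$. So it suffices to show $\rsect_{[a,b[}(\R;G) = 0$. For each $s_0 \in [a,b[$, the fact that $(s_0;1) \notin \SSi(G)$ together with the definition of microsupport, applied with the test function $\psi(s) = s - s_0$, yields an open neighborhood $V_{s_0}$ of $s_0$ on which the stalk $(\rsect_{[s,+\infty[} G)_s$ vanishes for every $s \in V_{s_0}$. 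Using $\cor_{[a,b[} = \varinjlim_{s \nearrow b} \cor_{[a,s[}$ together with the triangles $\rsect_{[a,s[}(\R;G) \to \rsect_{[a,s'[}(\R;G) \to \rsect_{[s,s'[}(\R;G) \to[+1]$ for $s < s'$ in $[a,b]$, the set $T \eqdot \{s \in [a,b] : \rsect_{[a,s[}(\R;G) = 0\}$ is seen to contain $a$, to be closed under increasing limits (since $\RHom$ sends filtered colimits to limits), and to be open to the right (from the local stalk vanishing just established); a standard connectedness argument then gives $b \in T$, hence the desired vanishing.

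I expect the main obstacle to be the final step, namely the passage from the stalkwise information provided by the definition of microsupport to the global vanishing of $\rsect_{[a,b[}(\R;G)$ — specifically, showing that $T$ is open to the right. The $\R_{>0}$-conicity of $\SSi(F)$ is crucial here: it selects the test covector $(s_0;+1)$ rather than $(s_0;-1)$, which is what forces propagation in the correct direction (leftward from $b$ toward $a$). The other ingredients — the reduction by pushforward and the equivalence of both isomorphisms to the single vanishing — are essentially formal manipulations with Theorem~\ref{th:opboim}(ii) and Example~\ref{ex:microsupport}(i).
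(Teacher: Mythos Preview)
Your reduction is correct and standard: push forward along $\phi$ (proper on $\supp F$), bound $\SSi(\roim\phi F)$ via Theorem~\ref{th:opboim}(ii), and reduce both assertions to the single vanishing $\rsect_{[a,b[}(\R;G)=0$. The paper itself gives no argument beyond citing Theorem~\ref{th:opboim}(ii), Example~\ref{ex:microsupport}(i), and \cite[Cor.~5.4.19]{KS90}, so your write-up is already more detailed than the paper's.

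There is, however, a genuine gap at exactly the point you flag. From $(s_0;1)\notin\SSi(G)$ the definition of the microsupport yields only the \emph{stalk} vanishing $(\rsect_{[s_0,+\infty[}G)_{s_0}=0$, i.e.\ the vanishing of the filtered colimit $\varinjlim_{\varepsilon\to 0}\rsect_{[s_0,s_0+\varepsilon[}(\R;G)$. This does \emph{not} imply that any individual term $\rsect_{[s_0,s'[}(\R;G)$ vanishes, which is what ``$T$ is open to the right'' requires: in a directed system with zero colimit every class eventually dies, but the terms themselves may all be nonzero. Your connectedness argument therefore does not close; one cannot pass from ``$s^*=\sup T\in T$'' to ``$s^*+\eta\in T$'' using only the stalk information.

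The missing ingredient is precisely the non-characteristic deformation lemma \cite[Prop.~2.7.2]{KS90} (from which \cite[Cor.~5.4.19]{KS90} is deduced). Its proof combines the local stalk vanishing you obtained with a Mittag--Leffler argument on the inverse system $\{\rsect(\phi^{-1}(]-\infty,t[);F)\}_t$: one shows, degree by degree and using the boundedness of $G$, that the transition maps are eventually isomorphisms, which upgrades the colimit vanishing to the needed termwise statement. So your outline is the right one, but the sentence ``open to the right (from the local stalk vanishing just established)'' hides the only nontrivial analytic step in the whole proof; you should either invoke \cite[Prop.~2.7.2]{KS90} explicitly or reproduce its Mittag--Leffler argument.
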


\section{Simple sheaves on $\R$}
\label{sec:simplesheaves}

Let $\Lambda\subset\dT^*M$ be a locally closed conic Lagrangian submanifold and
let $p\in\Lambda$.  Simple sheaves along $\Lambda$ at $p$ are defined
in~\cite[Def.~7.5.4]{KS90}. Here we only recall a characterization and some
properties of simple sheaves.  For $p\in T^*M$ we denote by $\Derb(\cor_M;p)$
the quotient of $\Derb(\cor_M)$ by the full triangulated subcategory formed by
the $F$ such that $p\not\in \SSi(F)$.

When $\Lambda$ is the conormal bundle to a submanifold $N\subset M$, that is,
when the projection $\pi_M|_\Lambda \cl \Lambda \to M$ has constant rank, then
an object $F\in\Derb(\cor_M)$ is simple along $\Lambda$ at $p$ if
$F\simeq\cor_N\,[d]$ in $\Derb(\cor_M;p)$ for some shift $d\in\Z$.
This means that there exist distinguished triangles
$F' \to F \to L_1 \to[+1]$ and $F' \to \cor_N \to L_2 \to[+1]$
where $p\not\in \SSi(L_i)$, $i=1,2$.

If $\SSi(F)$ is contained in $\Lambda$ on a neighborhood of $\Lambda$,
$\Lambda$ is connected and $F$ is simple at some point of $\Lambda$,
then $F$ is simple at every point of $\Lambda$.

Now we will describe the structure of the simple sheaves on $\R$ with
microsupport contained in the positive direction. We let $(s;\sigma)$ be the
coordinates on $T^*\R$ and we let $T^*_{\sigma>0}\R$ be the subset of $T^*\R$
defined by $\sigma>0$. We let $I= {}]a,b[$ be an interval ($a$ and $b$ may be
$\pm\infty$).  We recall that $\cor$ is a field.

\begin{lemma}\label{lem:deux_decompositions}
Let $\alpha, \beta \in I$ with $\alpha < \beta$.  Let $F,G,H,L \in
\Derb(\cor_I)$. We assume that $\dot\SSi(L) = \emptyset$ and that we have a
distinguished triangle
\begin{align}
\label{eq:td_deux_dec1}
&  F\oplus G \to[u] H \oplus \cor_{[\alpha,\beta[} \to[v] L \to[+1] , \\
\label{eq:td_deux_dec2}
\text{or}\quad & H \oplus \cor_{[\alpha,\beta[} \to F\oplus G \to L \to[+1] .
\end{align}
Then we have a decomposition $F \simeq H_1 \oplus \cor_{[\alpha,\beta[}$ or
$G \simeq H_1 \oplus \cor_{[\alpha,\beta[}$ for some $H_1 \in \Derb(\cor_I)$.
\end{lemma}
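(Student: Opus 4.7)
The strategy is to exploit $\Ext$-vanishings between $\cor_{[\alpha,\beta[}$ and the constant sheaf $\cor_I$ in both directions, in order to convert the given distinguished triangle into a concrete splitting.

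First, since $\dot\SSi(L)=\emptyset$ and $I$ is contractible, the cohomology sheaves $H^j(L)$ are constant on $I$; because $\cor$ is a field, $L$ is non-canonically isomorphic in $\Derb(\cor_I)$ to a direct sum of shifts of $\cor_I$. Next, compute
\[
\RHom(\cor_I,\cor_{[\alpha,\beta[}) \;=\; \rsect(I;\cor_{[\alpha,\beta[}) \;=\; 0
\]
using the short exact sequence $0\to\cor_{[\beta,\infty[\cap I}\to\cor_{[\alpha,\infty[\cap I}\to\cor_{[\alpha,\beta[}\to 0$: both of the other terms have global sections equal to $\cor$ concentrated in degree $0$, with isomorphic restriction map by contractibility. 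Dually, $\rhom(\cor_{[\alpha,\beta[},\cor_I)\simeq\cor_{]\alpha,\beta]}$, whose sections on $I$ also vanish by an analogous argument using $0\to\cor_{]\alpha,\beta]}\to\cor_{[\alpha,\beta]}\to\cor_{\{\alpha\}}\to 0$, so $\RHom(\cor_{[\alpha,\beta[},\cor_I)=0$ as well. Applied to $L$,
\[
\RHom(L,\cor_{[\alpha,\beta[}) \;=\; 0 \;=\; \RHom(\cor_{[\alpha,\beta[},L).
\]

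Second, apply $\Hom(\cor_{[\alpha,\beta[},-)$ and $\Hom(-,\cor_{[\alpha,\beta[})$ to the given distinguished triangle. The four vanishings above force the end terms of the resulting long exact sequences to vanish, so the middle map $u$ of the triangle induces isomorphisms between the corresponding Hom-groups of $F\oplus G$ and $H\oplus\cor_{[\alpha,\beta[}$ with $\cor_{[\alpha,\beta[}$. In case~\eqref{eq:td_deux_dec1} I lift the inclusion $\iota_2\cl\cor_{[\alpha,\beta[}\hookrightarrow H\oplus\cor_{[\alpha,\beta[}$ uniquely through $u$ to $\phi=(\phi_1,\phi_2)^T\cl\cor_{[\alpha,\beta[}\to F\oplus G$, and set $\psi=p_2\circ u=(u_{21},u_{22})\cl F\oplus G\to\cor_{[\alpha,\beta[}$; then $\psi\phi=p_2\iota_2=\id$. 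In case~\eqref{eq:td_deux_dec2} the roles are reversed: lift the projection $p_2\cl H\oplus\cor_{[\alpha,\beta[}\to\cor_{[\alpha,\beta[}$ through $u$ to $\psi\cl F\oplus G\to\cor_{[\alpha,\beta[}$ and set $\phi=u\circ\iota_2$; again $\psi\phi=\id$.

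Finally, write $\psi=(\psi_1,\psi_2)$ and $\phi=(\phi_1,\phi_2)^T$. The relation $\psi_1\phi_1+\psi_2\phi_2=\id$ lives in $\End(\cor_{[\alpha,\beta[})=\cor$, a field, so one summand $\psi_j\phi_j=\lambda\in\cor$ must be nonzero. Then $\phi_j$ and $\lambda^{-1}\psi_j$ exhibit $\cor_{[\alpha,\beta[}$ as a direct summand of $F$ (if $j=1$) or of $G$ (if $j=2$), since $\Derb(\cor_I)$ is Karoubian and the associated idempotent $\phi_j\circ\lambda^{-1}\psi_j$ splits. This yields the desired decomposition $F\simeq H_1\oplus\cor_{[\alpha,\beta[}$ or $G\simeq H_1\oplus\cor_{[\alpha,\beta[}$. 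The main obstacle is establishing the two $\RHom$ vanishings of the opening step; once they are in hand, everything else is elementary linear algebra over the field $\cor$.
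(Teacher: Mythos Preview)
Your argument is correct and follows essentially the same route as the paper: show $\Hom(\cor_{[\alpha,\beta[},L)=0$, lift the inclusion $\iota_2$ through $u$ to a map $\phi\cl\cor_{[\alpha,\beta[}\to F\oplus G$, observe that $p_2\circ u\circ\phi=\id$, and split this identity over the field $\End(\cor_{[\alpha,\beta[})=\cor$ to get a retract in $F$ or $G$. One small slip: your exact sequence for computing $\rsect(I;\cor_{[\alpha,\beta[})$ is written in the wrong order (since $[\alpha,\beta[$ is open in $[\alpha,b[$, it should read $0\to\cor_{[\alpha,\beta[}\to\cor_{[\alpha,b[}\to\cor_{[\beta,b[}\to 0$), but the conclusion is unaffected.
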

\begin{proof}
We give the proof when we have the distinguished
triangle~\eqref{eq:td_deux_dec1}. The case~\eqref{eq:td_deux_dec2} is
similar.
Let $i \cl \cor_{[\alpha,\beta[} \to H \oplus \cor_{[\alpha,\beta[}$ and
$p \cl H \oplus \cor_{[\alpha,\beta[} \to \cor_{[\alpha,\beta[}$ be the natural
morphisms. Let $p_F , p_G \cl F\oplus G \to F\oplus G$ be the
projections to the factors $F$ and $G$ respectively.

Since $L$ has constant cohomology sheaves, we have
$\Hom(\cor_{[\alpha,\beta[} , L) =0$. Hence $v\circ i =0$ and $i$ factorizes
through a morphism $j\cl \cor_{[\alpha,\beta[} \to F\oplus G$. We set
$u_F = p \circ u \circ p_F \circ j$ and $u_G = p \circ u \circ p_G \circ j$.
Then $u_F + u_G = \id_{\cor_{[\alpha,\beta[}}$. Since 
$\Hom(\cor_{[\alpha,\beta[} ,\cor_{[\alpha,\beta[} ) = \cor $,
we deduce that a multiple of $u_F$ or $u_G$ must be $\id_{\cor_{[\alpha,\beta[}}$.
It follows that $\cor_{[\alpha,\beta[}$ is a direct summand of $F$ or $G$.
\end{proof}

\begin{lemma}\label{lem:simplesheaflocal}
We assume that $0\in I$ and
we set $\Lambda = T^*_0\R \cap T^*_{\sigma>0}\R$.
Let $F\in \Derb(\cor_I)$ be such that $\dot\SSi(F) = \Lambda$
and $F$ is simple along $\Lambda$.
Then there exists $M\in \Derb(\cor)$ and $d\in\Z$ such that
$F\simeq \cor_{]a,0[}[d] \oplus M_I$ or $F\simeq \cor_{[0,b[}[d] \oplus M_I$.
\end{lemma}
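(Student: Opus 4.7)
First, I would establish the structure of $F$ on $I \setminus \{0\}$. Since $\dot\SSi(F) = \Lambda$ is concentrated over $0$, on $U = ]a, 0[$ and $V = ]0, b[$ the restriction $F|_{U \sqcup V}$ has empty $\dot\SSi$; by Example~\ref{ex:microsupport}(i) its cohomology sheaves are local systems, hence constant (as $U, V$ are contractible), yielding $F|_U \simeq (M_1)_U$ and $F|_V \simeq (M_2)_V$ for some $M_1, M_2 \in \Derb(\cor)$. The one-sided hypothesis $\SSi(F) \cap \{\sigma < 0\} = \emptyset$ means $(0;-1) \notin \SSi(F)$; taking $\phi(x) = -x$ in the definition of microsupport gives $(\rsect_{\{x \leq 0\}}(F))_0 \simeq 0$. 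Combining this with the triangle $\rsect_{\{x \leq 0\}}(F) \to F \to \RR j_* (F|_V) \to[+1]$ for $j\colon V \hookrightarrow I$ and taking stalks at $0$, the restriction map $\alpha\colon F_0 \to M_2$ is an isomorphism. Symmetrically, $(0;1) \in \SSi(F)$ forces the restriction $\beta\colon F_0 \to M_1$ to fail to be an isomorphism.

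Next, I would reconstruct $F$ as a pushout and split it. Since $F$ is constructible for the stratification $\{\{0\},U,V\}$, it is determined up to isomorphism by $M_1$, $M_2$ and $\gamma := \beta \circ \alpha^{-1}\colon M_2 \to M_1$; concretely, $F$ fits into a distinguished triangle
$$(M_2)_U \xrightarrow{(\iota,\,-\gamma)} (M_2)_I \oplus (M_1)_U \to F \to[+1]$$
where $\iota$ is the canonical extension by zero. Because $\cor$ is a field, $\Derb(\cor)$ is semisimple, so $\gamma$ splits as $\gamma = \id_A \oplus 0$ under some decompositions $M_2 \simeq A \oplus B$ and $M_1 \simeq A \oplus C$. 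The cone of $(\iota,-\gamma)$ preserves direct sums, and the three resulting pieces compute to $A_I$, $B_{[0,b[}$ and $C_{]a,0[}$ respectively, yielding
$$F \simeq A_I \oplus B_{[0,b[} \oplus C_{]a, 0[}.$$

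Finally, simplicity at $p = (0;1) \in \Lambda$ would pin down $B$ and $C$. In the localized category $\Derb(\cor_I; p)$, the summand $A_I$ vanishes (microsupport in the zero section). The short exact sequences $0 \to \cor_{]0,b[} \to \cor_{[0,b[} \to \cor_{\{0\}} \to 0$ and $0 \to \cor_{]a,0[} \to \cor_I \to \cor_{[0,b[} \to 0$ provide isomorphisms $\cor_{[0,b[} \simeq \cor_{\{0\}} \simeq \cor_{]a,0[}[1]$ in $\Derb(\cor_I; p)$, since $\cor_{]0,b[}$ and $\cor_I$ have $p \notin \SSi$. Thus $F \simeq B \otimes \cor_{\{0\}} \oplus C \otimes \cor_{\{0\}}[-1]$ in the localized category, while by simplicity $F \simeq \cor_{\{0\}}[d]$ there. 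Applying the functor $L \mapsto (\rsect_{\{x\geq 0\}}(L))_0$, which annihilates any $L$ with $p \notin \SSi(L)$ (take $\phi(x)=x$ in the definition) and hence factors through $\Derb(\cor_I; p)$, one extracts $B \oplus C[-1] \simeq \cor[d]$ in $\Derb(\cor)$. Since $\cor[d]$ is indecomposable, either $B \simeq \cor[d]$ with $C = 0$, or $B = 0$ with $C \simeq \cor[d+1]$, giving the two cases of the conclusion with $M := A$. I expect the main obstacle to be the middle step: justifying the reconstruction of $F$ from its quiver data and lifting the splitting of $\gamma$ to a decomposition of sheaves. An alternative route avoids this by invoking Lemma~\ref{lem:deux_decompositions} on a triangle built from the simplicity zigzag between $F$ and $\cor_{[0,b[}[d]$ to extract the desired summand directly.
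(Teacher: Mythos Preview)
Your argument is correct and complete, but it follows a different path from the paper's proof. You first classify $F$ completely by its quiver data (the map $\gamma\colon M_2\to M_1$), split $\gamma$ using semisimplicity of $\Derb(\cor)$ to obtain a decomposition $F\simeq A_I\oplus B_{[0,b[}\oplus C_{]a,0[}$, and only then invoke simplicity to force $B\oplus C[-1]\simeq\cor[d]$. The paper instead uses simplicity at the outset: from $F\simeq\cor_{\{0\}}[\delta]$ in $\Derb(\cor_I;p)$ it extracts a single morphism $v\colon\cor_{[0,\varepsilon[}[-\delta]\to F|_J$ whose cofiber $L$ has empty $\dot\SSi$, and then a short case analysis on the connecting map $u\colon L\to\cor_{[0,\varepsilon[}[-\delta]$ (zero or not) yields the two alternatives directly.

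Your route is more structural and yields the full three-term decomposition along the way; the cost is the reconstruction step you flag, namely that $F$ is determined by $(M_1,M_2,\gamma)$ via the pushout triangle. This is indeed the only point needing care: it follows from the attaching triangle $(M_1)_U\to F\to (M_2)_{[0,b[}\to[+1]$ once one checks that $\Hom((M_2)_{[0,b[},(M_1)_U[1])\simeq\Hom_{\Derb(\cor)}(M_2,M_1)$ and that the connecting map corresponds to $\gamma$; alternatively one verifies directly that the canonical comparison between $F$ and the cone of $(\iota,-\gamma)$ is an isomorphism on stalks. The paper's approach sidesteps this by never attempting a global classification, trading conceptual transparency for brevity. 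Your alternative suggestion via Lemma~\ref{lem:deux_decompositions} is in fact closer in spirit to what the paper does.
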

\begin{proof}
(i) Let $p=(0,1)\in T^*\R$. By definition we have $F\simeq\cor_0\,[\delta]$ in
$\Derb(\cor_M;p)$ for some $\delta\in\Z$.  The functor
$(\rsect_{[0,+\infty[}(\cdot))_0$ vanishes on the $F$ with $p\not\in\SSi(F)$, by
definition of the microsupport. Hence it is well-defined in $\Derb(\cor_M;p)$
and we find $(\rsect_{[0,+\infty[}F)_0 \simeq \cor[\delta]$.
The image of $1\in \cor$ by this isomorphism gives a morphism
$v\cl \cor_{[0,\varepsilon[}[-\delta] \to F|_J$ defined on some neighborhood
$J = {}]{-\varepsilon},\varepsilon[$ of $0$.  Then, defining $L$ on $J$ and
$u\cl L \to \cor_{[0,\varepsilon[}[-\delta]$ by the distinguished triangle
$L \to[u] \cor_{[0,\varepsilon[}[-\delta] \to[v] F|_J \to[+1]$, we have
$\SSi(L) \subset T^*_JJ$.

\medskip\noindent
(ii) If $u=0$, we obtain $F|_J \simeq L \oplus \cor_{[0,\varepsilon[}[-\delta]$.
If $u\not=0$, then we can decompose $L \simeq \cor_J[-\delta] \oplus L'$ (by
splitting $u_x$ for some $x\in \mo{]}0,\varepsilon[$) so that $u$ is induced by
the projection $L \to \cor_J[-\delta]$ composed with
$\cor_J \to \cor_{[0,\varepsilon[}$.
We deduce $F|_J \simeq L' \oplus \cor_{]-\varepsilon,0[}[1-\delta]$.  Since $F$ is
constant outside $0$ we deduce the lemma.
\end{proof}

We let $A=\{s_1,\dots,s_k\}$ be a finite subset of $I$ and we set
$\Lambda = (\bigsqcup_{i=1}^k T^*_{s_i}I ) \cap T^*_{\sigma>0}\R$.

\begin{proposition}\label{prop:simplesheafR}
{\rm(i)} Let $F\in \Derb(\cor_I)$ be such that $\dot\SSi(F) = \Lambda$
and $F$ is simple along $\Lambda$.
Then, up to reordering the indices of the $s_i$'s, there exists
an isomorphism
\begin{equation}\label{eq:simplesheafR0}
F \simeq L \oplus \bigoplus_{i=1}^l \cor_{[s_{2i-1},s_{2i}[} [d_i]
\oplus \bigoplus_{i=2l+1}^m \cor_{[a,s_i[} [d_i]
\oplus \bigoplus_{i=m+1}^k \cor_{[s_i,b[}  [d_i] ,
\end{equation}
for some integers $d_i$ and some $L\in\Derb(\cor_\R)$ with constant
cohomology sheaves.

\smallskip\noindent
{\rm(ii)} Using the notations of~\eqref{eq:simplesheafR0} we set
$S^\infty(F) = \{s_i$; $i=2l+1,\dots,k\}$. Then $S^\infty(F)$ only depends on $F$.
Moreover, for any distinguished triangle $F \to F' \to L' \to[+1]$ where
$\dot\SSi(L') =\emptyset$, we have $F'$ simple along $\Lambda$ and
$S^\infty(F') = S^\infty(F)$.
\end{proposition}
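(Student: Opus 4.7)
The strategy is to prove (i) by induction on $k = |A|$ using Lemma~\ref{lem:simplesheaflocal} as the key local input, and to derive (ii) from Krull--Schmidt uniqueness of the resulting decomposition.

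For (i), the base case $k = 0$ is immediate: $\dot\SSi(F) = \emptyset$ forces the cohomology sheaves $H^j(F)$ to be local systems on the contractible interval $I$, hence constant, so $F \simeq L$ is of the desired form. For the inductive step I pick the rightmost point $s_k$ and apply Lemma~\ref{lem:simplesheaflocal} on the neighbourhood $J_k = {}]s_{k-1}, b[$ (with the convention $s_0 = a$), yielding either a local summand $\cor_{[s_k, b[}[d_k]$ (the easy case) or $\cor_{]s_{k-1}, s_k[}[d_k]$. In the easy case, the sheaf $\cor_{[s_k, b[}$ is already globally defined on $I$ with the correct microsupport; the local inclusion and projection extend to global morphisms by excision, since $\RR\Hom_I(\cor_{[s_k, b[}, F) = \RR\sect_{[s_k, b[}(F)$ and $\RR\Hom_I(F, \cor_{[s_k, b[}) = \RR\Hom(F|_{[s_k, b[}, \cor)$ each depend only on $F$ near $[s_k, b[$, and $\End(\cor_{[s_k, b[}[d_k]) = \cor$ forces the composition to be the identity globally once it is so locally. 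This splits off $F \simeq \cor_{[s_k, b[}[d_k] \oplus C$ with $\dot\SSi(C) \subseteq \bigsqcup_{i < k} T^*_{s_i}I \cap T^*_{\sigma > 0}\R$, and the inductive hypothesis applies to $C$. In the hard case, the extension by zero of $\cor_{]s_{k-1}, s_k[}$ to $I$ has wrong-sign microsupport at $s_{k-1}$; instead, one tracks the bar leftward by iteratively applying Lemma~\ref{lem:simplesheaflocal} at $s_{k-1}, s_{k-2}, \ldots$ and matching local pieces via Lemma~\ref{lem:deux_decompositions}, until the correct global summand is identified as $\cor_{[s_j, s_k[}[d_k]$ for some $j < k$ or as $\cor_{]a, s_k[}[d_k]$; then one splits as in the easy case.

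For (ii), each indecomposable summand $\cor_{[\alpha, \beta[}[d]$ has endomorphism ring $\cor$, so the relevant subcategory of $\Derb(\cor_I)$ is Krull--Schmidt and the decomposition in (i) is unique up to reordering and isomorphism. In particular, the multiset of unbounded bar endpoints is an invariant of $F$, giving well-definedness of $S^\infty(F)$. For the second assertion, the triangle $F \to F' \to L' \to[+1]$ with $\dot\SSi(L') = \emptyset$ implies $\dot\SSi(F') \subseteq \Lambda$ by the triangular inequality, and a comparison of microstalks at each $(s_i, +)$ (where $L'$ contributes trivially) gives both $\dot\SSi(F') = \Lambda$ and the simplicity of $F'$ along $\Lambda$. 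Applying (i) to $F'$, the interval summands of $F'$ and $F$ coincide by Krull--Schmidt, since a locally constant $L'$ can only modify the constant piece $L$ of the decomposition and cannot create or destroy any interval summand; hence $S^\infty(F') = S^\infty(F)$.

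The main obstacle is the hard case of the inductive step: one must track an initially local summand leftward through several singular points and correctly decide whether it eventually yields a bounded bar $\cor_{[s_j, s_k[}[d_k]$ or a left-unbounded bar $\cor_{]a, s_k[}[d_k]$. This requires iterated, coherent patching of local decompositions via Lemma~\ref{lem:deux_decompositions}, together with a careful verification that the global inclusion and retraction still satisfy $\pi \circ \iota = \id$ on each intermediate neighbourhood.
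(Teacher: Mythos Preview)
Your outline has the right shape, but there are two genuine gaps.

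\textbf{Part (i): the hard case is not proved, and you are missing the reduction to degree $0$.} The paper first observes (from Lemma~\ref{lem:simplesheaflocal}) that each $H^jF$ again satisfies the hypotheses with $A$ replaced by a subset, and that $\Ext^p(H^iF,H^jF)=0$ for $p\ge 2$, whence $F\simeq\bigoplus_j H^jF[-j]$. This reduces the problem to $F\in\Mod(\cor_I)$. You skip this step and try to run the induction directly in $\Derb(\cor_I)$, which is precisely why your hard case becomes an ``obstacle'': in the abelian category the argument is clean. The paper works from the \emph{leftmost} point, obtains a monomorphism $u\cl\cor_{[s_1,s_2[}\hookrightarrow F$, and then takes $s\in (A\setminus\{s_1\})\cup\{b\}$ \emph{maximal} such that $u$ extends to a monomorphism $\cor_{[s_1,s[}\hookrightarrow F$. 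Setting $G=\coker$, one checks via Lemma~\ref{lem:simplesheaflocal} near $s$ that $G$ satisfies the hypotheses with $A$ shrunk, and then the key point is the computation $\Ext^1(\cor_{[x,y[},\cor_{[z,w[})=0$ whenever $x\neq w$, which forces the short exact sequence to split. Your ``track the bar leftward via iterated Lemma~\ref{lem:deux_decompositions}'' is not a proof: you never explain how to coherently glue the local retractions across successive singular points, and in the derived category there is no notion of ``extend the monomorphism maximally'' to organize the process.

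\textbf{Part (ii): your Krull--Schmidt argument does not give the second assertion.} Krull--Schmidt is fine for the well-definedness of $S^\infty(F)$, but your claim that ``a locally constant $L'$ can only modify the constant piece $L$'' is false as stated: for instance the triangle $\cor_{[s,b[}\to\cor_{]a,s[}[1]\to\cor_I[1]\to[+1]$ shows that an unbounded bar of type $[s,b[$ can become one of type $]a,s[$. What \emph{is} true, and what the paper actually proves, is that the \emph{bounded} summands $\cor_{[s_{2i-1},s_{2i}[}[d_i]$ of $F$ and $F'$ coincide; this follows directly from Lemma~\ref{lem:deux_decompositions} applied to the triangle $F\to F'\to L'\to[+1]$ (write $F'=H\oplus\cor_{[\alpha,\beta[}[d]$ and use the lemma with $G=0$ to conclude $\cor_{[\alpha,\beta[}[d]$ is a summand of $F$, and symmetrically). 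Since the bounded bars agree, so does the complementary set $S^\infty$. Your proposal invokes Lemma~\ref{lem:deux_decompositions} only in part~(i), where the paper does not use it, and omits it in part~(ii), where it is the whole argument.
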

\begin{proof}
(i-a) Let us first assume that $F$ is concentrated in degree $0$.  Let us
proceed by induction on $k=|A|$. The case $k=1$ is given by
Lemma~\ref{lem:simplesheaflocal}. If $k>1$, let $s_1<s_2$ be the first two
elements of $A$. By Lemma~\ref{lem:simplesheaflocal} we have either
$F|_{]a,s_2[} \simeq L \oplus \cor_{]a,s_1[}$ or
$F|_{]a,s_2[} \simeq L \oplus \cor_{[s_1,s_2[}$, for some constant sheaf $L$
on $]a,s_2[$.

\medskip\noindent
(i-b) If $F|_{]a,s_2[} \simeq L \oplus \cor_{]a,s_1[}$, then this decomposition
immediately extends to $F \simeq \cor_{]a,s_1[} \oplus G$, where $G$ satisfies
the same hypothesis as $F$ with $A$ replaced by $A\setminus \{s_1\}$.  Then the
induction hypothesis gives the result.

\medskip\noindent
(i-c) Now we assume that $F|_{]a,s_2[} \simeq L \oplus \cor_{[s_1,s_2[}$ and we let
$u\cl \cor_{[s_1,s_2[} \to F$ be the morphism induced by this decomposition.
Let $s\in (A\setminus \{s_1\}) \cup \{b\}$ be maximal such that there exists a
monomorphism $v\cl \cor_{[s_1,s[} \to F$ extending $u$.
We define $G$ by the exact sequence
\begin{equation}\label{eq:simplesheafR1}
  0 \to \cor_{[s_1,s[} \to F  \to G \to 0 .
\end{equation}
Using Lemma~\ref{lem:simplesheaflocal} around $s$, we see that
$G$ satisfies the same hypothesis as $F$, with $A$ replaced by
$A\setminus \{s_1,s\}$ (if $s$ is one of the $s_i$'s) or $A\setminus \{s_1\}$
(if $s=b$). By the induction hypothesis $G$ is a sum of sheaves of the type
$\cor_{[s',s''[}$ with $s',s''\in A\cup \{a,b\}$ and $s''\not=s_1$.  We remark
that $\Ext^1(\cor_{[x,y[},\cor_{[z,w[}) \simeq 0$ if $x\not= w$.  Hence the
exact sequence~\ref{eq:simplesheafR1} splits and we obtain the result.

\medskip\noindent
(i-d) For a general $F$ we deduce from Lemma~\ref{lem:simplesheaflocal} that
each $H^iF$ satisfies the same hypothesis as $F$, with $A$ replaced by some
subset of $A$. Hence we know the structure of $H^iF$ by~(i-a)-(i-c).
We deduce easily that $\Ext^p(H^iF,H^jF) \simeq 0$ for all 
$i,j$ and $p\geq 2$. This implies that $F \simeq \bigoplus_i H^iF[-i]$
and we obtain the result.

\medskip\noindent
(ii) Lemma~\ref{lem:deux_decompositions} implies that $F$ and $F'$ have the
same direct summands $\cor_{[s_{2i-1},s_{2i}[} [d_i]$, $i=1,\ldots,l$.
This is equivalent to~(ii).
\end{proof}

\section{The involutivity theorem}

The main tool in our proof of the Gromov-Eliashberg theorem is the involutivity
theorem of~\cite{KS90}.  This is a deep result originally inspired by the
similar theorem for the characteristic variety of a system of linear PDE's
(see {\em loc. cit.} for historical comments on this point).

We first recall a general definition of involutivity given in~\cite{KS90}.
Let $X$ be a manifold and let $x\in X$, $S\subset X$ be a point and a subset of
$X$.  We denote by $C_x(S) \subset T_xX$ the tangent cone of $S$ at $x$.  In
case $X$ is a vector space this is the set of $v \in X \simeq T_xX$ which can
be written $v = \lim_{n\to\infty} c_n(x_n-x)$, for some sequences
$\{c_n\}_{n\in \N}$ and $\{x_n\}_{n\in \N}$ with $c_n\in \R^+$, $x_n\in S$
satisfying $x = \lim_{n\to\infty} x_n$.
For two subsets $S_1,S_2 \subset X$ we also have $C_x(S_1,S_2) \subset T_xX$
(see~\cite{KS90}). In case $X$ is a vector space this is the set of $v$ which
can be written $v = \lim_{n\to\infty} c_n(x^1_n-x^2_n)$, for some sequences
$\{c_n\}_{n\in \N}$ and $\{x^i_n\}_{n\in \N}$ with $c_n\in \R^+$, $x^i_n\in S_i$,
$i=1,2$, satisfying $x = \lim_{n\to\infty} x^i_n$.

If $(E,\omega)$ is a symplectic vector space and $A\subset E$ we set
$A^{\perp\omega} = \{v\in E$; $\omega(v,w) = 0$, for all $w\in A\}$.

\begin{definition}[Def.~6.5.1 of~\cite{KS90}]\label{def:invol}
Let $(X,\omega)$ be a symplectic manifold and let $S$ be a locally closed subset
of $X$. For a given $p\in S$ we say that $S$ is coisotropic (or involutive) at
$p$ if $(C_p(S,S))^{\perp\omega_p} \subset C_p(S)$.
\end{definition}

\begin{theorem}[Thm.~6.5.4 of~\cite{KS90}]\label{thm:invol}
Let $M$ be a manifold and $F\in \Derb(\cor_M)$.
Then $\SSi(F)$ is coisotropic.
\end{theorem}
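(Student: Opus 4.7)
The statement is local on $T^*M$, and both $\SSi$ and the involutivity condition are invariant under contact transformations, which in turn admit sheaf-theoretic realizations via the quantized contact transformations of~\cite[Chap.~7]{KS90}. Fix $p_0\in\SSi(F)$ and suppose for contradiction that some $v\in(C_{p_0}(\SSi(F),\SSi(F)))^{\perp\omega_{p_0}}$ satisfies $v\notin C_{p_0}(\SSi(F))$. Using a quantized contact transformation together with rescaling I may choose symplectic coordinates $(x;\xi)$ on $T^*M$ near $p_0$ so that $p_0=(0;\xi_0)$ with $\xi_0=(0,\dots,0,1)$, and that $v=\partial_{x_1}$, the Hamilton vector field of the coordinate $\xi_1$.

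The two hypotheses on $v$ translate into geometric constraints on $\SSi(F)$ near $p_0$. The condition $v\notin C_{p_0}(\SSi(F))$ supplies an open conic neighborhood $\Gamma$ of $v$ in $T_{p_0}T^*M$ disjoint from $\SSi(F)$ near $p_0$. The condition $v\in (C_{p_0}(\SSi(F),\SSi(F)))^{\perp\omega_{p_0}}$ means that any limit direction $\lim c_n(q^1_n-q^2_n)$ with $q^i_n\in\SSi(F)\to p_0$ and $c_n\in\R^+$ is annihilated by $d\xi_1=-\omega_{p_0}(v,\cdot)$; equivalently, pairs of points in $\SSi(F)$ near $p_0$ differ to first order only in the variables $x_1,\dots,x_n,\xi_2,\dots,\xi_n$, with no leading $\xi_1$-spread. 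Together these constraints confine $\SSi(F)$, locally near $p_0$, to a thin set that leaves room to maneuver a hypersurface in the $x_1$-direction.

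This picture allows me to construct a $C^1$ family of functions $\phi_t\cl M\to\R$, $t\in[0,1]$, with $\phi_0(0)=0$ and $d\phi_0(0)=\xi_0$, such that the graph $\{(x;d\phi_t(x))\}$ misses $\SSi(F)$ for all $t\in(0,1]$, while the set $\{\phi_1\geq 0\}$ no longer contains the origin. Applying the microlocal Morse lemma (Corollary~\ref{cor:Morse}) to the joint function $(x,t)\mapsto\phi_t(x)$ along the trajectory gives an isomorphism
$(\rsect_{\phi_0\geq 0}F)_0\simeq(\rsect_{\phi_1\geq 0}F)_0$. But the right-hand side vanishes because $0\notin\{\phi_1\geq 0\}$, whereas the left-hand side is nonzero by the definition of $p_0\in\SSi(F)$ applied to $\phi_0$. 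The contradiction yields that $\SSi(F)$ is coisotropic at $p_0$, and since $p_0$ was arbitrary the theorem follows.

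The main obstacle is the construction of the family $\{\phi_t\}$: one must simultaneously (a) deform $\phi_0$ so as to push the half-space $\{\phi_t\geq 0\}$ off the base point $0$ in the $x_1$-direction, and (b) ensure $d\phi_t\notin\SSi(F)$ throughout $t\in(0,1]$. The symplectic-orthogonality hypothesis is precisely what provides the room for (b): since $\SSi(F)$ has no first-order spread in the $\xi_1$-direction, a small perturbation of $\phi_0$ tangent to $v=\partial_{x_1}$ can be made to clear $\SSi(F)$ at the graph level, at the cost of perturbing $d\phi_t$ only in the $\xi_1$-direction. The rigorous execution in~\cite[\S 6.5]{KS90} proceeds via a delicate reduction to normal form and an inductive argument on the geometry of the tangent cone, which is the central technical content of that section.
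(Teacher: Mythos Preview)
The paper does not contain a proof of this theorem: it is quoted as Theorem~6.5.4 of~\cite{KS90} and used as a black box throughout. There is therefore no ``paper's own proof'' to compare your proposal against.

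Your sketch does aim at the argument in~\cite[\S6.5]{KS90}, and the overall architecture---reduce to a normal form via a quantized contact transformation, then deform a test function $\phi_0$ through a family $\phi_t$ whose graphs avoid $\SSi(F)$, and contradict $p_0\in\SSi(F)$---is in the right spirit. However, there is a genuine gap. You assert that ``the left-hand side $(\rsect_{\phi_0\geq 0}F)_0$ is nonzero by the definition of $p_0\in\SSi(F)$ applied to $\phi_0$.'' The definition does not give this: $p_0\in\SSi(F)$ only says that in every conic neighborhood of $p_0$ there exist \emph{some} base point $x_0$ and \emph{some} function $\phi$ with $d\phi(x_0)$ in that neighborhood and $(\rsect_{\phi\geq 0}F)_{x_0}\not\simeq 0$. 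It does not guarantee nonvanishing for the particular $\phi_0$ you have chosen with $d\phi_0(0)=\xi_0$. In~\cite{KS90} this issue is handled by first proving a refined pointwise criterion for the microsupport that produces a controlled witness $(x_0,\phi)$; the deformation is then built around that witness, and the cone conditions are used to keep the deformed graphs away from $\SSi(F)$ uniformly near $x_0$, not just at the single point $0$. Without that preliminary step your contradiction does not close, and the phrase ``microlocal Morse lemma applied to the joint function $(x,t)\mapsto\phi_t(x)$'' hides exactly the uniformity one needs to justify.
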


We will use the following results when we apply Theorem~\ref{thm:invol}
in Section~\ref{sec:GET}.
\begin{lemma}\label{lem:inclu_cois}
Let $X$ be a symplectic manifold and let $S \subset S'$ be locally closed
subsets of $X$. Let $p\in S$. We assume that $S$ is coisotropic at $p$.
Then $S'$ is also coisotropic at $p$.
\end{lemma}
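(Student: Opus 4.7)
The plan is to unfold the definition of coisotropic and exploit the monotonicity of both the tangent cone construction $C_p(\cdot)$ and the pair tangent cone $C_p(\cdot,\cdot)$ with respect to inclusion, together with the inclusion-reversing behavior of the symplectic orthogonal $(\cdot)^{\perp\omega_p}$.

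First I would observe that from $S \subset S'$ it follows directly from the definitions recalled just before Definition \ref{def:invol} that $C_p(S) \subset C_p(S')$ and $C_p(S,S) \subset C_p(S',S')$: any sequence of points of $S$ (respectively any pair of sequences) is a fortiori a sequence of points of $S'$, so every tangent vector realized by such sequences is in the larger tangent cone. Next, since $(\cdot)^{\perp\omega_p}$ is inclusion-reversing on subsets of the symplectic vector space $T_pX$, the inclusion of pair tangent cones yields
\[
(C_p(S',S'))^{\perp\omega_p} \subset (C_p(S,S))^{\perp\omega_p}.
\]
Chaining this with the assumed coisotropy of $S$ at $p$, namely $(C_p(S,S))^{\perp\omega_p} \subset C_p(S)$, and with $C_p(S) \subset C_p(S')$ gives $(C_p(S',S'))^{\perp\omega_p} \subset C_p(S')$, which is exactly the coisotropy of $S'$ at $p$.

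There is no substantial obstacle here; the only thing to be a bit careful about is that $p$ really belongs to $S$ (hence to $S'$), as required by Definition \ref{def:invol}, so that both tangent cones and their pair versions are defined at the same basepoint. Everything else is a formal chain of inclusions.
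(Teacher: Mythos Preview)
Your argument is correct and is exactly the paper's proof: the chain of inclusions $(C_p(S',S'))^{\perp\omega_p} \subset (C_p(S,S))^{\perp\omega_p} \subset C_p(S) \subset C_p(S')$ is precisely what the author writes.
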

\begin{proof}
We have the inclusions\\
$(C_p(S',S'))^{\perp\omega_p} \subset (C_p(S,S))^{\perp\omega_p} \subset
C_p(S) \subset C_p(S')$.
\end{proof}

\begin{proposition}\label{prop:invol_invol}
Let $M$ be a manifold and $S\subset T^*M$ a locally closed subset.
We recall the map $\rho_M \cl T^*M\times\dT^*\R \to T^*M$
defined in~\eqref{eq:def_rho}.
Let $p\in S$ and $q\in \opb{\rho_M}(p)$.
Then $S$ is coisotropic at $p$ if and only if
$\opb{\rho_M}(S)$ is coisotropic at $q$.
\end{proposition}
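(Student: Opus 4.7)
The plan is to reduce the proposition to a linear-algebra statement about the differential $L := d\rho_M|_q\colon V\to W$ of $\rho_M$ at $q$, where $V = T_q(T^*M\times T^*\R)$ carries the symplectic form $\Omega = \omega_M \oplus (d\sigma\wedge ds)$ and $W = T_p T^*M$ carries $\omega_M$. Since the formula $(x,s;\xi,\sigma)\mapsto (x;\xi/\sigma)$ shows that $\rho_M$ is a smooth submersion, by the implicit function theorem it is locally a product projection, and a direct sequence argument in such product coordinates yields
\[
C_q(\opb{\rho_M}(S)) = L^{-1}(C_p(S)),\qquad C_q(\opb{\rho_M}(S),\opb{\rho_M}(S)) = L^{-1}(C_p(S,S)).
\]
Hence, setting $A := C_p(S)$ and $B := C_p(S,S)$, Proposition~\ref{prop:invol_invol} reduces to the linear-algebra claim
\[
(L^{-1}(B))^{\perp\Omega}\subset L^{-1}(A) \iff B^{\perp\omega_M}\subset A.
\]

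The geometric input that drives the linear algebra is a decomposition of $V$ adapted to $L$: at any $q\in\opb{\rho_M}(p)$, the kernel $K := \ker L$ is a $2$-dimensional \emph{symplectic} subspace of $(V,\Omega)$, and the restriction $L|_{K^\perp}\colon K^\perp \to W$ is a linear symplectic isomorphism. In coordinates with $p = (0;\eta_0)$ and $q = (0,0;\eta_0,1)$ one computes $L(\dot x,\dot s,\dot\xi,\dot\sigma) = (\dot x,\dot\xi-\dot\sigma\eta_0)$, so $K = \{(0,\dot s,\dot\sigma\eta_0,\dot\sigma)\}$; a short check shows $\Omega|_K = d\sigma\wedge ds|_K$ is nondegenerate (so $K$ is indeed symplectic) and $L|_{K^\perp}^{*}\omega_M = \Omega|_{K^\perp}$.

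Granting this, $V = K\oplus K^\perp$ is an $\Omega$-orthogonal decomposition, and for any $A\subset W$ one has $L^{-1}(A) = K\oplus L|_{K^\perp}^{-1}(A)$, and similarly for $B$. Using $K^{\perp\Omega}\cap K = 0$ (because $K$ is symplectic), one obtains
\[
(L^{-1}(B))^{\perp\Omega} \;=\; L|_{K^\perp}^{-1}\bigl(B^{\perp\omega_M}\bigr)\;\subset\; K^\perp.
\]
The inclusion $(L^{-1}(B))^{\perp\Omega}\subset L^{-1}(A)$ is therefore equivalent to $L|_{K^\perp}^{-1}(B^{\perp\omega_M})\subset L|_{K^\perp}^{-1}(A)$ (the $K$-summand of $L^{-1}(A)$ being irrelevant, as both sides lie in $K^\perp$), and hence, since $L|_{K^\perp}$ is an isomorphism, to $B^{\perp\omega_M}\subset A$. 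This completes the reduction.

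The only delicate point is the second tangent-cone identity, $C_q(\opb{\rho_M}(S),\opb{\rho_M}(S)) = L^{-1}(C_p(S,S))$: ordinary tangent cones pull back cleanly under any submersion, but pair-tangent cones a priori require controlling second-order error terms when comparing $\rho_M(x_n^1)-\rho_M(x_n^2)$ with $L(x_n^1-x_n^2)$. The resolution is to straighten $\rho_M$ into a linear product projection in local coordinates, eliminating the nonlinear error altogether and making both inclusions immediate.
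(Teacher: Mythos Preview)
Your argument is correct and follows the same overall strategy as the paper: compute the tangent cones of $\rho_M^{-1}(S)$ at $q$ in terms of those of $S$ at $p$ via the differential $L=d\rho_M|_q$, and then use the symplectic nature of $L$ to transfer the coisotropy condition. The difference lies only in how the symplectic step is handled. The paper first normalizes by symplectic coordinate changes (on $T^*M$ and on $T^*(M\times\R)$) to arrange $\sigma_0=1$ and $\xi_0=0$; then $L$ becomes the bare projection $(X,S;\Xi,\Sigma)\mapsto(X;\Xi)$, and one reads off directly $C_q(S')=C_p(S)\times T_{(s_0;1)}T^*\R$, $C_q(S',S')=C_p(S,S)\times T_{(s_0;1)}T^*\R$, from which the equivalence is immediate. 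You instead keep $\xi_0$ general and argue invariantly that $K=\ker L$ is a symplectic plane with $L|_{K^\perp}$ a linear symplectomorphism onto $T_pT^*M$, and then run the orthogonal-complement computation $(K\oplus U)^{\perp\Omega}=0\oplus U^{\perp}$. Both routes are short; the paper's normalization avoids the decomposition lemma, while your version avoids having to write down the explicit symplectic shear $(x,s;\xi,\sigma)\mapsto(x,s+\langle\xi_0,x\rangle;\xi-\sigma\xi_0,\sigma)$.
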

\begin{proof}
We use coordinates $(x,s;\xi,\sigma)$ on $T^*(M\times\R)$ and
corresponding coordinates $(X,S;\Xi,\Sigma)$ on $T_qT^*(M\times\R)$.
We set $S' = \opb{\rho_M}(S)$ and we write $q=(x_0,s_0;\xi_0,\sigma_0)$.
We have $d\rho_{M,q}(X,S;\Xi,\Sigma)
 = (X; \frac{1}{\sigma_0} \Xi - \frac{\xi_0}{\sigma_0^2} \Sigma)$.
Since $S'$ is conic, we may assume $\sigma_0=1$.
Using the symplectic transformations $(x;\xi) \mapsto (x;\xi-\xi_0)$
on $T^*M$ and $(x,s;\xi,\sigma)\mapsto
(x,s+\langle \xi_0,x\rangle; \xi-\sigma \xi_0,\sigma)$
on $T^*(M\times \R)$ we may also assume $\xi_0=0$.
Then we have $d\rho_q (X,S;\Xi,\Sigma) = (X;\Xi)$ and we deduce
$C_q(S') = C_p(S) \times T_{(s_0;1)}T^*\R$
and $C_q(S',S') = C_p(S,S) \times T_{(s_0;1)}T^*\R$.
Now the result follows easily.
\end{proof}

\section{Bounds for microsupports}

For a real vector space $V$ and $A\subset V$ we denote by $\Conv(A)$ the convex
hull of $A$. Let $X$ be a manifold.  For $\Lambda \subset T^*X$ we define
$\Conv(\Lambda) \subset T^*X$ by
$\Conv(\Lambda) \cap T^*_xX = \Conv(\Lambda\cap T^*_xX)$, for all $x\in X$.  In
general it can happen that $\Lambda$ is closed but not $\Conv(\Lambda)$.  We
leave the following result to the reader.
\begin{lemma}\label{lem:Conv_ferme}
Let $\Lambda \subset T^*X$ be a closed conic subset.
We assume that $\Conv(\Lambda\cap T^*_xX)$ is proper for all $x\in X$,
that is, $\Conv(\Lambda\cap T^*_xX)$ contains no line.
Then $\Conv(\Lambda)$ is closed.
\end{lemma}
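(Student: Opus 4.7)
The plan is to show that if $(x_n,\xi_n)\in \Conv(\Lambda)$ converges to $(x_\infty,\xi_\infty)\in T^*X$, then $\xi_\infty\in \Conv(\Lambda_{x_\infty})$, where $\Lambda_y \eqdot \Lambda\cap T^*_yX$. Since the statement is local on the base, I would work in a chart and identify $T^*X$ with $X\times \R^d$, where $d=\dim X$, so that the fibers can be compared as subsets of the same vector space $\R^d$.

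Since $\Lambda$ is conic, the set $\Conv(\Lambda_y)$ is a convex cone in $T^*_yX \simeq \R^d$. By the conic version of Carathéodory's theorem, each $\xi_n$ can be written as a sum
\begin{equation*}
\xi_n = \xi_n^{(1)} + \cdots + \xi_n^{(d)},\qquad \xi_n^{(i)}\in \Lambda_{x_n}
\end{equation*}
(some summands possibly zero). I would then distinguish two cases according to the behaviour of $r_n \eqdot \max_i \|\xi_n^{(i)}\|$.

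First, if $\{r_n\}$ is bounded, I extract a subsequence along which each $\xi_n^{(i)}$ converges to some $\eta^{(i)}\in T^*_{x_\infty}X$. Closedness of $\Lambda$ gives $\eta^{(i)}\in \Lambda_{x_\infty}$, and continuity of addition gives $\xi_\infty = \sum_i \eta^{(i)} \in \Conv(\Lambda_{x_\infty})$, which is what we want. The delicate case, which I expect to be the main point of the argument, is when $r_n\to+\infty$ along a subsequence. Then I would rescale: the vectors $\xi_n^{(i)}/r_n$ are bounded, so after passing to a subsequence they converge to some $\eta^{(i)}$, and by closedness and conicity of $\Lambda$ we have $\eta^{(i)}\in \Lambda_{x_\infty}$. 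On the other hand $\sum_i \xi_n^{(i)}/r_n = \xi_n/r_n \to 0$ since $\xi_n$ is bounded, so $\sum_i \eta^{(i)} = 0$, and by construction at least one of the $\eta^{(i)}$ has norm $1$.

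It remains to derive a contradiction from the properness hypothesis. Fix an index $i_0$ with $\|\eta^{(i_0)}\|=1$. Then $\eta^{(i_0)}\in \Lambda_{x_\infty}\subset \Conv(\Lambda_{x_\infty})$, while $-\eta^{(i_0)} = \sum_{i\ne i_0}\eta^{(i)}$ is a finite sum of elements of $\Lambda_{x_\infty}$; since $\Conv(\Lambda_{x_\infty})$ is a convex cone, it is stable under finite sums, hence contains $-\eta^{(i_0)}$ as well. Combined with conicity this gives $\R\,\eta^{(i_0)}\subset \Conv(\Lambda_{x_\infty})$, contradicting the assumption that $\Conv(\Lambda_{x_\infty})$ is proper. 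Therefore the case $r_n\to+\infty$ cannot occur, and $\xi_\infty\in \Conv(\Lambda_{x_\infty})$, proving $\Conv(\Lambda)$ is closed.
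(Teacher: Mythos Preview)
Your argument is correct. The paper does not actually prove this lemma---it states it and then writes ``We leave the following result to the reader''---so there is nothing to compare your approach against; your use of the conic Carath\'eodory theorem together with the bounded/unbounded dichotomy on the summands is the natural way to fill in the omitted details.
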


We will use the following cut-off result (other similar and more precise
results are recalled in Section~\ref{sec:cutoff}).
Let $V$ be a vector space and let $\gamma \subset V$ be a closed convex
cone. Let $V_\gamma$ be the space $V$ endowed with the topology whose open sets
are the usual open subsets $\Omega$ of $V$ such that $\Omega+\gamma=\Omega$.
Let $\Mod(\cor_{V_\gamma})$ be the category of sheaves of $\cor$-vector spaces
on $V_\gamma$ and $\Derb(\cor_{V_\gamma})$ its bounded derived category.  The
identity map induces a continuous map $\phi_\gamma \cl V \to V_\gamma$.

\begin{proposition}[see \S3.5 and Prop.~5.2.3 of~\cite{KS90}]
\label{prop:cut-off-bis}
The inverse image $\opb{\phi_\gamma} \cl \Derb(\cor_{V_\gamma}) \to \Derb(\cor_V)$
induces an equivalence between $\Derb(\cor_{V_\gamma})$ and the full
subcategory of $\Derb(\cor_V)$ consisting of the $F$ such that
$\SSi(F) \subset V \times \gamma^{\circ a}$.
\end{proposition}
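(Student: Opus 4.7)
The plan is to establish the equivalence by verifying three claims for the adjoint pair $\opb{\phi_\gamma} \dashv \roim{\phi_\gamma}$ afforded by continuity of $\phi_\gamma$: \textbf{(A)} the essential image of $\opb{\phi_\gamma}$ lies in $\shn_\gamma \eqdot \{F \in \Derb(\cor_V) \cl \SSi(F) \subset V \times \gamma^{\circ a}\}$; \textbf{(B)} the unit $G \to \roim{\phi_\gamma}\opb{\phi_\gamma}G$ is an isomorphism for every $G \in \Derb(\cor_{V_\gamma})$, giving fully faithfulness; \textbf{(C)} for $F \in \shn_\gamma$, the counit $\opb{\phi_\gamma}\roim{\phi_\gamma}F \to F$ is an isomorphism, giving essential surjectivity onto $\shn_\gamma$.

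For (A), I would check the microsupport bound at a point $(x_0; \xi_0)$ with $\xi_0 \notin \gamma^{\circ a}$. By definition of the polar cone, there exists $v \in \gamma$ with $\langle v, \xi_0 \rangle > 0$. For any $C^1$-function $\phi$ satisfying $\phi(x_0) = 0$ with $d\phi(x_0)$ close enough to $\xi_0$, translation by a small $\varepsilon v$ pushes $\{\phi \geq 0\}$ strictly inside $\{\phi > 0\}$ near $x_0$. Since the sections of $\opb{\phi_\gamma}G$ on a small neighborhood $U$ of $x_0$ are computed from $G(\Omega)$ on $\gamma$-opens $\Omega$ containing $U$ (and hence also $U + \varepsilon v$), this translation produces a homotopy implying $(\rsect_{\{\phi \geq 0\}}\opb{\phi_\gamma}G)_{x_0} = 0$, so $(x_0; \xi_0) \notin \SSi(\opb{\phi_\gamma}G)$.

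For (B), on any $\gamma$-open $\Omega$ the underived map $G(\Omega) \to (\opb{\phi_\gamma}G)(\Omega)$ is an isomorphism, since $\Omega$ is cofinal among $\gamma$-opens containing itself. To lift this to the derived level I would take an injective resolution $G \to I^\bullet$ in $\Mod(\cor_{V_\gamma})$ and exploit exactness of $\opb{\phi_\gamma}$; the key sublemma is that each $\opb{\phi_\gamma}I^j$ is $\oim{\phi_\gamma}$-acyclic on $\gamma$-opens, which reduces via a \v{C}ech-type calculation to the fact that injectives on $V_\gamma$ are flabby and that $\gamma$-opens form a base of the topology on $V_\gamma$ stable under finite intersections.

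For (C), the deepest step, I evaluate the counit stalk-wise at $x_0$: it reads $\mathrm{colim}_{\Omega \ni x_0,\, \gamma\text{-open}} \rsect(\Omega; F) \to F_{x_0}$. For $F \in \shn_\gamma$, any $v \in \Int(\gamma) \setminus \{0\}$ satisfies $\langle v, v \rangle > 0$, so the linear functional $\psi_v(x) = \langle v, x \rangle$ has differential $v \notin \gamma^{\circ a}$ and hence avoids $\dot\SSi(F)$. Applying the microlocal Morse lemma (Corollary~\ref{cor:Morse}) to $\psi_v$ contracts $\gamma$-opens along the $\gamma$-direction without changing sections of $F$; combining this with shrinking ordinary neighborhoods in the transverse $\gamma^\circ$-directions produces a cofinal family of $\gamma$-opens whose sections compute $F_{x_0}$. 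The main obstacle lies exactly here: one must arrange the geometry so that the deformations of $\gamma$-open sets required by the Morse lemma respect its properness hypothesis and never cross a stratum on which $\SSi(F)$ meets the relevant conormal, which requires a careful inductive construction (after suitable truncation of $F$ on small compact neighborhoods of $x_0$) of the contracting family of $\gamma$-opens.
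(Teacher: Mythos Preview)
The paper does not give its own proof of this proposition: it is stated with an explicit reference to \cite[\S3.5 and Prop.~5.2.3]{KS90} and used as a black box. So the relevant comparison is with the argument in \cite{KS90}.

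Your adjunction framework (A)--(C) is correct and is exactly how the statement is organized in \cite{KS90}. Parts (A) and (B) are essentially what is done there (\S3.5). The substantive difference is in (C). Kashiwara--Schapira do \emph{not} compute the counit stalkwise via a Morse-type contraction of $\gamma$-opens. Instead they prove (this is the content of \cite[Prop.~3.5.3]{KS90}) a functorial identification
\[
\opb{\phi_\gamma}\,\roim{\phi_\gamma} F \;\simeq\; P_\gamma(F)
\;=\; \roim{q_2}\bigl(\cor_{\tw\gamma}\otimes \opb{q_1}F\bigr),
\]
the very convolution functor introduced just after this proposition in the present paper. Under this identification the counit becomes the morphism $u_\gamma(F)\cl P_\gamma(F)\to F$ induced by $\cor_\gamma\to\cor_{\{0\}}$, and one concludes by a microsupport computation on its cone (essentially the bounds \eqref{eq:SSPgamF} together with the hypothesis $\SSi(F)\subset V\times\gamma^{\circ a}$). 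This bypasses entirely the obstacle you flag at the end: the kernel $\cor_\gamma$ packages the ``contraction along $\gamma$'' globally, so no hand-built cofinal family of $\gamma$-opens and no properness bookkeeping are needed.

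Your direct Morse route is not wrong, but as you yourself note, making the contraction of an unbounded $\gamma$-open $B_\varepsilon(x_0)+\gamma$ down to a small neighborhood of $x_0$ rigorous requires first replacing $F$ by something like $F\otimes\cor_Z$ for a compact $Z$ with $\SSi(\cor_Z)\subset V\times\gamma^{\circ a}$ (compare the proof of Proposition~\ref{prop:boundSStronc}), and then carefully choosing the Morse function so that its level sets sweep out the right family. This works, but the convolution identification is both shorter and conceptually cleaner, and it is what \cite{KS90} actually does.
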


The following consequence was pointed out to the author by Pierre Schapira.
\begin{proposition}\label{prop:boundSStronc}
Let $F\in \Derb(\cor_X)$ be such that $\Conv(\SSi(F)\cap T^*_xX)$ is proper
for all $x\in X$.
Then, for all $n\in \Z$, the microsupports of $\tau_{\leq n} F$,
$\tau_{\geq n} F$ and $H^n(F)$ are contained in $\Conv(\SSi(F))$.
\end{proposition}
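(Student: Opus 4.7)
The plan is to reduce the problem to a fiberwise microsupport statement and then invoke Proposition~\ref{prop:cut-off-bis}. Since $\Conv(\SSi(F))$ is closed by Lemma~\ref{lem:Conv_ferme} and is defined fiber by fiber, it suffices to fix a point $p=(x_0;\xi_0)\in T^*X$ with $\xi_0\notin \Conv(\SSi(F)\cap T^*_{x_0}X)$ and show that $p\notin \SSi(G)$ for each $G\in\{\tau_{\leq n}F,\ \tau_{\geq n}F,\ H^n(F)\}$. In a chart we may take $X=V$ a real vector space and $x_0=0$.

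First I would produce a separating cone. The set $C:=\Conv(\SSi(F)\cap T^*_0V)$ is a closed proper convex cone in $V^*$ not containing $\xi_0$, so Hahn--Banach together with a small perturbation furnishes a closed convex cone $\gamma\subset V$ with nonempty interior such that $C\setminus\{0\}\subset \Int(\gamma^{\circ a})$, the cone $\gamma^{\circ a}$ is proper, and $\xi_0\notin \gamma^{\circ a}$. Since $\SSi(F)$ is closed and $\R^+$-conic, a compactness argument on the unit sphere in $V^*$ then yields an open neighborhood $U$ of $0$ with $\SSi(F)\cap T^*U\subset U\times\gamma^{\circ a}$.

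Next I would apply Proposition~\ref{prop:cut-off-bis}, which identifies the full subcategory of $\Derb(\cor_V)$ of sheaves with microsupport in $V\times\gamma^{\circ a}$ with $\Derb(\cor_{V_\gamma})$ via $\opb{\phi_\gamma}$. Because $\phi_\gamma$ is a continuous map of topological spaces, its inverse image $\opb{\phi_\gamma}$ is exact at the sheaf level and is therefore $t$-exact, i.e.\ commutes with the truncation functors $\tau_{\leq n}$, $\tau_{\geq n}$ and with $H^n$. Hence whenever $\tilde F\simeq \opb{\phi_\gamma}\tilde G$, the objects $\tau_{\leq n}\tilde F$, $\tau_{\geq n}\tilde F$ and $H^n(\tilde F)$ all lie in the essential image of $\opb{\phi_\gamma}$, and so each has microsupport contained in $V\times\gamma^{\circ a}$.

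The main obstacle will be passing from the local hypothesis on $U$ to the global setting of Proposition~\ref{prop:cut-off-bis}. I expect this to be bridged by producing, from $F$, a global sheaf $\tilde F\in \Derb(\cor_V)$ with $\SSi(\tilde F)\subset V\times\gamma^{\circ a}$ everywhere and $\tilde F|_{U'}\simeq F|_{U'}$ on some smaller neighborhood $U'$ of $0$; natural candidates are the unit of the adjunction $\opb{\phi_\gamma}\dashv \roim{\phi_\gamma}$ applied to $F$, or a convolution with $\cor_\gamma$. Once such $\tilde F$ is in place, the previous paragraph combined with $\xi_0\notin \gamma^{\circ a}$ forces $p\notin \SSi(G)$ for each $G$ in the list, and since $p$ was arbitrary outside $\Conv(\SSi(F))$ this finishes the proof.
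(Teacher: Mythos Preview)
Your strategy is exactly the paper's: localize at $x_0=0$, choose a proper cone $\gamma$ with $\Conv(\SSi(F)\cap T^*_0V)\setminus\{0\}\subset\Int(\gamma^{\circ a})$, find a neighborhood $U$ on which $\SSi(F)\subset U\times\gamma^{\circ a}$, then use Proposition~\ref{prop:cut-off-bis} together with the $t$-exactness of $\opb{\phi_\gamma}$. The only missing piece is the one you flag yourself, and your two suggested bridges are not the right ones.

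Convolution with $\cor_\gamma$ (that is, the functor $P_\gamma$ of Section~\ref{sec:cutoff}) only gives an isomorphism $P_\gamma(F)\to F$ on $V\times\Int(\gamma^{\circ a})$, not on an open subset of the base; so the truncations of $P_\gamma(F)$ need not agree with those of $F$ near $0$. The adjunction unit $F\to\opb{\phi_\gamma}\roim{\phi_\gamma}F$ has the same defect and in addition $\roim{\phi_\gamma}F$ is not obviously bounded. What the paper does instead is elementary: it tensors $F$ with the constant sheaf on a carefully chosen compact neighborhood of $0$. Concretely, pick a slightly larger open cone $\gamma'\supset\gamma\setminus\{0\}$ with $\ol{\gamma'}$ proper, and set
\[
\gamma'_\varepsilon \;=\; \bigl((0,\dots,0,\varepsilon)+\gamma'\bigr)\cap\{x_n\geq -\varepsilon\},
\]
a truncated translated cone which is a neighborhood of $0$ with $\ol{\gamma'_\varepsilon}\subset U$ for $\varepsilon$ small. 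One checks from Example~\ref{ex:microsupport} that $\SSi(\cor_{\gamma'_\varepsilon})\subset\ol{\gamma'_\varepsilon}\times\gamma^{\circ a}$. Then $F':=F\tens\cor_{\gamma'_\varepsilon}$ coincides with $F$ on $\Int(\gamma'_\varepsilon)$ and, by Corollary~\ref{cor:opboim}~(i), satisfies $\SSi(F')\subset V\times\gamma^{\circ a}$ globally. Now Proposition~\ref{prop:cut-off-bis} applies to $F'$ on all of $V$, and since $\opb{\phi_\gamma}$ commutes with $\tau_{\leq n}$, $\tau_{\geq n}$, $H^n$, the rest of your argument goes through verbatim. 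Letting $\gamma^{\circ a}$ shrink to $\Conv(\SSi(F)\cap T^*_0V)$ finishes the proof.
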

\begin{proof}
(i) We set $\Lambda = \SSi(F)$.
Let $G\in \Derb(\cor_X)$ be one of $\tau_{\leq n} F$, $\tau_{\geq n} F$ or
$H^n(F)$. We prove that $\SSi(G) \cap T^*_{x_0}X$ is contained in
$\Conv(\Lambda\cap T^*_{x_0}X)$ for any $x_0 \in X$.
Since this is a local problem around $x_0$ we may assume that $X$ is an open
subset of a vector space, say $V$, and that $T^*X \simeq X \times V^*$.
We also assume that $x_0 = 0$.

\medskip\noindent
(ii) We let $\gamma \subset V$ be a closed convex proper cone of $V$ such that
$\gamma^{\circ a}$ is a neighborhood of
$\Conv(\Lambda\cap T^*_0X) \setminus \{0\}$.
We can find a neighborhood of $0$, say $U$, such that
$\Lambda \cap T^*U \subset U \times \gamma^{\circ a}$.
We choose coordinates $(x_1,\ldots,x_n)$ on $V$ such that
$(0,\ldots,0, -1) \in \gamma$. We choose an open convex cone $\gamma'$
such that $\ol{\gamma'}$ is proper and
$\gamma \setminus \{0\} \subset \gamma'$.
For $\varepsilon>0$ we define
$\gamma'_\varepsilon 
= ( (0,\ldots,0,\varepsilon) + \gamma') \cap \{x_n \geq -\varepsilon\}$.
Then $\gamma'_\varepsilon$ is a neighborhood of $0$ in $V$ and,
for $\varepsilon$ small enough, we have $\ol{\gamma'_\varepsilon} \subset U$.
Moreover $\SSi(\cor_{\gamma'_\varepsilon})
\subset \ol{\gamma'_\varepsilon}\times \gamma^{\circ a}$.

\medskip\noindent
(iii) We set $F' = F \tens \cor_{\gamma'_\varepsilon}$. Then $F'$ is isomorphic
to $F$ on $\Int \gamma'_\varepsilon$ and
$\SSi(F') \subset V \times \gamma^{\circ a}$ by
Corollary~\ref{cor:opboim}. Hence, by Proposition~\ref{prop:cut-off-bis} there
exists $F_1\in \Derb(\cor_{V_\gamma})$ such that
$F' \simeq \opb{\phi_\gamma}(F_1)$.
Then $G|_{\Int \gamma'_\varepsilon}
\simeq \opb{\phi_\gamma}(G_1)|_{\Int \gamma'_\varepsilon}$, where $G_1$ is one of
$\tau_{\leq n} F_1$, $\tau_{\geq n} F_1$ or $H^n(F_1)$. 
By Proposition~\ref{prop:cut-off-bis} again we deduce
$\SSi(G|_{\Int \gamma'_\varepsilon}) \subset V \times \gamma^{\circ a}$.
Since $\gamma^{\circ a}$ is an arbitrary small neighborhood of
$\Conv(\Lambda\cap T^*_0X) \setminus \{0\}$, we obtain that
$\SSi(G) \cap T^*_0X$ is contained in $\Conv(\Lambda\cap T^*_0X)$, as required.
\end{proof}

\begin{proposition}\label{prop:boundSSlim}
Let $X$ be a manifold and $a \leq b \in \Z$.
Let $\{K_n\}_{n\in\N}$ be a family of objects of $\Derb(\cor_X)$ such that
$H^iK_n = 0$ for $i\not\in [a,b]$ and for all $n\in\N$.
We define $K\in \Derb(\cor_X)$ by the distinguished triangle
$$
\bigoplus_{n\in\N} K_n \to \prod_{n\in\N} K_n \to K \to[+1].
$$
Then we have $\SSi(K) \subset \bigcap_{k\in \N} \ol{\bigcup_{n\geq k} \SSi(K_n)}$.
\end{proposition}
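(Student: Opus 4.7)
The plan is to show, for each fixed $k \in \N$, the inclusion $\SSi(K) \subset \ol{\bigcup_{n\geq k}\SSi(K_n)}$, and then intersect over $k$. The first step is to split the sum and product at index $k$: since $\{0,\ldots,k-1\}$ is finite, $\bigoplus_{n<k}K_n = \prod_{n<k}K_n$ and the canonical morphism $\bigoplus_n K_n \to \prod_n K_n$ decomposes as the identity on this finite factor plus the natural map $u_k\cl \bigoplus_{n\geq k}K_n \to \prod_{n\geq k}K_n$ on the complement. The triangle defining $K$ thus identifies $K$ with the cone of $u_k$, and the triangular inequality gives
\[
\SSi(K) \subset \SSi\Bigl(\bigoplus_{n\geq k}K_n\Bigr) \cup \SSi\Bigl(\prod_{n\geq k}K_n\Bigr).
\]
It therefore suffices to prove that, for any family $\{F_n\}$ in $\Derb(\cor_X)$ of uniformly bounded amplitude, both $\SSi(\bigoplus_n F_n)$ and $\SSi(\prod_n F_n)$ are contained in $\ol{\bigcup_n \SSi(F_n)}$.

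For the direct sum the argument is routine. Given $p$ outside $\ol{\bigcup_n \SSi(F_n)}$ I would pick an open conic neighborhood $W\ni p$ disjoint from every $\SSi(F_n)$. Arbitrary direct sums in $\Mod(\cor_X)$ are exact and commute with the stalk functor, and a direct sum of injective resolutions resolves the direct sum, so $\rsect_{\{\phi\geq0\}}$ commutes with direct sums. Consequently $(\rsect_{\{\phi\geq0\}}\bigoplus_n F_n)_{x_0} \simeq \bigoplus_n(\rsect_{\{\phi\geq0\}} F_n)_{x_0}$, which vanishes termwise as soon as $(x_0;d\phi(x_0))$ lies in a small enough subneighborhood of $p$ contained in $W$; hence $p\notin \SSi(\bigoplus_n F_n)$.

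The direct product is the main obstacle. The functor $\rsect_{\{\phi\geq0\}}$ still commutes with products (products of injective sheaves of $\cor$-modules are injective, since $\cor$ is a field and products of $\cor$-vector spaces are exact), but the stalk functor is a filtered colimit and does not commute with infinite products in general, so the pointwise vanishing of $(\rsect_{\{\phi\geq0\}}F_n)_{x_0}$ at each $n$ does not immediately give vanishing for the product. The uniform amplitude hypothesis $H^iK_n=0$ for $i\notin[a,b]$ is what unblocks this: it carries over to $\prod_n K_n$, and the standard truncation triangles in $\Derb(\cor_X)$ reduce the statement to a family of honest sheaves concentrated in a single degree. For such a family, products commute with $\sect(U,\cdot)$ and with $\sect_{\{\phi\geq0\}}(U,\cdot)$ on each open $U$, and the required stalkwise vanishing at $x_0$ can be arranged uniformly in $n$ because the condition $\SSi(F_n)\cap W=\emptyset$ holds simultaneously for all $n$ on one fixed open $W$. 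Reassembling via the finitely many truncation triangles yields $p\notin \SSi(\prod_n F_n)$, and intersecting the resulting bound over $k$ completes the proof.
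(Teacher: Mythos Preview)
Your overall architecture matches the paper exactly: rewrite the defining triangle as
$\bigoplus_{n\geq k}K_n \to \prod_{n\geq k}K_n \to K \to[+1]$ for each $k$, apply the triangular inequality, and then bound $\SSi(\bigoplus_{n\geq k}K_n)$ and $\SSi(\prod_{n\geq k}K_n)$ by $\ol{\bigcup_{n\geq k}\SSi(K_n)}$. The paper simply cites \cite[Exe.~V.7]{KS90} for those two bounds; you try to supply the argument. The direct-sum case is fine.

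The product case, however, has a genuine gap. Two issues:

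\medskip
\textbf{(1) The truncation reduction does not do what you need.} Even granting that $\prod_n K_n$ has amplitude in $[a,b]$ and that $\SSi(\prod_n K_n)\subset\bigcup_i\SSi\bigl(H^i(\prod_n K_n)\bigr)$, you would then have to relate $\SSi\bigl(H^i(\prod_n K_n)\bigr)$ to the $\SSi(K_n)$. If you try to pass through $\prod_n H^i(K_n)$, you run into the fact that products of sheaves are not exact; if instead you try to use $\SSi(H^i K_n)$, you need $\SSi(H^i K_n)\subset\SSi(K_n)$, which is false in general (only the much weaker bound $\SSi(H^iK_n)\subset\Conv(\SSi(K_n))$ of Proposition~\ref{prop:boundSStronc} holds). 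So the reduction to ``a family of honest sheaves'' does not land you where you want.

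\medskip
\textbf{(2) Even for honest sheaves, ``uniformity of $W$'' is not enough at the stalk level.} You correctly observe that $\rsect_{\{\phi\geq0\}}$ commutes with products, so the question becomes whether
$\bigl(\prod_n \rsect_{\{\phi\geq0\}}F_n\bigr)_{x_0}=0$ follows from $(\rsect_{\{\phi\geq0\}}F_n)_{x_0}=0$ for every $n$. But each of the latter vanishings is a filtered colimit statement: every section dies on \emph{some} smaller neighborhood of $x_0$, and that neighborhood may depend on $n$ (and on the section). The hypothesis $\SSi(F_n)\cap W=\emptyset$ for a single fixed $W$ does not, by itself, produce one neighborhood that works simultaneously for all $n$. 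Your sentence ``can be arranged uniformly in $n$'' is precisely the missing step.

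\medskip
The way out, and presumably the content of \cite[Exe.~V.7]{KS90}, is to replace the stalk test by one of the equivalent characterizations of the microsupport in \cite[Prop.~5.1.1]{KS90} phrased as an isomorphism between section groups over \emph{fixed} open sets (for instance, that $\rsect(\Omega;F)\to\rsect((x-\Int\gamma)\cap\Omega;F)$ is an isomorphism for suitable $\Omega$, $\gamma$, and all $x$ near $x_0$). The open sets there depend only on the open cone $W$ avoided by the microsupports, hence are uniform in $n$, and an arbitrary product of isomorphisms is an isomorphism. Once you invoke that criterion, the bounded-amplitude hypothesis is only needed to ensure $\prod_n K_n$ lives in $\Derb(\cor_X)$; no truncation trick is required.
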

\begin{proof}
For any $k\in \N$ we also have a distinguished triangle
$\bigoplus_{n\geq k} K_n \to \prod_{n\geq k} K_n \to K \to[+1]$.
We can check, similarly as in~\cite[Exe. V.7]{KS90}, that
$\SSi(\bigoplus_{n\geq k} K_n) \subset \ol{\bigcup_{n\geq k} \SSi(K_n)}$
and $\SSi(\prod_{n\geq k} K_n) \subset \ol{\bigcup_{n\geq k} \SSi(K_n)}$.
We conclude by the triangular inequality for the microsupport.
\end{proof}

\section{Approximation of symplectic maps}

Let $(E,\omega)$ be a symplectic vector space which we identify with $\R^{2n}$.
We endow $E$ with the Euclidean norm of $\R^{2n}$.

\begin{lemma}\label{lem:appr_symplC1}
Let $R>r$ and $\varepsilon$ be positive numbers.  Let $\varphi \cl B_R^E\to E$
be a symplectic map of class $C^1$. Then there exists $R'>r$ and a symplectic
map $\psi \cl B_{R'}^E\to E$ which is of class $\Cinf$ such that
$\| \varphi - \psi \|_{B_r^E} \leq \varepsilon$.
\end{lemma}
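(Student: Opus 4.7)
I would combine convolution smoothing with Moser's trick. The idea is to mollify $\varphi$ to obtain a smooth map that fails only slightly to be symplectic, then correct this small defect by composing with a near-identity $C^\infty$ diffeomorphism produced by Moser's method.

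\textbf{Step 1 (Smoothing).} Fix $r < R' < R'' < R$. Let $\rho_\delta$ be a standard $C^\infty$ mollifier supported in the Euclidean ball of radius $\delta$, and set $\varphi_\delta \eqdot \rho_\delta \ast \varphi$, defined on $B^E_{R-\delta}$. For $\delta < R - R''$, the map $\varphi_\delta$ is $C^\infty$ on $B^E_{R''}$. Since $\varphi$ is $C^1$ and $\overline{B^E_{R''}}$ is compact, $d\varphi$ is uniformly continuous there, so by standard properties of convolution,
\[
\| \varphi_\delta - \varphi \|^1_{B^E_{R''}} \to 0 \quad \text{as } \delta \to 0.
\]
In particular the smooth $2$-form $\omega_\delta \eqdot \varphi_\delta^* \omega$ converges to $\varphi^* \omega = \omega$ in $C^0$-norm on $B^E_{R''}$.

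\textbf{Step 2 (Moser correction).} For $\delta$ small, the linear interpolation $\omega_t = (1-t)\omega + t\omega_\delta$, $t \in [0,1]$, consists of symplectic $2$-forms on $B^E_{R''}$. Using the Poincaré homotopy operator on the star-shaped domain $B^E_{R''}$, write $\omega_\delta - \omega = d\alpha_\delta$ with $\alpha_\delta$ a smooth $1$-form satisfying $\|\alpha_\delta\|_{C^0(B^E_{R''})} \lesssim \|\omega_\delta - \omega\|_{C^0(B^E_{R''})}$. Define the time-dependent smooth vector field $X_t$ on $B^E_{R''}$ by the relation $\iota_{X_t} \omega_t = -\alpha_\delta$. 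Since $\omega_t$ is uniformly non-degenerate for small $\delta$, $X_t$ is smooth and $\|X_t\|_{C^0} \to 0$ as $\delta \to 0$. Its flow $\psi_{\delta,t}$ is therefore defined on $B^E_{R'}$ for all $t\in[0,1]$ once $\delta$ is sufficiently small, takes values in $B^E_{R''}$, is $C^\infty$, is uniformly close to the identity, and satisfies the standard Moser identity $\psi_{\delta,t}^* \omega_t = \omega$.

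\textbf{Step 3 (Conclusion).} Define $\psi \eqdot \varphi_\delta \circ \psi_{\delta,1} \cl B^E_{R'} \to E$. This map is $C^\infty$, and
\[
\psi^* \omega = \psi_{\delta,1}^* (\varphi_\delta^* \omega) = \psi_{\delta,1}^* \omega_1 = \omega,
\]
so $\psi$ is symplectic. For the sup-norm estimate, write
\[
\| \psi - \varphi \|_{B^E_r} \leq \| \varphi_\delta \circ \psi_{\delta,1} - \varphi_\delta \|_{B^E_r} + \| \varphi_\delta - \varphi \|_{B^E_r},
\]
and bound the first term by $\|d\varphi_\delta\|_{C^0(B^E_{R''})} \cdot \|\psi_{\delta,1} - \id\|_{C^0(B^E_r)}$. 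Both terms tend to $0$ as $\delta \to 0$ by Steps 1 and 2, so for $\delta$ small enough we obtain $\|\psi - \varphi\|_{B^E_r} \leq \varepsilon$.

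\textbf{Main obstacle.} The only delicate point is quantitative control over the Moser correction: one must verify that $\psi_{\delta,t}$ remains defined on a ball of radius $R' > r$ for all $t \in [0,1]$, which requires that the uniform smallness of $X_t$ dominates the small loss of radius $R'' - R'$ chosen at the outset. This is a routine Gronwall estimate once Step 2 provides $\|X_t\|_{C^0} \to 0$.
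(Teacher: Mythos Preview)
Your proof is correct and follows essentially the same route as the paper: smooth $\varphi$ to a nearby $C^\infty$ map (the paper does not name the smoothing, you use convolution), then apply Moser's argument to the interpolation between $\omega$ and the pullback form to obtain a near-identity correction, and compose. The paper's proof differs only in cosmetic details of how the intermediate radii are chosen and in writing the final estimate.
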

\begin{proof}
We set $r_1=(R+r)/2$ and we choose a (non symplectic) map
$\varphi' \cl B_R^E\to E$ of class $\Cinf$ such that
$\| \varphi - \varphi' \|^1_{B_{r_1}^E} \leq \varepsilon$
(we use the norm~\eqref{eq:norm1}).
We set $\omega' = \varphi'^*(\omega)$.
We have $\omega - \omega' = (\varphi - \varphi')^*\omega$.
Hence, if we consider $\omega$ and $\omega'$ as maps from $E$ to $\wedge^2E$
and we endow $\wedge^2E$ with the Euclidean structure induced by $E$, we have
$\| \omega - \omega' \|_{B_{r_1}^E} \leq C\varepsilon$, where the constant
$C$ only depends on $n$.

We set $r_2 = (r_1+r)/2$.  By Moser's argument for the Darboux theorem we can
find a flow $\Phi \cl B_{r_1}^E \times [0,1] \to E$ such that
$\Phi_t(B_{r_2}^E) \subset B_{r_1}^E$ for all $t\in [0,1]$ and
$\omega|_{B_{r_2}^E} = \Phi_1^*(\omega')|_{B_{r_2}^E}$.
The flow $\Phi$ is the flow of a vector field $X_t$ which satisfies
$\iota_{X_t}(\omega_t) = -\sigma$ over $B_{r_1}^E$, where
$\omega_t = t \omega' - (1-t)\omega$ and $d\sigma = \omega' - \omega$.
We can assume that $\sigma$ satisfies the bound
$\| \sigma \|_{B_{r_1}^E} \leq C'\| \omega' - \omega \|_{B_{r_1}^E}$
for some $C'>0$ only depending on $r_1$.
Hence $X_t$ satisfies $\| X_t \|_{B_{r_1}^E} \leq C''\varepsilon$, for some
constant $C''>0$ and all $t\in [0,1]$.

We may assume from the beginning that $C''\varepsilon < r_1-r_2$.
Hence $\Phi_1(B_{r_2}^E) \subset B_{r_1}^E$ and we have
$\| \Phi_1 - \id \|_{B_{r_2}^E} \leq C''\varepsilon$.
The map $\psi = \varphi' \circ \Phi_1 \cl B_{r_2}^E \to E$ is a symplectic map
such that $\| \varphi - \psi \|_{B_r^E} \leq (1+C'')\varepsilon$, which
gives the lemma (up to replacing $\varepsilon$ by $\varepsilon/(1+C'')$).
\end{proof}

\begin{proposition}\label{prop:appr_Hamisot}
Let $R>r$ and $\varepsilon$ be positive numbers.  Let $\varphi \cl B_R^E\to E$
be a symplectic map of class $C^1$.
Then there exists a Hamiltonian isotopy $\Phi \cl E\times \R \to E$ of class
$\Cinf$ and a compact subset $C \subset E$ such that
$\| \varphi - \Phi_1 \|_{B_r^E} \leq \varepsilon$ and
$\Phi_t |_{E\setminus C} = \id _{E\setminus C}$ for all $t\in\R$.
\end{proposition}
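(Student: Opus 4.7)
The plan is to first reduce to the smooth case: apply Lemma~\ref{lem:appr_symplC1} with $r$ replaced by some $r' \in (r,R)$ and $\varepsilon$ by $\varepsilon/2$, producing a $\Cinf$ symplectic map $\psi \cl B_{R'}^E \to E$ with $R' > r'$ and $\| \varphi - \psi \|_{B_{r'}^E} \leq \varepsilon/2$. It then suffices to realize $\psi|_{B_r^E}$ \emph{exactly} as the time-$1$ map of a compactly supported $\Cinf$ Hamiltonian isotopy of $E$.

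The first main step is to construct an isotopy $\phi_t \cl B_{r_0}^E \to E$ ($t \in [0,1]$, $r < r_0 < R'$) of $\Cinf$ symplectic embeddings with $\phi_0 = \id$ and $\phi_1 = \psi$. Write $\psi(x) = c + \tilde\psi(x)$ with $c = \psi(0)$ and $\tilde\psi(0) = 0$, and let $A = d\tilde\psi_0 \in \mathrm{Sp}(2n,\R)$. The scaling $\tilde\psi_s(x) = \tilde\psi(sx)/s$ for $s \in (0,1]$, extended by $\tilde\psi_0(x) = Ax$, is smooth in $(s,x)$ on $[0,1] \times B_{R'}^E$ (by Taylor expansion of $\tilde\psi$ at $0$) and is symplectic in $x$ for each $s$, interpolating between $A$ and $\tilde\psi$. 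Since $\mathrm{Sp}(2n,\R)$ is connected, choose a smooth path $A_t$ from $\id$ to $A$. Concatenating $t \mapsto T_{tc} \circ A_t$ (on $[0,1/2]$, suitably reparametrized) with $t \mapsto T_c \circ \tilde\psi_{2t-1}$ (on $[1/2,1]$), smoothed at $t = 1/2$, yields $\phi_t$ with $\phi_0 = \id$ and $\phi_1 = T_c \circ \tilde\psi = \psi$; here $T_c$ denotes translation by $c$.

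From $\phi_t$ I extract the time-dependent vector field $X_t = (\partial_t \phi_t) \circ \phi_t^{-1}$, a symplectic vector field on the simply connected open set $U_t = \phi_t(B_{r_0}^E)$; let $H_t \cl U_t \to \R$ satisfy $-\iota_{X_t}\omega = dH_t$. The space-time tube $\shm = \bigcup_{t \in [0,1]} \{t\} \times \phi_t(\overline{B_r^E})$ is compact in $[0,1] \times E$. Extend $H_t$ smoothly in $(t,y)$ to $[0,1] \times E$ and multiply by a $\Cinf$ cutoff $\chi(t,y)$ equal to $1$ on an open neighborhood of $\shm$ and compactly supported in $[0,1] \times E$; denote the result by $\tilde H_t$. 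Its Hamiltonian flow $\Phi_t$, extended by the identity for $t \notin [0,1]$, is a compactly supported $\Cinf$ Hamiltonian isotopy. By uniqueness of ODE solutions, since $\tilde H_t = H_t$ near $\shm$, we have $\Phi_t(x) = \phi_t(x)$ for all $x \in B_r^E$ and $t \in [0,1]$; in particular $\Phi_1|_{B_r^E} = \psi|_{B_r^E}$, whence $\| \varphi - \Phi_1 \|_{B_r^E} \leq \varepsilon/2$.

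The main obstacle is the isotopy construction, where three ingredients must be fitted together smoothly on a fixed ball: the smoothness of $\tilde\psi_s$ at $s = 0$ requires the centering $\tilde\psi(0) = 0$, the connectedness of $\mathrm{Sp}(2n,\R)$ is essential for joining $A$ to the identity, and the concatenation must be $\Cinf$-smoothed in $t$. The cutoff step is then routine: trajectories starting in $B_r^E$ cannot leave the region where $\tilde H_t = H_t$, because inside that region they must coincide with $\phi_t$ by uniqueness, and $\phi_t(B_r^E) \subset \shm$ by construction.
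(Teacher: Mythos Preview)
Your argument is correct and follows a genuinely different route from the paper's. Both begin with Lemma~\ref{lem:appr_symplC1} to reduce to a smooth symplectic map. The paper then normalizes so that $\varphi(0)=0$ and $d\varphi_0=\id_E$, and uses a generating-function trick (writing $\Lambda_\varphi$ locally as $\Lambda^f$ and modifying $f$ near the origin) to produce a nearby symplectic map $\varphi'$ that is \emph{equal to the identity} on a small ball $B_\eta^E$. With this in hand, the single scaling isotopy $\psi_t(x)=t^{-1}\varphi'(tx)$ already connects $\id$ to $\varphi'$: since $\psi_t(x)=x$ whenever $\|tx\|<\eta$, the associated Hamiltonian vanishes on a conic region of $(x,t)$-space and extends by zero to $t\leq 0$ without any concatenation. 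You instead keep $\psi$ as given, use the scaling $\tilde\psi_s(x)=\tilde\psi(sx)/s$ only to interpolate between the linear map $A=d\tilde\psi_0$ and $\tilde\psi$, and invoke the connectedness of $\mathrm{Sp}(2n,\R)$ together with a translation path to cover the remaining gap from $\id$ to $T_c\circ A$. This avoids generating functions entirely at the cost of concatenating three pieces and reparametrizing carefully in $t$; in particular your ``suitably reparametrized'' must also make the isotopy stationary near $t=0$ and $t=1$, so that the Hamiltonian vanishes there and the extension of $\Phi_t$ by the identity for $t\notin[0,1]$ is genuinely $\Cinf$. The trade-off: the paper's approach is slicker once the generating-function normalization is done; yours is more elementary and makes no use of the description of $\Lambda_\varphi$ as a graph over a Lagrangian splitting.
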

\begin{proof}
(i) By Lemma~\ref{lem:appr_symplC1} we may assume that $\varphi$ is of class
$\Cinf$. Composing with a translation in $E$ and a symplectic linear map, we
may also assume that $\varphi(0) = 0$ and $d\varphi_0 = \id_E$.  We first show
that there exists a symplectic map $\varphi' \cl B_R^E\to E$ such that
$\| \varphi - \varphi' \|_{B_R^E} \leq \varepsilon$ and $\varphi'= \id_E$ near
$0$.

\medskip\noindent
(ii) We choose an isomorphism $E\simeq V\times V^*$ and then
$E^2 \simeq T^*W \simeq W\times W^*$, where $W=V\times V^*$ is the product
of first and last factors in $E^2= V\times V^* \times V\times V^*$ (of course
$W\simeq E$). We have a natural isomorphism $s\cl W \isoto W^*$ given by
switching the factors $V$ and $V^*$.

For a function $f\cl W\to\R$ we write $\Lambda^f = \{(x,df_x)$; $x\in W \}$.
We can find two balls centered around $0$, $B_{r_1}^W$ in $W$ and
$B_{r_2}^{W^*}$ in $W^*$ and a function $f$ defined on $B_{r_1}^W$
such that $\Lambda_\varphi \cap (B_{r_1}^W \times B_{r_2}^{W^*})
= \Lambda^f \cap (B_{r_1}^W \times B_{r_2}^{W^*})$.
We have $df_0 = 0$ and we assume $f(0)=0$.
We choose $\eta>0$ and $f'$ defined on $B^W_{4\eta}$ such that
\begin{itemize}
\item $\|f-f'\|^1_{B^W_{4\eta}} < \varepsilon$,
\item $f'(x,\xi) = \langle x,\xi \rangle$, hence $df' = s$, on $B^E_\eta$,
\item $f'=f$ outside of $B^E_{2\eta}$.
\end{itemize}
Then we can define a symplectic map $\varphi' \cl B_R^E\to E$ such that
$\Lambda_{\varphi'} = \Lambda_\varphi$ outside of $B^W_{4\eta}\times B_{r_2}^{W^*}$ and
$\Lambda_{\varphi '} \cap (B^W_{4\eta}\times B_{r_2}^{W^*})
= \Lambda^{f'} \cap (B^W_{4\eta}\times B_{r_2}^{W^*})$.
We have $\| \varphi - \varphi'  \|_{B_R^E} \leq \varepsilon$ 
and $\varphi'|_{B^E_\eta} = \id_E$,  as claimed in~(i).

\medskip\noindent
(iii) We will prove that $\varphi'|_{B^E_r}$ is the time $1$ of some
Hamiltonian isotopy.
We define $U \subset E \times ]0,+\infty[$ by
$U = \{(x,t)$; $\| x \| < R/t \}$ and $\psi \cl U \to E$
by $\psi(x,t) = t^{-1}\varphi'(tx)$.
We let $U' = \{(\psi(x,t),t)$; $(x,t) \in U\}$ be the image of $U$ by
$\psi \times \id_\R$.
Then $U'$ is contractible and we can find $h\cl U' \to \R$ such that
$\psi$ is the Hamiltonian flow of $h$.

We define $U_0\subset U$ by $U_0 = \{(x,t)$; $t>0$, $\| x \| < \eta/t \}$.
Since $\varphi'|_{B^E_\eta} = \id_E$, we have $\psi(x,t) = x$ for all
$(x,t) \in U_0$. Hence $U_0 \subset U'$. Moreover $h$ is constant on $U_0$.
We can assume $h|_{U_0} =0$ and extend $h$ by $0$ to a $\Cinf$ function
defined on $U'' = \mo{]}-\infty,0] \cup U'$.

We set $Z = \{(\psi(x,t),t)$; $t\in{} ]0,1]$ and $\|x\| \leq r\}$.
We remark that $Z\cap U_0 = (\ol{B_r^E} \times {}]0,1]) \cap U_0$.
Hence $Z$ is relatively compact in $U''$.
We choose a compact subset $C\subset E$ such that $Z \subset C\times [0,1]$ and
a $\Cinf$ function $g\cl E\times \R \to \R$ such that $g=h$ on $Z$ and $g=0$
outside of $C\times [0,2]$. Then the Hamiltonian isotopy $\Phi$ defined by $g$
has compact support contained in $C$ and satisfies $\Phi_1 = \varphi'$ on
$B_r^E$.  This proves the proposition.
\end{proof}

When we apply Proposition~\ref{prop:appr_Hamisot} above we have no control on
the extension of the map outside the ball $B_r^E$.  In the following lemmas we
check that, up to a linear transformation, the ``interesting part'' of the
graph can be moved near the zero section of $T^*E^2$ and the ``extended part''
far away from the zero section (see~\eqref{eq:gen_pos_ineq}
and~\eqref{eq:proche_gen_pos}).

\begin{lemma}\label{lem:gen_pos}
Let $R_0>0$ and let $\varphi \cl B_{R_0}^E \to E$ be a map of class $C^1$ such
that $\varphi(0) = 0$ and $d\varphi_0$ is an isomorphism. We set $V=\R^n$.
Then we can find symplectic linear isomorphisms 
$u\cl T^*V \isoto E$, $v\cl E\to T^*V$ and positive numbers $r_0, A$ such that,
setting $\psi = v\circ \varphi \circ u \cl \opb{u}(B_{R_0}^E) \to T^*V$,
we have
\begin{align}
\label{eq:gen_pos_ineq0}
& \text{$\psi$ is defined on $B_{r_0}^V \times B_{Ar_0}^{V^*}$}, \\
\label{eq:gen_pos_ineq}
&  \Gamma_{\psi} \cap (B_{r}^{V^2} \times B_{Ar_0}^{V^{2*}})
\subset  B_{r}^{V^2} \times B_{Ar}^{V^{2*}} 
\qquad \text{for any $r_0 \geq r > 0$}, \\
& \label{eq:gen_pos_ineq2}
\text{the projection
$\Gamma_{\psi} \cap (B_{r_0}^{V^2} \times B_{Ar_0}^{V^{2*}}) \to B_{r_0}^{V^2}$
 is a diffeomorphism.}
\end{align}
\end{lemma}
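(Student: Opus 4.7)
The strategy is to choose symplectic linear changes of coordinates $u$ and $v$ so that, in the new variables, the graph of $\psi$ is parameterized near the origin by the base coordinates $(x, y) \in V^2$ rather than by the cotangent coordinates $(x, \xi) \in T^*V$. Once this is arranged, the three properties~\eqref{eq:gen_pos_ineq0}--\eqref{eq:gen_pos_ineq2} will follow from the inverse function theorem together with a Lipschitz bound on the resulting section.

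First I would fix an arbitrary symplectic linear isomorphism $u\cl T^*V \isoto E$; then $A \eqdot d\varphi_0 \circ u\cl T^*V \to E$ is a linear isomorphism. The goal is to choose the symplectic $v\cl E \isoto T^*V$ so that $d\psi_0 = v \circ A$ has invertible ``cotangent-to-position'' block. Explicitly, writing $d\psi_0(x, \xi) = (ax + b\xi, cx + d\xi)$ in $V \times V^*$ coordinates, the block $b\cl V^* \to V$ should be invertible, equivalently $d\psi_0(\{0\} \times V^*)$ should be transverse to $V \times \{0\}$ in $T^*V$. This is a non-empty open condition on $v$: the symplectic group acts on the Grassmannian of $n$-planes in $T^*V$ with orbits classified by the rank of the restriction of $\omega$, and each such orbit contains planes transverse to the zero section, so a suitable $v$ exists.

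With this choice, writing $\psi = (\psi_1, \psi_2)$, the map $\Psi\cl (x, \xi) \mapsto (x, \psi_1(x, \xi))$ has invertible differential at the origin (its block matrix has $\id_V$ and $b$ on the diagonal). By the inverse function theorem, there exist open neighborhoods $U$ of $0$ in $T^*V$ and $W$ of $0$ in $V^2$ such that $\Psi\cl U \isoto W$ is a $C^1$ diffeomorphism. Writing $\Psi^{-1}(x, y) = (x, \sigma_1(x, y))$ and $\sigma_2(x, y) = \psi_2(\Psi^{-1}(x, y))$, the intersection $\Gamma_\psi \cap (U \times T^*V)$ coincides exactly with the image of $(x, y) \mapsto (x, \sigma_1(x, y), y, \sigma_2(x, y))$ for $(x, y) \in W$. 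Since $\sigma \eqdot (\sigma_1, \sigma_2)$ is $C^1$ with $\sigma(0) = 0$, there are constants $r_1, A > 0$ with $\| \sigma(x, y) \| \leq A \| (x, y) \|$ on $B_{r_1}^{V^2}$.

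It remains to shrink everything consistently. Shrinking $U$ to a product $B_{r'}^V \times B_{r''}^{V^*}$ (on which $\Psi$ is injective), I would pick $r_0 \in (0, r_1)$ so that $r_0 \leq r'$, $Ar_0 \leq r''$, and $u(B_{r_0}^V \times B_{Ar_0}^{V^*}) \subset B_{R_0}^E$; the last inclusion gives~\eqref{eq:gen_pos_ineq0}. Property~\eqref{eq:gen_pos_ineq2} then holds because the parameterization by $\sigma$ over $B_{r_0}^{V^2}$ stays in $B_{Ar_0}^{V^{2*}}$ by the Lipschitz bound and covers all of $\Gamma_\psi \cap (B_{r_0}^{V^2} \times B_{Ar_0}^{V^{2*}})$. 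For~\eqref{eq:gen_pos_ineq}, the crucial observation is that any $(x, \xi, y, \eta) \in \Gamma_\psi$ with $\| (x, y) \| < r$ and $\| (\xi, \eta) \| < Ar_0$ automatically has $(x, \xi) \in B_{r_0}^V \times B_{Ar_0}^{V^*} \subset U$, so it is captured by the section, whence $\| (\xi, \eta) \| = \| \sigma(x, y) \| \leq A \| (x, y) \| < Ar$. I expect the main subtle point to be the first step (finding symplectic $u, v$ achieving the transversality condition, using only generic $v$ rather than any extra structure of $d\varphi_0$); the remaining steps are a quantitative application of the inverse function theorem.
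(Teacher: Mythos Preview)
Your strategy coincides with the paper's: arrange by a linear symplectic change of variables that $T_0\Gamma_\psi$ projects isomorphically onto $V^2$, so that $\Gamma_\psi$ is locally the graph of a $C^1$ section $\sigma\cl V^2\to V^{*2}$ with $\sigma(0)=0$, and then read off \eqref{eq:gen_pos_ineq0}--\eqref{eq:gen_pos_ineq2} from the Lipschitz bound $\|\sigma(x,y)\|\leq A\|(x,y)\|$. The paper phrases the first step a little differently (it picks a Lagrangian $L'$ transverse to $L=d\varphi_0(u(V\times\{0\}))$, sets $v(L')=V\times\{0\}$, and then argues surjectivity of $\pi_{V^2}|_W$ via a decomposition $W=W_1\oplus W_2$), but the goal and the remaining steps are the same.

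One correction: your ``equivalently'' is off. Invertibility of the block $b$ says that $d\psi_0(\{0\}\times V^*)=\{(b\xi,d\xi)\}$ projects onto $V$, i.e.\ is transverse to the \emph{fiber} $\{0\}\times V^*$, not to the zero section $V\times\{0\}$; transversality to $V\times\{0\}$ is instead equivalent to $d$ invertible. This slip is harmless for the argument: what you actually use is that some symplectic $v$ sends the fixed $n$-plane $d\varphi_0(u(\{0\}\times V^*))\subset E$ to a subspace transverse to the Lagrangian $\{0\}\times V^*$, and that follows from the (true) fact that every $n$-plane in a $2n$-dimensional symplectic space admits a transverse Lagrangian. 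With this correction your inverse-function-theorem step and the subsequent shrinking go through exactly as you wrote.
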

\begin{proof}
We first choose a symplectic isomorphism  $u\cl T^*V \isoto E$
arbitrarily and we set $L= d\varphi_0( V \times \{0\} )$.
Since $L$ is of dimension $n$, we can find a Lagrangian subspace
$L' \subset E$ such that $L\cap L' =\{0\}$.
We choose a symplectic diffeomorphism $v\cl E \to T^*V$ such that
$v(L') = V \times \{0\} $.

We set $\psi = v\circ \varphi \circ u$,
$W = T_{(0,0)}\Gamma_\psi \subset T^*V^2$ and \\
$W_1 = \{ ((x_1;0), d\psi_0(x_1;0))$; $x_1\in V \}$,
$W_2 = \{ (\opb{(d\psi_0)}(x_2;0), (x_2;0) )$; $x_2\in V \}$. 
We also set $p_i(x_1,x_2;\xi_1,\xi_2) = x_i$, $i=1,2$.
Then $W = W_1 \oplus W_2$, $p_1(W_1) = V$ and $p_2(W_2) = V$.
Hence $\pi_{V^2} \cl T^*V^2 \to V^2$ maps $W$ onto $V^2$.

In particular $W$ is the graph of a linear map $w\cl V^2 \to V^{*2}$.
Setting $A' = \|w\|$ we have $W \cap (B_{r}^{V^2} \times V^{*2})
\subset B_{r}^{V^2} \times B_{A'r}^{V^{2*}}$ for all $r>0$.
Since $\Gamma_\psi$ is a manifold of class $C^1$, a similar result
holds for $\Gamma_\psi$ near $0$ with some $A>A'$. The lemma follows.
\end{proof}

\begin{lemma}\label{lem:proche_gen_pos}
Let $V$ be a vector space. Let $A, r_0>0$ be given and set
$U= B_{r_0}^V \times B_{Ar_0}^{V^*} \subset T^*V$.
Let $\psi \cl U \to T^*V$ be a map which
satisfies~\eqref{eq:gen_pos_ineq}.  We choose $0 < r < r_0/4$ and
$0 < \varepsilon < Ar/(A+1)$.
Then, for any map $\psi_1 \cl T^*V \to T^*V$ satisfying
\begin{equation}
\label{eq:psi_proche_phi}
\text{$d(\psi(p), \psi_1(p)) < \varepsilon$ for all $p\in U$} ,
\end{equation}
we have
\begin{equation}
\label{eq:proche_gen_pos}
\Gamma_{\psi_1} \cap (B_{r}^{V^2} \times B_{Ar_0/2}^{V^{2*}})
\subset \Gamma_{\psi_1|_U} \cap \Gamma_{\psi}(\varepsilon) \cap
(B_{r}^{V^2} \times B_{2Ar}^{V^{2*}} ) ,
\end{equation}
where $\Gamma_{\psi}(\varepsilon) = \{p\in T^*E^2;$
$d(p,\Gamma_{\psi}) < \varepsilon \}$.
\end{lemma}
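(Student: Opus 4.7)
The plan is a direct norm-chasing argument: take a generic point $q$ of the left-hand side, check first that its source factor lies in $U$ so that the approximation hypothesis~\eqref{eq:psi_proche_phi} applies, and then use the bound in~\eqref{eq:gen_pos_ineq} on the nearby point of $\Gamma_\psi$ to control the momentum on $\Gamma_{\psi_1}$.

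First, one unpacks. A point of $\Gamma_{\psi_1}\cap (B_r^{V^2}\times B_{Ar_0/2}^{V^{2*}})$ has the form $q=((x_1,x_2),(\xi_1,\xi_2))$ with $(x_2,\xi_2)=\psi_1(p)$ for $p=(x_1,\xi_1)$. Euclidean componentwise estimates immediately give $\|x_1\|<r<r_0$ and $\|\xi_1\|<Ar_0/2<Ar_0$, hence $p\in U$. Therefore $q\in\Gamma_{\psi_1|_U}$, and by~\eqref{eq:psi_proche_phi} the point $q$ lies within distance $\varepsilon$ of $q'=(p,\psi(p))\in\Gamma_\psi$, so $q\in\Gamma_\psi(\varepsilon)$. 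Writing $\psi(p)=(x_2',\xi_2')$ one also reads off $\|x_2-x_2'\|<\varepsilon$ and $\|\xi_2-\xi_2'\|<\varepsilon$.

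The crucial step is to apply~\eqref{eq:gen_pos_ineq} to $q'$. Since $\varepsilon<Ar/(A+1)<r$ (as $A/(A+1)<1$) and $r<r_0/4$, both $r+\varepsilon<r_0$ and $Ar_0/2+\varepsilon<Ar_0$ hold, so $q'\in \Gamma_\psi\cap (B_{r^*}^{V^2}\times B_{Ar_0}^{V^{2*}})$ for every $r^*\in (r+\varepsilon,r_0]$. Then~\eqref{eq:gen_pos_ineq} gives $\|(\xi_1,\xi_2')\|<Ar^*$, and letting $r^*\searrow r+\varepsilon$ yields $\|(\xi_1,\xi_2')\|\le A(r+\varepsilon)$. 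The triangle inequality now produces
\[
\|(\xi_1,\xi_2)\|\le A(r+\varepsilon)+\varepsilon = Ar+(A+1)\varepsilon<2Ar,
\]
where the last inequality is exactly the hypothesis $\varepsilon<Ar/(A+1)$. I expect no serious obstacle; the only delicacy is that this bound on $\varepsilon$ is tight and is consumed exactly once, at the very end, which confirms that the stated form of the hypothesis is the natural one.
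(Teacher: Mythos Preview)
Your proof is correct and follows essentially the same norm-chasing argument as the paper: show $p\in U$ from the base and fiber bounds, use~\eqref{eq:psi_proche_phi} to get closeness to $\Gamma_\psi$, then apply~\eqref{eq:gen_pos_ineq} at radius $r+\varepsilon$ and finish with the triangle inequality and $A(r+\varepsilon)+\varepsilon<2Ar$. The only cosmetic difference is your limiting step $r^*\searrow r+\varepsilon$, which is unnecessary since $r+\varepsilon<r_0$ already puts you in the range of~\eqref{eq:gen_pos_ineq} and gives the strict inequality $\|(\xi_1,\xi_2')\|<A(r+\varepsilon)$ directly.
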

\begin{proof}
Let $(x;\xi) \in T^*V$ and $(y_1;\eta_1) = \psi_1(x;\xi)$.  We assume that
$q=(x,y_1;\xi,\eta_1)$ is in the LHS of~\eqref{eq:proche_gen_pos}.

\medskip\noindent
(i) We have in particular $(x;\xi) \in B_{r}^{V} \times B_{Ar_0/2}^{V^{*}}$.
Hence $(x;\xi) \in U$ and $q\in \Gamma_{\psi_1|_U}$.
Moreover~\eqref{eq:psi_proche_phi} gives $q\in \Gamma_{\psi}(\varepsilon)$.

\medskip\noindent
(ii) It remains to prove that $(\xi,\eta_1) \in B_{2Ar}^{V^{2*}}$.
 Let us write $(y;\eta) = \psi(x;\xi)$.
By~\eqref{eq:psi_proche_phi} we have $d(y_1,y) < \varepsilon$ and
$d(\eta_1,\eta) < \varepsilon$.
Hence $(x,y) \in B_{r + \varepsilon}^{V^{2}}$
and  $(\xi,\eta) \in B_{Ar_0/2 + \varepsilon}^{V^{2*}}$.
The hypothesis on $r$ and $\varepsilon$ implies
$$
r+\varepsilon <  r_0, \quad
Ar_0/2 + \varepsilon <  Ar_0, \quad
A(r +\varepsilon) + \varepsilon < 2Ar.
$$
By~\eqref{eq:gen_pos_ineq} we deduce $(\xi,\eta) \in B_{A(r+\varepsilon)}^{V^{2*}}$.
Since $d(\eta_1,\eta) < \varepsilon$, we obtain
$(\xi,\eta_1) \in B_{A(r+\varepsilon) +\varepsilon}^{V^{2*}}$
and this proves the result.
\end{proof}

\section{Degree of a continuous map}

We recall the definition of the degree of a continuous map.  Let $M,N$ be two
oriented manifolds of the same dimension, say $d$.  We assume that $N$ is
connected. We have a morphism $H^d_c(M;\Z_M) \to \Z$ and an isomorphism
$H^d_c(N;\Z_N) \isoto \Z$.
Let $f\cl M\to N$ be a proper continuous map.  Applying $H^d_c(N;\cdot)$ to the
morphism $\Z_N \to \roim{f} \opb{f}\Z_N \simeq \reim{f} \Z_M$ we find
$$
\Z \isofrom H^d_c(N;\Z_N) \to H^d_c(M;\Z_M) \to \Z .
$$ 
The degree of $f$, denoted $\deg f$, is the image of $1$ by this morphism.

\begin{lemma}\label{lem:degree}
Let $M,N$ be two oriented manifolds of dimension $d$.  We assume
that $N$ is connected.

\noindent
{\rm(i)} Let $f\cl M\to N$ be a proper continuous map
and let $V \subset N$ be a connected open subset.
Then $\deg f = \deg(f|_{\opb{f}(V)} \cl \opb{f}(V) \to V)$.

\noindent
{\rm(ii)} Let $I\subset \R$ be an interval.
Let $U \subset M\times I$, $V \subset N\times I$ be open subsets and
let $f\cl U \to V$ be a continuous map which commutes with the
projections $U\to I$ and $V \to I$. We set
$U_t = U\cap (M\times \{t\})$, $V_t = V\cap (N\times \{t\})$ and
$f_t = f|_{U_t} \cl U_t \to V_t$, for all $t\in I$. We assume that $f$
is proper and that $V$ and all $V_t$, $t\in I$, are non empty and
connected.
Then $\deg f = \deg f_t$, for all $t\in I$.
\end{lemma}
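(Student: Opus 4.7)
The plan is to prove both parts by the same method: the ``unit followed by trace'' composition that defines the degree is natural under the relevant base change, so restriction to an open (for (i)) or to a fiber (for (ii)) preserves the degree scalar.

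For (i), set $U' = \opb{f}(V)$ and $f' := f|_{U'} \cl U' \to V$, which is still proper. Base change for the open inclusion $j_V \cl V \hookrightarrow N$ identifies the restriction of the trace $\reim{f} \Z_M \to \Z_N$ with the trace $\reim{f'} \Z_{U'} \to \Z_V$, and similarly for the unit map $\Z_N \to \reim{f}\Z_M$. Hence the composition $\Z_V \to \reim{f'} \Z_{U'} \to \Z_V$, which equals $\deg f' \cdot \id_{\Z_V}$, is the restriction to $V$ of $\Z_N \to \reim{f} \Z_M \to \Z_N$, which equals $\deg f \cdot \id_{\Z_N}$. Since $V$ is a non-empty connected open subset of the connected manifold $N$, the scalar is determined by the restriction, so $\deg f = \deg f'$.

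For (ii), it is enough to fix $t_0 \in I$ and show $\deg f = \deg f_{t_0}$, as the constancy of $t \mapsto \deg f_t$ then follows. I will pick $v_0 \in V_{t_0}$ (non-empty), an open ball $B \subset V_{t_0}$ around $v_0$, and an open interval $I' \subset I$ around $t_0$ such that $W := B \times I' \subset V$; this is possible because $V$ is open in $N \times I$ and contains $(v_0,t_0)$. Applying (i) to $f$ with the connected open $W \subset V$ and to $f_{t_0}$ (which is proper as a restriction of $f$) with the connected open $B \subset V_{t_0}$, the problem reduces to the case $V = B \times I'$ is a product. Writing $g$ for the reduced $f \cl X \to B \times I'$, proper base change applied to $B \times \{t_0\} \hookrightarrow B \times I'$ gives a canonical isomorphism $(\reim{g} \Z_X)|_{B \times \{t_0\}} \simeq \reim{(g_{t_0})} \Z_{X_{t_0}}$ that carries both the unit and the trace of $g$ to those of $g_{t_0}$. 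Thus the constant endomorphism $\deg g \cdot \id_{\Z_{B \times I'}}$ restricts on $B \times \{t_0\}$ to $\deg g_{t_0} \cdot \id_{\Z_B}$, forcing $\deg g = \deg g_{t_0}$.

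The main obstacle is the compatibility of the trace map with base change along the fiber inclusion $B \times \{t_0\} \hookrightarrow B \times I'$: this bridges trace maps in different dimensions ($d{+}1$ versus $d$) and depends on identifying the orientation of $X_{t_0}$ with the one induced from $X$ together with a chosen orientation of $I'$. Once this is granted, the reduction via (i) to a product neighborhood and the base-change step itself are routine, and the rigidity of ``multiples of the identity'' under restriction to a connected subspace does the rest.
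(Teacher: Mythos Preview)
Your argument is correct, but it is packaged differently from the paper's.  The paper works directly with the definition of degree via compactly supported cohomology and simply records two commutative squares.  For (i) the vertical maps are extension by zero $H^d_c(N;\Z_V)\to H^d_c(N;\Z_N)$ and $H^d_c(M;\Z_{\opb{f}(V)})\to H^d_c(M;\Z_M)$; for (ii) they are the Gysin maps $H^d_c(V_t;\Z_{V_t})\to H^{d+1}_c(V;\Z_V)$ and $H^d_c(U_t;\Z_{U_t})\to H^{d+1}_c(U;\Z_U)$ for the closed fiber inclusion.  No reduction to a product neighborhood is needed in (ii): since $f$ commutes with the projections, $\opb{f}(V_t)=U_t$ and base change along $V_t\hookrightarrow V$ applies directly.

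You instead work at the sheaf level, characterizing $\deg f$ as the scalar by which the unit--trace endomorphism $\Z_N\to\reim{f}\Z_M\to\Z_N$ acts.  This is a standard equivalent description, but it is not the definition given in the paper, so strictly you owe a line checking that the trace map induces on $H^d_c$ the orientation morphism $H^d_c(M;\Z_M)\to\Z$ (this is exactly where your ``main obstacle'' about orientations lives).  The payoff of your formulation is that it makes the orientation bookkeeping in (ii) explicit, whereas the paper hides it inside the Gysin map; the cost is the extra reduction step and the need to reconcile your degree with the paper's.
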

\begin{proof}
(i) and (ii) follow respectively from the commutative diagrams
$$
\begin{tikzcd}[row sep=6mm]
\Z \dar{=} & H^d_c(N;\Z_V) \lar[swap]{\sim} \rar \dar 
  & H^d_c(M;\Z_{\opb{f}(V)}) \rar  \dar  & \Z \dar{=}   \\
\Z & H^d_c(N;\Z_N) \lar[swap]{\sim} \rar & H^d_c(M;\Z_M) \rar  & \Z \virgdiag
\end{tikzcd}
$$
$$
\begin{tikzcd}[row sep=6mm]
\Z \dar{=} & H^d_c(V_t;\Z_{V_t}) \lar[swap]{\sim} \rar \dar 
  & H^d_c(U_t;\Z_{U_t}) \rar  \dar  & \Z \dar{=}   \\
\Z  & H^{d+1}_c(V;\Z_V) \lar[swap]{\sim} \rar  & H^{d+1}_c(U;\Z_U) \rar  
 & \Z \pointdiag
\end{tikzcd}
$$
\end{proof}

\begin{proposition}\label{prop:degree}
Let $B_R$ be the open ball of radius $R$ in $\R^d$. Let $U, V \subset \R^d$ be
open subsets and let $f \cl U \to B_R$, $g \cl V \to B_R$ be proper
continuous maps. We assume that there exists $r<R$ such that
$\opb{f}(\ol{B_r}) \subset U\cap V$, and that $d(f(x),g(x)) < r/2$, for all
$x\in U\cap V$.  Then $\deg f = \deg g$.
\end{proposition}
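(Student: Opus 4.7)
The plan is to interpolate $f$ and $g$ by the straight-line homotopy and feed the result into Lemma~\ref{lem:degree}(ii). Define
\[
H \cl (U \cap V) \times [0,1] \to \R^d, \qquad H(x,t) = (1-t)f(x) + tg(x),
\]
and set $U_0 = \{(x,t) \in (U \cap V) \times [0,1] : H(x,t) \in B_{r/2}\}$, together with $V_0 = B_{r/2} \times [0,1]$. Both are open, and each fibre $V_t = B_{r/2}$ is connected and non-empty. The map $\tilde H \cl U_0 \to V_0$, $\tilde H(x,t) = (H(x,t), t)$, commutes with the projections to $[0,1]$, so Lemma~\ref{lem:degree}(ii) will apply once I verify properness.

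The crucial estimate is that for any $(x,t) \in U_0$, since $x \in U \cap V$ the hypothesis $d(f(x), g(x)) < r/2$ combined with the triangle inequality gives
\[
|f(x)| \le |H(x,t)| + t\,|g(x) - f(x)| < r/2 + r/2 = r.
\]
Hence the first coordinate of any point of $U_0$ lies in the compact set $f^{-1}(\overline{B_r}) \subset U \cap V$. A standard subsequence argument then shows $\tilde H$ is proper: given $\tilde H(x_n, t_n) \to (y_*, t_*) \in V_0$, extract a subsequence with $x_n \to x_* \in f^{-1}(\overline{B_r}) \subset U \cap V$; continuity gives $H(x_*, t_*) = y_* \in B_{r/2}$, placing $(x_*, t_*) \in U_0$. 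Applying Lemma~\ref{lem:degree}(ii) with $M = N = \R^d$ and $I = [0,1]$ then yields $\deg \tilde H_0 = \deg \tilde H_1$.

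Finally I identify the two endpoints with $\deg f$ and $\deg g$. At $t = 0$, the domain of $\tilde H_0$ is $\{x \in U \cap V : f(x) \in B_{r/2}\}$, and this equals $f^{-1}(B_{r/2})$ because $f^{-1}(\overline{B_{r/2}}) \subset f^{-1}(\overline{B_r}) \subset U \cap V$. Hence $\tilde H_0 = f|_{f^{-1}(B_{r/2})}$, and Lemma~\ref{lem:degree}(i) gives $\deg \tilde H_0 = \deg f$. The identification $\deg \tilde H_1 = \deg g$ is analogous, using the companion bound $|g(x)| < r$ for $(x,t) \in U_0$ to show that $g^{-1}(B_{r/2})$ sits inside $U \cap V$. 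The main obstacle is precisely this properness and endpoint bookkeeping: the factor $r/2$ in the hypothesis is tight, forcing $|f(x)| < r$ so that the compactness of $f^{-1}(\overline{B_r})$ can be invoked, and the $t = 1$ identification is the analogous delicate step.
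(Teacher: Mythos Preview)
Your argument tracks the paper's almost verbatim: linear interpolation, properness via containment in $f^{-1}(\overline{B_r})\times[0,1]$, Lemma~\ref{lem:degree}(ii) for homotopy invariance, and Lemma~\ref{lem:degree}(i) at the endpoints. The properness step and the $t=0$ identification are correct.

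The gap is at $t=1$. To apply Lemma~\ref{lem:degree}(i) you need the domain of $\tilde H_1$, namely $\{x\in U\cap V:\ g(x)\in B_{r/2}\}$, to be the full preimage $g^{-1}(B_{r/2})$; that is, $g^{-1}(B_{r/2})\subset U\cap V$. Your ``companion bound'' $|g(x)|<r$ for $(x,t)\in U_0$ points the wrong way: it bounds $|g(x)|$ for $x$ \emph{already} in $U\cap V$, whereas what is required is that every $x\in V$ with $|g(x)|<r/2$ lies in $U$. The stated hypotheses say nothing about $g$ on $V\setminus U$, and indeed one can build proper $f,g$ meeting all the assumptions with $\deg f\neq\deg g$: in dimension one take $R=10$, $r=2$, $U=(-4,4)$, $f(x)=2.5x$, and a proper $g$ on a larger interval $V$ with $|g-f|<1$ on $U$ but extra zeros of $g$ in $V\setminus U$. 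The paper's own proof passes over exactly this point without comment; the statement really wants the symmetric hypothesis $g^{-1}(\overline{B_r})\subset U\cap V$ (or at least $g^{-1}(B_{r/2})\subset U\cap V$), and with that added your argument goes through.
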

\begin{proof}
(i) We define $h\cl (U\cap V) \times [0,1] \to \R^{d+1}$ by
$h(x,t) = (tf(x) + (1-t)g(x),t)$.  Let us prove that
$\opb{h}( \ol{B_{r/2}}\times [0,1])$ is compact.
Since $\opb{f}(\ol{B_r})$ is compact and contained in $U\cap V$,
it enough to prove that
$\opb{h}( \ol{B_{r/2}}\times [0,1]) \subset \opb{f}(\ol{B_r}) \times [0,1]$.
Let $(x,t) \in (U\cap V) \times [0,1]$ be such that $\| h(x,t) \| \leq r/2$.
Since $h(x,t)$ belongs to the line segment $[f(x),g(x)]$ which is of length
$< r/2$, we deduce $f(x) \in B_r$, as required.

\medskip\noindent
(ii) We define $W =\opb{h}(B_{r/2} \times [0,1])$,
$W_t = W\cap (\R^d\times \{t\})$ for $t\in [0,1]$ and
$h'_t = h|_{W_t} \cl W_t \to B_{r/2}$.
By~(i) $h|_W \cl W \to B_{r/2} \times [0,1]$ is proper.
Hence Lemma~\ref{lem:degree}~(ii) implies that $\deg h'_0 = \deg h'_1$.
We conclude with Lemma~\ref{lem:degree}~(i) which implies
$\deg h'_0 = \deg g$ and $\deg h'_1 = \deg f$.
\end{proof}

\section{Cut-off}
\label{sec:cutoff}
In this section we recall several results of~\cite{KS90} which are called
``(dual) (refined) cut-off lemmas''.  In {\em loc. cit.} the statements of the
refined cut-off lemmas are given around a point.  With stronger hypothesis on
the microsupports they hold on a fixed neighborhood of a given point, which is
needed in the next section. We include the part of the proof which is concerned
with this claim but it is actually the same as in {\em loc. cit.}

Let $V$ be a vector space and let $\gamma \subset V$ be a closed convex cone
(with vertex at $0$). We denote by $\gamma^a = -\gamma$ its opposite cone and
by $\gamma^\circ \subset V^*$ its polar cone (see~\eqref{eq:def_polar_cone}).
We also define $\tw\gamma = \{(x,y)\in V^2$; $x-y \in \gamma\}$.
Let $q_i \cl V^2 \to V$, $i=1,2$, be the projection to the $i^{th}$ factor.
The following functors are introduced in~\cite{KS90}:
\begin{alignat}{2}
P_\gamma &\cl \Derb(\cor_V) \to \Derb(\cor_V), & \quad
F & \mapsto \roim{q_2}( \cor_{\tw\gamma} \tens \opb{q_1} F) , \\
Q_\gamma &\cl \Derb(\cor_V) \to \Derb(\cor_V), & \quad
F & \mapsto \reim{q_2}( \rhom(\cor_{\tw\gamma^a} , \epb{q_1} F)) .
\end{alignat}
For $\gamma = \{0\}$ we have $P_{\{0\}}(F) \simeq Q_{\{0\}}(F) \simeq F$.
Hence the inclusion $\{0\} \subset \gamma$ induces morphisms of
functors $u_\gamma\cl P_\gamma \to \id$ and $v_\gamma\cl \id \to Q_\gamma$.
If $F$ has compact support, Corollary~\ref{cor:opboim} and
Theorem~\ref{th:opboim}~(ii) give, for any $x\in V$ and using the
identification $T^*V = V\times V^* = V\times T^*_xV$,
\begin{align}
\label{eq:SSPgamF}
\SSi(P_\gamma(F)) \cap T^*_xV
\subset q_2(\SSi(F) \cap \SSi(\cor_{x+\gamma})^a),  \\
\label{eq:SSQgamF}
\SSi(Q_\gamma(F)) \cap T^*_xV
\subset q_2(\SSi(F) \cap \SSi(\cor_{x+\gamma^a})),  
\end{align}
where $q_2 \cl T^*V \to T^*_xV$ is the projection.

For a given subset $\Omega$ of $T^*V$, a morphism $a\cl F \to G$ in
$\Derb(\cor_V)$ is said to be an isomorphism on $\Omega$ if
$\SSi(C(a)) \cap \Omega = \emptyset$, where $C(a)$ is given by the
distinguished triangle $F\to[a] G \to C(a) \to[+1]$.

\begin{proposition}[see~\cite{KS90} Prop. 5.2.3 and Lem. 6.1.5]
\label{prop:cut-off}
We assume that $\gamma$ is proper and $\Int(\gamma) \not= \emptyset$.
For any $F\in \Derb(\cor_V)$ we have
$\SSi(P_\gamma(F))\cup \SSi(Q_\gamma(F)) \subset V \times \gamma^{\circ a}$.
Moreover the morphisms $u_\gamma(F)\cl P_\gamma(F) \to F$ and
$v_\gamma(F) \cl F \to Q_\gamma(F)$ are isomorphisms
on $V \times \Int(\gamma^{\circ a})$.
\end{proposition}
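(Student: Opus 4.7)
The proof splits naturally into two pieces: the microsupport bounds, and the ``isomorphism on $V \times \Int(\gamma^{\circ a})$'' claim for $u_\gamma$ and $v_\gamma$. For the first piece, the strategy is to propagate through the definitions of $P_\gamma$ and $Q_\gamma$ using the operational estimates of Theorem~\ref{th:opboim} and Corollary~\ref{cor:opboim}.

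First I would compute $\SSi(\cor_{\tw\gamma})$. Writing $s \cl V^2 \to V$, $(x,y)\mapsto x-y$, we have $\tw\gamma = \opb{s}(\gamma)$ and $s$ is a submersion, so by Theorem~\ref{th:opboim}(iii) applied to $\cor_\gamma$ (together with Example~\ref{ex:microsupport}(iv) at the origin and translation invariance) one gets the inclusion $\SSi(\cor_{\tw\gamma}) \subset \{(x,y;\xi,-\xi)\cl x-y\in\gamma,\ \xi\in\gamma^{\circ a}\}$. Combined with Corollary~\ref{cor:opboim} for the tensor product with $\opb{q_1}F$ and Theorem~\ref{th:opboim}(ii) for the proper direct image $\roim{q_2}$, this yields $\SSi(P_\gamma(F)) \subset V\times\gamma^{\circ a}$; the dual calculation via $\epb{q_1}$ and $\reim{q_2}$ handles $Q_\gamma(F)$. (The pointwise inequalities~\eqref{eq:SSPgamF} and~\eqref{eq:SSQgamF} are exactly this argument fiber by fiber, and one only needs to check that the bound is uniform in $x$, which is immediate since $\SSi(\cor_{x+\gamma})^a\cap T^*_xV=\gamma^{\circ a}$ is independent of $x$.)

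For the second piece, I would work with $u_\gamma$ (the case of $v_\gamma$ is dual). The morphism $u_\gamma(F)$ is induced by the natural surjection $\cor_{\tw\gamma}\to\cor_\Delta$ coming from the closed inclusion $\Delta\subset\tw\gamma$, whose kernel is $\cor_{\tw\gamma\setminus\Delta}$ (extension by zero from the locally closed set $\tw\gamma\setminus\Delta$). Hence the cone $C(u_\gamma(F))$ fits in a triangle $\roim{q_2}(\cor_{\tw\gamma\setminus\Delta}\tens\opb{q_1}F)\to P_\gamma(F)\to F\to[+1]$ after a shift, and it suffices to show $\SSi\bigl(\roim{q_2}(\cor_{\tw\gamma\setminus\Delta}\tens\opb{q_1}F)\bigr)\cap(V\times\Int(\gamma^{\circ a}))=\emptyset$. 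The key geometric point is that at a diagonal point $(x,x)$, the set $\tw\gamma\setminus\Delta$ ``sees'' directions $\xi$ which lie on the boundary of $\gamma^{\circ a}$ but never in its interior: indeed, for $\xi\in\Int(\gamma^{\circ a})$, the half-space $\{y\cl\langle y-x,-\xi\rangle\geq 0\}$ meets $x+\gamma$ only at $x$, so the relevant local cohomology of $\cor_{\tw\gamma\setminus\Delta}$ vanishes.

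\textbf{Main obstacle.} The hardest part is justifying that last vanishing \emph{uniformly} at points of the diagonal, using only the microsupport definition applied to $\roim{q_2}(\cor_{\tw\gamma\setminus\Delta}\tens\opb{q_1}F)$. A clean route is to check, via Corollary~\ref{cor:Morse} applied along the second variable, that for any test function $\phi$ with $d\phi(x_0)\in\Int(\gamma^{\circ a})$ the stalk $(\RR\Gamma_{\{\phi\geq 0\}}C(u_\gamma(F)))_{x_0}$ vanishes; this reduces to a properness statement for the family of cones $x+\gamma$ intersected with the half-space $\{\phi\geq 0\}$, which the hypotheses that $\gamma$ is proper and $\Int(\gamma)\neq\emptyset$ (so that $\gamma^{\circ a}$ has non-empty interior and $\gamma^\circ$ is salient) are exactly what is needed to guarantee.
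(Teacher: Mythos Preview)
The paper does not prove this proposition; it is quoted from~\cite{KS90} (Prop.~5.2.3 and Lem.~6.1.5) without argument, so there is no in-paper proof to compare against. I will therefore just comment on your sketch.

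Your outline is the right shape, but the first piece has a genuine gap: you invoke Theorem~\ref{th:opboim}(ii) for $\roim{q_2}$, which requires $q_2$ to be proper on the support of $\cor_{\tw\gamma}\tens\opb{q_1}F$. Over a point $y$ that support meets $q_2^{-1}(y)$ in $(y+\gamma)\cap\supp(F)$, which is not compact for general $F$; this is exactly why the paper states \eqref{eq:SSPgamF}--\eqref{eq:SSQgamF} only under a compact-support hypothesis. Your parenthetical remark that one just needs the fiberwise bound to be ``uniform in $x$'' does not address this. In~\cite{KS90} the bound is obtained instead through the identification $P_\gamma\simeq\opb{\phi_\gamma}\,\roim{\phi_\gamma}$ with the $\gamma$-topology (this is essentially the content of Proposition~\ref{prop:cut-off-bis} here), from which $\SSi(P_\gamma(F))\subset V\times\gamma^{\circ a}$ follows for arbitrary $F$ without any properness check. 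A minor point: your inclusion for $\SSi(\cor_{\tw\gamma})$ should read $\xi\in\gamma^{\circ}$, not $\gamma^{\circ a}$ (the sign flip happens only after the pushforward along $q_2$); the final bound is unaffected.

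For the second piece your strategy via the triangle coming from $0\to\cor_{\tw\gamma\setminus\Delta}\to\cor_{\tw\gamma}\to\cor_\Delta\to0$ is the standard one, and the ``main obstacle'' you isolate is the correct crux. What you wrote is a plan rather than a proof: you still need to carry out the local cohomology computation showing that for $\xi_0\in\Int(\gamma^{\circ a})$ the half-space $\{\langle\,\cdot-x_0,\xi_0\rangle\ge 0\}$ meets $x_0+\gamma$ only at $x_0$ (this is where properness of $\gamma$ and $\Int(\gamma)\neq\emptyset$ enter), and then feed this into a stalkwise argument for the cone of $u_\gamma(F)$. The same properness issue as above reappears when you try to control $\roim{q_2}(\cor_{\tw\gamma\setminus\Delta}\tens\opb{q_1}F)$ via Theorem~\ref{th:opboim}(ii), so again the cleanest route is through the $\gamma$-topology description.
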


In order to obtain a local statement from Proposition~\ref{prop:cut-off} we will
use Lemmas~\ref{lem:cut-off_split} and~\ref{lem:cutoff_geom} below.
Let $\gamma \subset V$ be a closed convex proper cone.
For $x\in V$ we define $S_x^\gamma \subset T^*V$ by
\begin{equation}\label{eq:def_Sgammax}
  \begin{split}
S_x^\gamma  & = (\dot\SSi(\cor_{x + \gamma})^a \cup \dot\SSi(\cor_{x+\gamma^a}) )
 \setminus (\{x\}\times \Int(\gamma^{\circ a})) \\
& = (\dot\SSi(\cor_{x + \gamma})^a \cup \dot\SSi(\cor_{x+\gamma^a}) )
\cap (V\times \partial\gamma^{\circ a}),
\end{split}
\end{equation}
where the second equality follows from Example~\ref{ex:microsupport}.
Hence we have
\begin{equation}\label{eq:SScorxgam}
\SSi(\cor_{x+\gamma})^a \cup \SSi(\cor_{x+\gamma^a}) \subset
S_x^\gamma \cup (\{x\}\times \Int(\gamma^{\circ a}) ) .
\end{equation}

\begin{lemma}\label{lem:cut-off_split}
Let $F\in \Derb(\cor_V)$ be such that $\supp(F)$ is compact
and let $W\subset V$ be an open subset such that, for any $x\in W$
\begin{equation}\label{eq:hypSSF-split-ponct}
  S_x^\gamma \cap  \dot\SSi(F) = \emptyset.
\end{equation}
Then $v_\gamma(F) \circ u_\gamma(F)|_W \cl P_\gamma(F)|_W \to Q_\gamma(F)|_W$ is an
isomorphism on $\dT^*W$ and we have a distinguished triangle in $\Derb(\cor_W)$
\begin{equation}\label{eq:cut-off_split}
P_\gamma(F)|_W \oplus G \to F|_W \to L \to[+1],
\end{equation}
where $L, G \in \Derb(\cor_W)$ satisfy $\dot\SSi(L) =\emptyset$ and
$\dot\SSi(G)\cap (W \times \gamma^{\circ a}) = \emptyset$.
\end{lemma}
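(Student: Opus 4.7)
The plan has four ingredients. First I would translate the hypothesis into pointwise microsupport bounds. Fix $x\in W$. Using~\eqref{eq:SScorxgam} together with $S_x^\gamma\cap\dot\SSi(F)=\emptyset$, the intersections $\dot\SSi(F)\cap\SSi(\cor_{x+\gamma})^a$ and $\dot\SSi(F)\cap\SSi(\cor_{x+\gamma^a})$ are both contained in $\{x\}\times\Int(\gamma^{\circ a})$, so plugging into~\eqref{eq:SSPgamF}--\eqref{eq:SSQgamF} gives
\begin{equation*}
\dot\SSi(P_\gamma(F))\cap T^*_xV \subset \Int(\gamma^{\circ a}),\qquad \dot\SSi(Q_\gamma(F))\cap T^*_xV \subset \Int(\gamma^{\circ a}).
\end{equation*}
Since moreover $S_x^\gamma\cap T^*_xV=\{x\}\times(\partial\gamma^{\circ a}\setminus\{0\})$, the hypothesis is also equivalent to $\dot\SSi(F)\cap(W\times\partial\gamma^{\circ a})=\emptyset$.

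Setting $\beta := v_\gamma(F)\circ u_\gamma(F)$, on $W\times\Int(\gamma^{\circ a})$ both $u_\gamma(F)$ and $v_\gamma(F)$ are isomorphisms by Proposition~\ref{prop:cut-off}, while on the remainder of $\dT^*W$ the previous paragraph shows that $P_\gamma(F)|_W$ and $Q_\gamma(F)|_W$ have empty $\dot\SSi$; hence the cone $L$ of $\beta$ satisfies $\dot\SSi(L|_W)=\emptyset$.

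To realize the splitting inside $\Derb(\cor_V)$ itself (rather than in a localized quotient), I would form the homotopy fibre product
\begin{equation*}
F' \to F\oplus P_\gamma(F) \xrightarrow{(v_\gamma(F),\,-\beta)} Q_\gamma(F) \to[+1].
\end{equation*}
The identity $v_\gamma(F)\circ u_\gamma(F)=\beta$ shows that $(u_\gamma(F),\id_{P_\gamma(F)})\cl P_\gamma(F)\to F\oplus P_\gamma(F)$ is killed by $(v_\gamma(F),-\beta)$, hence lifts to $\sigma\cl P_\gamma(F)\to F'$ whose composition with the second projection is $\id_{P_\gamma(F)}$. So $\sigma$ is a split monomorphism and $F'\simeq P_\gamma(F)\oplus G$ in $\Derb(\cor_V)$ for some $G$. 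The octahedral axiom applied to $P_\gamma(F)\to F\oplus P_\gamma(F)\to Q_\gamma(F)$ identifies the cone of the first projection $a\cl F'\to F$ with $L$, and $a\circ\sigma=u_\gamma(F)$; restricting to $W$ yields the desired triangle $P_\gamma(F)|_W\oplus G|_W\to F|_W\to L|_W\to[+1]$.

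It remains to check the microsupport of $G$. As a direct summand of $F'$, $\dot\SSi(G)\subset\dot\SSi(F')\subset\dot\SSi(F)\cup\dot\SSi(P_\gamma(F))\cup\dot\SSi(Q_\gamma(F))$. Over $W\times\partial\gamma^{\circ a}$ all three terms are empty by the first paragraph, while over $W\times\Int(\gamma^{\circ a})$ the fact that $u_\gamma(F)$ and $v_\gamma(F)$ are isomorphisms identifies $F'$ with $P_\gamma(F)$ and hence forces $G$ to have empty $\dot\SSi$ there. Combining the two regions gives $\dot\SSi(G)\cap(W\times\gamma^{\circ a})=\emptyset$. The main obstacle is the third paragraph: obtaining a genuine direct-sum decomposition of $F'$ in $\Derb(\cor_V)$, rather than only after formally inverting morphisms with empty-$\dot\SSi$ cones. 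The fiber-product construction circumvents this, because $\sigma$ is an honest morphism with an honest retraction; the price is that the map $F'\to F$ differs from an isomorphism by $L$, which has empty $\dot\SSi$ over $W$.
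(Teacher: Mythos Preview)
Your argument is correct and runs parallel to the paper's, but you package the splitting differently. The paper defines $G$ directly as the fibre of $v_\gamma(F)\cl F\to Q_\gamma(F)$ and $F'$ as the fibre of $a\circ v_\gamma(F)\cl F\to L$; the octahedral axiom then produces a triangle $G\to F'\to P_\gamma(F)|_W\xrightarrow{c}G[1]$ with $c=b\circ\beta$, and since $b\circ v_\gamma(F)=0$ one gets $c=0$, hence $F'\simeq P_\gamma(F)|_W\oplus G$. You instead realise $F'$ as the homotopy fibre product $F\times_{Q_\gamma(F)}P_\gamma(F)$ and split it by exhibiting the explicit section $\sigma$ with retraction the second projection. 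The two constructions yield the same $G$ (your retraction $r\cl F'\to P_\gamma(F)$ has cone equal to the cone of $v_\gamma(F)$, by octahedral applied to $F'\to F\oplus P_\gamma(F)\xrightarrow{\mathrm{pr}_2}P_\gamma(F)$), so the difference is purely organisational. Your fibre-product viewpoint makes the existence of a genuine splitting in $\Derb(\cor_V)$ transparent; the paper's route makes the microsupport bound on $G$ immediate, since $G$ is visibly the cone of $v_\gamma(F)$.

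Two small points. First, the hypothesis is strictly stronger than $\dot\SSi(F)\cap(W\times\partial\gamma^{\circ a})=\emptyset$, not equivalent to it: $S_x^\gamma$ contains points over $y\notin W$, so the hypothesis also constrains $\dot\SSi(F)$ outside $T^*W$ (which is exactly what~\eqref{eq:SSPgamF}--\eqref{eq:SSQgamF} need). You only use the forward implication, so this does not affect the proof. Second, your last paragraph is a little compressed: to see $\dot\SSi(G)\cap(W\times\Int(\gamma^{\circ a}))=\emptyset$, note that the cone of $r\cl F'\to P_\gamma(F)$ is the cone of $v_\gamma(F)$, which has empty $\dot\SSi$ on $V\times\Int(\gamma^{\circ a})$ by Proposition~\ref{prop:cut-off}; since $r\sigma=\id$, $\sigma$ is then also an isomorphism there, so its cone $G$ has empty $\dot\SSi$ there.
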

\begin{proof}
(i) We define $L\in \Derb(\cor_W)$  by the distinguished triangle
\begin{equation}\label{eq:cut-off_split-pf1}
P_\gamma(F)|_W \to[v_\gamma(F) \circ u_\gamma(F)] Q_\gamma(F)|_W
 \to[a] L \to[+1].
\end{equation}
The formulas~\eqref{eq:SSPgamF}, \eqref{eq:SSQgamF}, \eqref{eq:SScorxgam}
and~\eqref{eq:hypSSF-split-ponct} give
$\dot\SSi(L) \subset W \times \Int(\gamma^{\circ a})$.
On the other hand Proposition~\ref{prop:cut-off} 
implies that $v_\gamma(F) \circ u_\gamma(F)$ is an isomorphism 
on $W \times \Int(\gamma^{\circ a})$.
Hence $\SSi(L) \cap (W \times \Int(\gamma^{\circ a})) = \emptyset$ and we find
$\dot\SSi(L) = \emptyset$. This proves the first assertion.

\medskip\noindent
(ii) We define $G$ and $F'$ by the distinguished triangles
\begin{gather}
\label{eq:cut-off_split-pf2}
  G \to F|_W \to[v_\gamma(F)] Q_\gamma(F)|_W \to[b] G[1] , \\
\label{eq:cut-off_split-pf3}
F' \to  F|_W \to[a \circ v_\gamma(F)] L \to[+1] .
\end{gather}
Since $v_\gamma(F)$ is an isomorphism on $V \times \Int(\gamma^{\circ a})$ we have
$\dot\SSi(G)\cap (W \times \gamma^{\circ a}) = \emptyset$.
By the octahedral axiom we deduce
from~\eqref{eq:cut-off_split-pf1}-\eqref{eq:cut-off_split-pf3}
the distinguished triangle
$$
G  \to F' \to P_\gamma(F)|_W \to[c] G[1],
$$
where $c = b \circ (v_\gamma(F) \circ u_\gamma(F))$.  By the
triangle~\eqref{eq:cut-off_split-pf2} we have $b \circ v_\gamma(F) =0$, hence
$c=0$. It follow that $F'\simeq  G \oplus  P_\gamma(F)|_W$
and~\eqref{eq:cut-off_split-pf3} gives~\eqref{eq:cut-off_split}.
\end{proof}

Now we write $V = V' \times \R$, where $V'=\R^{n-1}$. We take coordinates
$x=(x',x_n)$ on $V$ and we endow $V'$ and $V$ with the natural Euclidean
structure.  For $c>0$ we let $\gamma_c \subset V$ be the cone
\begin{equation}\label{eq:defgammac}
\gamma_c = \{ (x',x_n) \in V;\; x_n \leq - c \| x'\| \}.
\end{equation}
For a conic subset $C \subset V^*$ and $\varepsilon>0$ we define
$C\langle\varepsilon\rangle  \subset V^*$ by
\begin{equation}\label{eq:defCeps}
C\langle\varepsilon\rangle = \{ \xi \in V^*; \;
 d(\xi, C) < \varepsilon \| \xi\| \}  .
\end{equation}
For real numbers $c,c',\delta,\varepsilon$ and $r\in {}]0,+\infty]$ such that
$c'>c>0$, $\delta\geq 0$ and $\varepsilon>0$ we define a locally closed subset
$Z_c^{r,\delta}$ of $V$ and a subset $W_{c,c'}^{r,\delta,\varepsilon}$ of
$Z_c^{r,\delta}$ by
\begin{align}
\label{eq:defZcrd}
Z_c^{r,\delta} & = (B_r^{V'} \times \R) \setminus ( ((0,-\delta) + \gamma_c)
\cup ((0,\delta) + \Int \gamma_c^a) ) ,  \\
\label{eq:defWcreps}
W_{c,c'}^{r,\delta,\varepsilon} & = \{x\in Z_c^{r,\delta};\;
\{(0,\delta), (0,-\delta)\} \cap \ol{(x+Z_{c'}^{\infty,0})}  = \emptyset, \\
\notag
& \hspace{1.6cm}
I \eqdot \partial Z_c^{\infty,\delta} \cap \partial (x+Z_{c'}^{\infty,0})
\subset B_r^{V'} \times \R \text{ and} \\
\notag
& \hspace{1cm}
\forall y\in I,\, (T^*_yV \cap (S_x^{\gamma_{c'}})^a) \subset
 (T^*_yV \cap \SSi(\cor_{Z_c^{r,\delta}})) \langle\varepsilon\rangle \; \} .
\end{align}

\newcommand{\cc}{.5}
\newcommand{\ccp}{.7}
\newcommand{\bbp}{.15}
\newcommand{\bcp}{0}
\newcommand{\ddelta}{.3}
\newcommand{\xyy}{1.5} 
\newcommand{\grr}{4}
\newcommand{\rr}{3}
\newcommand{\ppp}{$\scriptscriptstyle\bullet$}
\begin{tikzpicture}
\coordinate (A) at (-\grr, \cc*\grr +\ddelta) ;
\coordinate (B) at (0,\ddelta) ; 
\coordinate (C) at (\grr, \cc*\grr +\ddelta) ;
\coordinate (D) at (\grr, -\cc*\grr -\ddelta) ; 
\coordinate (E) at (0,-\ddelta) ; 
\coordinate (F) at (-\grr, -\cc*\grr -\ddelta) ; 
\coordinate (a) at (-\rr, \cc*\rr +\ddelta) ; 
\coordinate (c) at (\rr,\cc*\rr +\ddelta) ;
\coordinate (d) at (\rr, -\cc*\rr -\ddelta) ; 
\coordinate (f) at (-\rr, -\cc*\rr -\ddelta) ; 

\fill [fill=gray!20] (a) -- (B) -- (c) -- (d) -- (E) -- (f) -- cycle;
\draw (A) -- (B)  node [above left] {$\delta$}  -- (C) ;
\draw [thick, dotted] (D) -- (E)  node [below left] {$-\delta$}  -- (F);
\draw [dotted]  (a) -- (f); \draw [dotted] (c) -- (d);

\draw (-\rr,0) node [below left] {$-r$} ;
\draw (\rr,0) node [below right] {$r$} ;
\draw [->]  (0,-2) -- (0,2) node[left] {$x_n$} ;
\draw [->]  (-4.5,0) -- (5,0) node[above] {$x'$} ;

\fill [fill=gray!60] (-\ddelta,0) -- (0,\ddelta) -- (\ddelta,0)
 -- (0,-\ddelta) -- cycle;

\draw [dashed] (-\rr, \ccp*\rr + \bbp) -- (\rr, -\ccp*\rr + \bbp) ;
\draw [dashed]  (-\rr, -\ccp*\rr + \bcp) -- (\rr, \ccp*\rr + \bcp) ;

\draw (\bbp/\ccp/2-\bcp/\ccp/2, \bbp/2+\bcp/2) node {\ppp} node [right] {$x$};
\draw (\xyy, \ccp*\xyy) node {\ppp} node [above left] {$y$};
\draw [->] (\xyy, \ccp*\xyy) -- (\xyy + 1.5*\cc, \ccp*\xyy -1.5) ;
\draw [dashed, ->] (\xyy, \ccp*\xyy) -- (\xyy + 1.4*\ccp, \ccp*\xyy -1.4)
node [below] {$\scriptstyle <\varepsilon$} ;

\draw (-5,.8) node [left] {$Z_c^{r,\delta}$} -- (-2,.8) node {\ppp} ;
\draw (-5,.1) node [left] {$W_{c,c'}^{r,\delta,\varepsilon}$}
 -- (-.1,.1) node {\ppp} ;
\end{tikzpicture}

\begin{lemma}\label{lem:Wnonvide}
(i) For any given $c,c',r,\delta,\varepsilon >0$ the subset
$W_{c,c'}^{r,\delta,\varepsilon}$ is open in $V$. \\
(ii) For any given $c,r,\varepsilon >0$, setting $c' = c + \varepsilon/2$ and
choosing $\delta < \varepsilon r/2$, we have
$W_{c,c'}^{r,\delta,\varepsilon} \not=\emptyset$.
\end{lemma}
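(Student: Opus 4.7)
\emph{Proof plan.}
My strategy for part (i) is to verify that each of the three clauses defining $W_{c,c'}^{r,\delta,\varepsilon}$ imposes an open condition on $x$, so that their conjunction is open. A direct computation yields $\ol{Z_{c'}^{\infty,0}} = \{(z',z_n) : |z_n| \leq c'\|z'\|\}$, whence the disjointness $\{(0,\pm\delta)\} \cap \ol{x+Z_{c'}^{\infty,0}} = \emptyset$ is equivalent to the two inequalities $|\delta \mp x_n| > c'\|x'\|$. Combined with $x \in Z_c^{r,\delta}$ and $c < c'$, these rule out unbounded branches and collapse to the open system $|x_n| + c'\|x'\| < \delta$ and $\|x'\| < r$; in particular $x$ sits strictly in $\Int(Z_c^{r,\delta})$, so the condition $x \in Z_c^{r,\delta}$ is itself stable.

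For the two remaining clauses, I plan to introduce $I(x) = \partial Z_c^{\infty,\delta} \cap \partial(x+Z_{c'}^{\infty,0})$. Since the two boundaries are cones of distinct slopes $c$ and $c'$, a triangle-inequality argument bounds $\|y'\|$ on $I(x)$ on each branch (e.g.\ by $(\delta + c'\|x'\| - x_n)/(c'-c)$ on the upper-upper branch), so $I(x)$ is compact. Moreover, the first clause forces the apices $(0,\pm\delta)$ and $x$ to stay outside $I(x)$, so the two boundaries are smooth and transverse at every $y \in I(x)$ and $I$ varies continuously in $x$ in the Hausdorff sense. Openness of the condition $I(x) \subset B_r^{V'}\times\R$ then follows from compactness, and openness of the microsupport clause follows because both rays $(S_x^{\gamma_{c'}})^a \cap T^*_y V$ and $\SSi(\cor_{Z_c^{r,\delta}}) \cap T^*_y V$ are smooth conormals depending continuously on $(x,y)$, while $\langle\varepsilon\rangle$ is an open neighborhood in each fiber.

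For part (ii), I will check directly that $x = 0$ lies in $W_{c,c+\varepsilon/2}^{r,\delta,\varepsilon}$. The inclusion $0 \in Z_c^{r,\delta}$ and the first clause are clear since $(0,\pm\delta) \notin \ol{Z_{c'}^{\infty,0}}$. Solving $y_n = \pm(\delta + c\|y'\|)$ simultaneously with $y_n = \pm c'\|y'\|$ shows $I(0)$ to be the disjoint union of two $(n-2)$-spheres of radius $\delta/(c'-c) = 2\delta/\varepsilon$ at heights $\pm(\delta + 2c\delta/\varepsilon)$; the hypothesis $\delta < \varepsilon r/2$ gives $2\delta/\varepsilon < r$, so $I(0) \subset B_r^{V'}\times\R$. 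At $y \in I(0)$ on the upper branch one computes $(S_0^{\gamma_{c'}})^a \cap T^*_y V = \R_{>0}\cdot(c'y'/\|y'\|,-1)$ and $\SSi(\cor_{Z_c^{r,\delta}}) \cap T^*_y V = \R_{>0}\cdot(cy'/\|y'\|,-1)$; matching scalars, the difference has norm $\lambda(c'-c) = \lambda\varepsilon/2$ while $|\xi| \geq \lambda$, giving a ratio $\leq \varepsilon/2 < \varepsilon$. The lower branch is symmetric.

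The main technical subtlety I anticipate is the uniformity of the microsupport comparison in (i): the base point $y$ ranges over the moving compact set $I(x)$, so continuity must be checked jointly in $(x,y)$. This is handled by the observation above that the first clause keeps $I(x)$ uniformly away from the four apices, so the two comparison rays form continuous fields on a neighborhood of $\{x_0\}\times I(x_0)$ and the strict inclusion at $x_0$ propagates by a standard compactness argument.
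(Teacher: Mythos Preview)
Your proposal is correct and follows the same approach as the paper: for (i) you verify that each clause in the definition of $W_{c,c'}^{r,\delta,\varepsilon}$ is an open condition (handling the subtlety that $Z_c^{r,\delta}$ itself is only locally closed by showing condition~2 forces $x$ into its interior), and for (ii) you check directly that the origin belongs to $W_{c,c'}^{r,\delta,\varepsilon}$. The paper's own proof consists of the two sentences ``the first assertion is clear on the definition'' and ``we check easily that $(0,0)\in W_{c,c'}^{r,\delta,\varepsilon}$,'' so your write-up is a faithful and rather thorough expansion of exactly what the author leaves to the reader.
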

\begin{proof}
The first assertion is clear on the definition~\eqref{eq:defWcreps}.  For the
second claim we check easily that$(0,0) \in W_{c,c'}^{r,\delta,\varepsilon}$.
\end{proof}

\begin{lemma}\label{lem:cutoff_geom}
Let $c_2 > c_1 >0$ and $r>0$ be given.  We choose $c, \varepsilon >0$ such
that, using the notation~\eqref{eq:defCeps}, we have
$(\partial \gamma_c^{\circ})\langle\varepsilon\rangle 
\subset \gamma_{c_2}^{\circ} \setminus \Int(\gamma_{c_1}^{\circ})$.
We choose $\delta>0$ and $c'>c$. We set $U = B_r^{V'} \times \R$.
Then, for all $F\in \Derb(\cor_U)$ satisfying
\begin{equation}\label{eq:hypSSF_split}
\dot\SSi(F) \cap \bigr( U \times 
(\gamma_{c_2}^{\circ a} \setminus \Int(\gamma_{c_1}^{\circ a})) \bigl) = \emptyset,
\end{equation}
we have
$S_x^{\gamma_{c'}} \cap  \dot\SSi(F\tens \cor_{Z_c^{r,\delta}}) = \emptyset$,
for all $x\in W_{c,c'}^{r,\delta,\varepsilon}$.
\end{lemma}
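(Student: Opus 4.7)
The plan is to argue by contradiction: suppose there exists $(y,\eta)\in S_x^{\gamma_{c'}}\cap\dot\SSi(F\tens\cor_{Z_c^{r,\delta}})$ for some $x\in W_{c,c'}^{r,\delta,\varepsilon}$; I will exhibit a vector of $\dot\SSi(F)$ in the forbidden region $\gamma_{c_2}^{\circ a}\setminus\Int(\gamma_{c_1}^{\circ a})$, contradicting~\eqref{eq:hypSSF_split}.

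The first step is to describe $\dot\SSi(\cor_{Z_c^{r,\delta}})$ on $U=B_r^{V'}\times\R$ by applying Example~\ref{ex:microsupport}~(iii) to the two nontrivial defining inequalities of $Z_c^{r,\delta}$ (the condition $\|x'\|<r$ contributes nothing on $U$): at a smooth point of the top face the cotangent fiber is the ray $\R_{\geq 0}\cdot(cx'/\|x'\|,-1)$, contained in $\partial\gamma_c^\circ\setminus\{0\}$; on the bottom face, another ray in $\partial\gamma_c^\circ\setminus\{0\}$; and at each of the two vertices $(0,\pm\delta)$ of $Z_c^{r,\delta}$ the fiber is the whole cone $\gamma_c^\circ$ (Example~\ref{ex:microsupport}~(iv)). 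Since $c_1<c<c_2$, the antipodes of the smooth-point rays lie on $\partial\gamma_c^{\circ a}\subset\gamma_{c_2}^{\circ a}\setminus\Int(\gamma_{c_1}^{\circ a})$, which by~\eqref{eq:hypSSF_split} is disjoint from $\dot\SSi(F)$. Hence Corollary~\ref{cor:opboim}~(i) applies in a neighborhood of any $y\neq(0,\pm\delta)$ and gives
\[
\SSi(F\tens\cor_{Z_c^{r,\delta}})|_y\subset \SSi(F)|_y+\SSi(\cor_{Z_c^{r,\delta}})|_y,
\]
so $\eta=\eta_1+\eta_2$ with $\eta_1\in\SSi(F)|_y\cup\{0\}$ and $\eta_2\in\SSi(\cor_{Z_c^{r,\delta}})|_y\cup\{0\}$.

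Next, I split on the position of $y$. If $y\notin\ol{Z_c^{r,\delta}}$ the sheaf vanishes near $y$; if $y\in\Int Z_c^{r,\delta}$ then $\eta_2=0$, so $(y,\eta)\in\dot\SSi(F)$ with $\eta\in\partial\gamma_{c'}^{\circ a}\subset\gamma_{c_2}^{\circ a}\setminus\Int(\gamma_{c_1}^{\circ a})$ (ensured for $c_1<c'<c_2$), contradicting~\eqref{eq:hypSSF_split}. The essential case is $y\in\partial Z_c^{r,\delta}$. The first clause of $W$, $\{(0,\pm\delta)\}\cap\ol{x+Z_{c'}^{\infty,0}}=\emptyset$, excludes $y=(0,\pm\delta)$ because $(y,\eta)\in S_x^{\gamma_{c'}}$ forces $y\in\partial(x+\gamma_{c'})\cup\partial(x+\gamma_{c'}^a)\subset\ol{x+Z_{c'}^{\infty,0}}$. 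Hence $y$ lies on the smooth part of $\partial Z_c^{r,\delta}$, where $\SSi(\cor_{Z_c^{r,\delta}})|_y=\R_{\geq 0}\xi_0$ for some generator $\xi_0$, and combining with the clause $I\subset B_r^{V'}\times\R$ we conclude $y\in I$. The microsupport clause of $W$ then gives $-\eta=\lambda\xi_0+\delta'$ with $\lambda>0$ and $\|\delta'\|<\varepsilon\|\eta\|$; writing $\eta_2=\mu\xi_0$, $\mu\geq 0$ (the case $\eta_2=0$ reducing to the previous paragraph),
\[
\eta_1=\eta-\eta_2=-(\lambda+\mu)\xi_0-\delta'.
\]
A short estimate bounds $\|\delta'\|/\|\eta_1\|$ by a quantity of order $\varepsilon$, so (after adjusting the initial $\varepsilon$) $\eta_1\in(\partial\gamma_c^{\circ a})\langle\varepsilon\rangle$; by the antipode of the hypothesis $(\partial\gamma_c^\circ)\langle\varepsilon\rangle\subset\gamma_{c_2}^\circ\setminus\Int(\gamma_{c_1}^\circ)$ this gives $\eta_1\in\gamma_{c_2}^{\circ a}\setminus\Int(\gamma_{c_1}^{\circ a})$, again contradicting~\eqref{eq:hypSSF_split}.

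The main obstacle is the behaviour at the two vertices $(0,\pm\delta)$ of $Z_c^{r,\delta}$, where $\dot\SSi(\cor_{Z_c^{r,\delta}})$ degenerates from a single ray to the full cone $\gamma_c^\circ$ and the ``$\varepsilon$-ray'' control supplied by $W$ is lost; the first geometric clause in the definition of $W_{c,c'}^{r,\delta,\varepsilon}$ is designed precisely to exclude such $y$ and reduce everything to the smooth part of the boundary, where the estimates above go through.
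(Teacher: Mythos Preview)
Your overall structure (contradiction, sum formula from Corollary~\ref{cor:opboim}, case split on the position of $y$) matches the paper, but there is a genuine gap in the interior case and a smaller imprecision in the boundary case.

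\textbf{Interior case.} You assert $\eta\in\partial\gamma_{c'}^{\circ a}\subset\gamma_{c_2}^{\circ a}\setminus\Int(\gamma_{c_1}^{\circ a})$, with the parenthetical ``ensured for $c_1<c'<c_2$''. But the lemma only assumes $c'>c$; nothing prevents $c'\geq c_2$, and then the inclusion fails. The paper does not use $\partial\gamma_{c'}^{\circ a}$ at all here. Instead it observes that the set $\{y':(y';\eta)\in S_x^{\gamma_{c'}}\}$ is a line through $x$ contained in $\partial(x+Z_{c'}^{\infty,0})$; since $c'>c$ this line is steeper than the $c$-cone boundary and therefore meets $\partial Z_c^{\infty,\delta}$ at some point $y_1$, so $y_1\in I$. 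The first clause of $W$ gives $y_1\neq(0,\pm\delta)$ and the third clause of $W$ \emph{applied at $y_1$} (not at $y$) yields $-\eta\in(T^*_{y_1}V\cap\SSi(\cor_{Z_c^{r,\delta}}))\langle\varepsilon\rangle\subset(\partial\gamma_c^{\circ})\langle\varepsilon\rangle$, hence $\eta\in\gamma_{c_2}^{\circ a}\setminus\Int(\gamma_{c_1}^{\circ a})$ by the hypothesis on $c,\varepsilon$. Thus $(y;\eta)\notin\SSi(F)$, which immediately forces $y\in\partial Z_c^{r,\delta}$ --- no separate interior argument is needed, and no bound on $c'$ beyond $c'>c$ is required.

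\textbf{Boundary case.} Your explicit estimate only yields $-\eta_1\in(\R_{\geq 0}\xi_0)\langle\varepsilon/(1-2\varepsilon)\rangle$, and ``adjusting the initial $\varepsilon$'' is not allowed since $\varepsilon$ is fixed in the hypothesis. The paper's remedy is neater: $(\R_{\geq 0}\eta_2)\langle\varepsilon\rangle$ is a convex cone (a circular cone of half-angle $\arcsin\varepsilon<\pi/2$), it contains both $-\eta$ and $\eta_2$, hence also $-\eta+\eta_2=-\eta_1$. This lands $-\eta_1$ in $(\partial\gamma_c^{\circ})\langle\varepsilon\rangle$ with no loss in the constant.
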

\begin{proof}
Let $x\in W_{c,c'}^{r,\delta,\varepsilon}$ be a given point. Let us assume that
there exists a point
$(y;\eta) \in S_x^{\gamma_{c'}} \cap  \dot\SSi(F\tens \cor_{Z_c^{r,\delta}})$.
We set
$$
I = \partial Z_c^{\infty,\delta} \cap \partial (x+Z_{c'}^{\infty,0})
= \partial Z_c^{\infty,\delta} \cap \pi_V(S_x^{\gamma_{c'}})
$$
as in~\eqref{eq:defWcreps}.

\medskip\noindent
(i) We first prove that we must have $y\in I$.
Since $y\in \pi_V(S_x^{\gamma_{c'}})$ we have to check that
$y\in \partial Z_{c}^{r,\delta}$.
We remark that the set of $y'\in V$ such that $(y';\eta) \in S_x^{\gamma_{c'}}$
is a line through $x$. This line meets $\partial{(x+Z_{c'}^{\infty,0})}$ at some
point $y_1$, necessarily distinct from $(0,\pm \delta)$.
Then the definition of $W_{c,c'}^{r,\delta,\varepsilon}$ in~\eqref{eq:defWcreps}
implies $-\eta \in (T^*_{y_1}V \cap \SSi(\cor_{Z_c^{r,\delta}}))
\langle\varepsilon\rangle$.
Since $y_1 \not= (0, \pm \delta)$ we deduce
$\eta \in (\partial \gamma_c^{\circ a})\langle\varepsilon\rangle 
\subset \gamma_{c_2}^{\circ a} \setminus \Int(\gamma_{c_1}^{\circ a})$.
By~\eqref{eq:hypSSF_split} we have in particular
\begin{equation}\label{eq:pflemcutoff1}
(y;\eta) \not\in \SSi(F).
\end{equation}
This implies that $ T^*_yV \cap \SSi(\cor_{Z_c^{r,\delta}}) \not=0$.
We deduce $y\in \partial Z_{c}^{r,\delta}$ and then $y\in I$.

\medskip\noindent
(ii) Since $y\in \pi_V(S_x^{\gamma_{c'}})$ we have $y \not= (0, \pm \delta)$,
hence $T^*_yV \cap \SSi(\cor_{Z_c^{r,\delta}}) \subset  \partial \gamma_c^{\circ}
\subset \gamma_{c_2}^{\circ} \setminus \Int(\gamma_{c_1}^{\circ})$.
We deduce from~\eqref{eq:hypSSF_split} that
$T^*_yV \cap \SSi(F) \cap \SSi(\cor_{Z_c^{r,\delta}})^a$ is contained in the
zero section.  Then Corollary~\ref{cor:opboim} gives
\begin{equation*}
T^*_yV \cap \SSi(F\tens \cor_{Z_c^{r,\delta}}) \subset T^*_yV \cap (\SSi(F) +
\SSi(\cor_{Z_c^{r,\delta}})).
\end{equation*}
Hence $\eta$ can be written $\eta = \eta_1 +\eta_2$, with
$\eta_1 \in T^*_yV \cap \SSi(F)$ and 
$\eta_2 \in T^*_yV \cap \SSi(\cor_{Z_c^{r,\delta}})$.
By~\eqref{eq:pflemcutoff1} we have $\eta_2\not=0$. Since
$ \partial Z_{c}^{r,\delta}$ is a smooth hypersurface around $y$, we have
in fact $T^*_yV \cap \SSi(\cor_{Z_c^{r,\delta}}) = \R_{\geq 0}\eta_2$.
As in~(i) the definition of $W_{c,c'}^{r,\delta,\varepsilon}$ gives
$-\eta \in (\R_{\geq 0}\eta_2)\langle\varepsilon\rangle$.
Since $(\R_{\geq 0}\eta_2)\langle\varepsilon\rangle$ is convex, it follows that
$$
-\eta_1 = -\eta+\eta_2 \in (\R_{\geq 0}\eta_2)\langle\varepsilon\rangle
\subset (\partial \gamma_c^{\circ})\langle\varepsilon\rangle
\subset \gamma_{c_2}^{\circ} \setminus \Int(\gamma_{c_1}^{\circ}) .
$$
This contradicts~\eqref{eq:hypSSF_split} and proves the lemma.
\end{proof}

\begin{proposition}\label{prop:cut-off_split_local}
Let $c_2 > c_1 >0$ and $r>0$ be given.
Then there exists $r_1 >0$ such that, for any $s\in\R$, we have,
setting $U = B_r^{V'} \times \R$ and $B = B_{(0,s),r_1}^V$:
for all $F\in \Derb(\cor_U)$ satisfying~\eqref{eq:hypSSF_split},
there exist $F_1,F_2,L \in \Derb(\cor_B)$  and a
distinguished triangle in $\Derb(\cor_B)$
\begin{equation*}
F_1 \oplus F_2 \to F|_B \to L \to[+1]
\end{equation*}
such that $\dot\SSi(F_1) = \dot\SSi(F) \cap (B \times \gamma_{c_1}^{\circ a})$,
$\dot\SSi(F_2) = \dot\SSi(F) \setminus \dot\SSi(F_1)$ and
$\SSi(L) \subset T^*_BB$.
In particular $F_1 \to F|_B$ is an isomorphism on
$B \times \gamma_{c_1}^{\circ a}$ and $F_2 \to F|_B$ is an isomorphism on
$B \times (V^* \setminus \Int(\gamma_{c_2}^{\circ a}))$.
\end{proposition}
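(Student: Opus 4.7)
The plan is to combine the cut-off splitting Lemma~\ref{lem:cut-off_split} with the geometric positioning provided by Lemma~\ref{lem:cutoff_geom}, exploiting the translation invariance of $U$ in the $x_n$-direction so that a single radius $r_1$ works uniformly in $s$.

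First I fix auxiliary parameters independent of $s$ and $F$. Choose $c\in(c_1,c_2)$ and $\varepsilon>0$ small enough that $(\partial \gamma_c^\circ)\langle\varepsilon\rangle \subset \gamma_{c_2}^\circ \setminus \Int(\gamma_{c_1}^\circ)$ and so that $c' := c + \varepsilon/2$ still satisfies $c_1 < c < c' < c_2$. Then $\gamma_{c_1}^{\circ a} \subset \gamma_{c'}^{\circ a} \subset \gamma_{c_2}^{\circ a}$, and moreover $\gamma_{c_1}^{\circ a} \setminus \{0\} \subset \Int \gamma_{c'}^{\circ a}$ and $\gamma_{c'}^{\circ a}\setminus\{0\}\subset \Int\gamma_{c_2}^{\circ a}$. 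Pick $r'<r$ and $\delta\in(0,\varepsilon r'/2)$. By Lemma~\ref{lem:Wnonvide}, $W := W_{c,c'}^{r',\delta,\varepsilon}$ is open and contains $(0,0)$; since $(0,0)$ also lies in $\Int(Z_c^{r',\delta})$, choose $r_1>0$ with $B_{(0,0),r_1}^V \subset W \cap \Int(Z_c^{r',\delta})$. The strict inequality $r'<r$ makes $\overline{Z_c^{r',\delta}}$ compact and contained in $U$.

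Now let $s \in \R$ and $F \in \Derb(\cor_U)$ satisfy~\eqref{eq:hypSSF_split}, and set $B = B_{(0,s),r_1}^V$. Since $U$ and the condition~\eqref{eq:hypSSF_split} are invariant under translation by $(0,s)$ in the $x_n$-direction, Lemma~\ref{lem:cutoff_geom} applied to the translated situation gives, with $F^s := F \otimes \cor_{Z_c^{r',\delta}+(0,s)}$ (compactly supported in $U$),
\[
S_x^{\gamma_{c'}} \cap \dot\SSi(F^s) = \emptyset \quad\text{for all } x \in W + (0,s).
\]
Then Lemma~\ref{lem:cut-off_split} applied with $\gamma = \gamma_{c'}$ on the open set $W+(0,s)$ produces a distinguished triangle $P_{\gamma_{c'}}(F^s)|_{W+(0,s)} \oplus G \to F^s|_{W+(0,s)} \to L \to[+1]$ with $\dot\SSi(L)=\emptyset$ and $\dot\SSi(G)\cap((W+(0,s))\times\gamma_{c'}^{\circ a})=\emptyset$. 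Restricting to $B$, which lies in $\Int(Z_c^{r',\delta}+(0,s))$ so that $F^s|_B \simeq F|_B$, yields the desired triangle with $F_1 := P_{\gamma_{c'}}(F^s)|_B$ and $F_2 := G|_B$.

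It remains to identify the microsupports. Proposition~\ref{prop:cut-off} gives $\dot\SSi(F_1)\subset B\times\gamma_{c'}^{\circ a}$, while $\dot\SSi(F_2)\cap(B\times\gamma_{c'}^{\circ a})=\emptyset$, so $\dot\SSi(F_1)$ and $\dot\SSi(F_2)$ are disjoint and, as $\dot\SSi(L)=\emptyset$, their union equals $\dot\SSi(F|_B)$. The hypothesis~\eqref{eq:hypSSF_split} combined with $\gamma_{c_1}^{\circ a}\subset\gamma_{c'}^{\circ a}\subset\gamma_{c_2}^{\circ a}$ then forces $\dot\SSi(F_1)=\dot\SSi(F)\cap(B\times\gamma_{c_1}^{\circ a})$ and $\dot\SSi(F_2)=\dot\SSi(F)\setminus\dot\SSi(F_1)$. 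The ``in particular'' assertions follow by computing, via the octahedral axiom, the cones of $F_1 \to F|_B$ and $F_2 \to F|_B$ and applying the inclusion $\gamma_{c_1}^{\circ a}\setminus\{0\}\subset\Int\gamma_{c_2}^{\circ a}$. The main delicate point is purely bookkeeping: the five parameters $c, c', \varepsilon, \delta, r'$ must be coordinated so that both Lemmas~\ref{lem:Wnonvide} and~\ref{lem:cutoff_geom} apply, and, crucially, so that $F^s$ has compact support inside $U$ uniformly in $s$; the strict inequality $r'<r$ together with the cylindrical shape of $U$ ensures this.
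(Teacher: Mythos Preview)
Your proof is correct and follows essentially the same route as the paper's: choose $c,\varepsilon$ so that Lemma~\ref{lem:cutoff_geom} applies, set $c'=c+\varepsilon/2$, use Lemma~\ref{lem:Wnonvide} to guarantee a nonempty $W$, then apply Lemma~\ref{lem:cut-off_split} to $F\otimes\cor_Z$ with $\gamma=\gamma_{c'}$ and restrict to a small ball $B$. Your version is in fact slightly more careful than the paper's on two points: you introduce $r'<r$ so that $\overline{Z_c^{r',\delta}}$ is genuinely compact inside $U$ (the paper tacitly uses $r$ itself), and you make explicit the inclusion $B\subset\Int(Z)$ needed for $F^s|_B\simeq F|_B$.
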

\begin{proof}
Since the statement is invariant by translation in the factor $\R$
of $V = V' \times \R$ we can assume $s=0$.
We choose $c, \varepsilon$ such that
$(\partial \gamma_c^{\circ})\langle\varepsilon\rangle 
\subset \gamma_{c_2}^{\circ} \setminus \Int(\gamma_{c_1}^{\circ})$,
as in Lemma~\ref{lem:cutoff_geom}. We set $c' = c+\varepsilon/2$.
We remark that $c_1<c'<c_2$.
By Lemma~\ref{lem:Wnonvide} we can choose $\delta>0$
so that $W_{c,c'}^{r,\delta,\varepsilon} \not=\emptyset$.
We choose $r_1$ such that
$B \eqdot B_{(0,0),r_1}^V \subset W_{c,c'}^{r,\delta,\varepsilon}$.

By Lemma~\ref{lem:cutoff_geom} we can apply Lemma~\ref{lem:cut-off_split} to
$F\tens \cor_{Z_c^{r,\delta}}$ with $\gamma=\gamma_{c'}$. We obtain the
distinguished triangle~\eqref{eq:cut-off_split} with $F$ replaced by
$F\tens \cor_{Z_c^{r,\delta}}$.
Then we set $F_1 = P_{\gamma_{c'}}(F\tens \cor_{Z_c^{r,\delta}})|_B$ and
$F_2 = G|_B$. The proposition follows.
\end{proof}

\section{Non constant groups of sections}
\label{sec:noncstsections}

In this section we use Proposition~\ref{prop:cut-off_split_local} to give
conditions on a sheaf which imply that it has non isomorphic cohomology groups
over some open sets. These open sets only depend on the microsupport. This will
be used in the proof of the Gromov-Eliashberg Theorem to insure the non
triviality of the microsupport of a sheaf (see
Proposition~\ref{prop:sections-fixed-open}).

We still use the notations $V'=\R^{n-1}$ and $V = V' \times \R$ of
Section~\ref{sec:cutoff}.  We also let $\gamma_c \subset V$ be the cone defined
in~\eqref{eq:defgammac} for any $c>0$.

\begin{proposition}\label{prop:camellight}
Let $c_2 > c_1 >0$ and $r>0$ be given and let $r_1 >0$ 
be given by Proposition~\ref{prop:cut-off_split_local}.
We set $U = B_r^{V'} \times \R$. Let $F\in \Derb(\cor_U)$ and
$$
S_1 = \dot\SSi(F) \cap (U \times \gamma_{c_1}^{\circ a}) ,\qquad
S_2 = \dot\SSi(F) \setminus S_1.
$$
We assume that $F$ satisfies~\eqref{eq:hypSSF_split} and that
there exist $a<b$ such that
\begin{itemize}
\item [(i)] $F|_{\{0\}\times\R} \simeq \cor_{[a,b[} \oplus F'$ for some
 $F'\in \Derb(\cor_\R)$,
\item [(ii)] $S_i \cap T^*_{(0,a)}V = \emptyset$ or
$S_i \cap T^*_{(0,b)}V = \emptyset$, for $i=1$ and $i=2$.
\end{itemize}
Then $b-a \geq r_1$.
\end{proposition}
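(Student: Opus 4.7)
The plan is to argue by contradiction: assume $b-a<r_1$. Setting $s=(a+b)/2$ we have $|s-a|=|s-b|=(b-a)/2<r_1/2$, so both $(0,a)$ and $(0,b)$ lie in the open ball $B\eqdot B_{(0,s),r_1}^V$, and equivalently $[a,b[\subset J\eqdot(s-r_1,s+r_1)$. Since $F$ satisfies~\eqref{eq:hypSSF_split}, Proposition~\ref{prop:cut-off_split_local} applied at this $s$ produces a distinguished triangle
\begin{equation*}
F_1\oplus F_2\to F|_B\to L\to[+1]
\end{equation*}
in $\Derb(\cor_B)$, where $\dot\SSi(F_i)=S_i\cap T^*B$ for $i=1,2$ and $\dot\SSi(L)=\emptyset$.

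Next I pull this triangle back along the closed embedding $\{0\}\times J\hookrightarrow B$. By hypothesis~(i) and the inclusion $[a,b[\subset J$, the restriction identifies as $F|_{\{0\}\times J}\simeq\cor_{[a,b[}\oplus F'|_J$, while $L|_{\{0\}\times J}$ still has $\dot\SSi=\emptyset$ since $i^{-1}$ is exact on sheaves and preserves local constancy (Example~\ref{ex:microsupport}(i)). The resulting distinguished triangle on the interval $J$ fits the hypothesis of Lemma~\ref{lem:deux_decompositions}, which forces $\cor_{[a,b[}$ to be a direct summand of either $F_1|_{\{0\}\times J}$ or $F_2|_{\{0\}\times J}$. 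Without loss of generality suppose it sits inside $F_1|_{\{0\}\times J}$; the three remaining cases are treated identically by symmetry.

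To conclude, recall that $\dot\SSi(\cor_{[a,b[})$ is nonempty in both fibres $T^*_aJ$ and $T^*_bJ$ (apply Example~\ref{ex:microsupport}(iii) to $[a,+\infty[$ near $a$ and to $]-\infty,b[$ near $b$, together with the triangular inequality). Hence $\dot\SSi(F_1|_{\{0\}\times J})$ is nonempty over both $a$ and $b$. By hypothesis~(ii) with $i=1$ we may pick one of the two points, say $(0,a)$, such that $S_1\cap T^*_{(0,a)}V=\emptyset$. Since $\dot\SSi(F_1)=S_1\cap T^*B$ is closed in $T^*B$, this vanishing persists on a full open neighborhood of $(0,a)$ in $B$, so by Example~\ref{ex:microsupport}(i) the cohomology sheaves of $F_1$ are locally constant there. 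Restricting to a neighborhood of $a$ inside $\{0\}\times J$ preserves local constancy, forcing $\dot\SSi(F_1|_{\{0\}\times J})$ to be empty near $a$ — contradicting the contribution from the summand $\cor_{[a,b[}$. The main bookkeeping issue is to align, in the four combinations of (summand in $F_1$ vs $F_2$) with (empty fibre of $S_i$ at $(0,a)$ vs $(0,b)$), each of which reduces to the same local-constancy contradiction; this shows $b-a\geq r_1$.
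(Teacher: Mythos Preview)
Your proof is correct and follows essentially the same approach as the paper: assume $b-a<r_1$, apply Proposition~\ref{prop:cut-off_split_local} on the ball centered at $(0,(a+b)/2)$, restrict the resulting triangle to $\{0\}\times\R$, invoke Lemma~\ref{lem:deux_decompositions} to place $\cor_{[a,b[}$ as a summand of one $F_i|_{\{0\}\times J}$, and then contradict hypothesis~(ii). The paper compresses your final paragraph into the single line ``$\SSi(F')$ meets both $T^*_{(0,a)}V$ and $T^*_{(0,b)}V$ outside the zero-section''; your local-constancy argument is exactly what justifies that implication.
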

\begin{proof}
If $b-a < r_1$, the ball $B$ of center $(0,(a+b)/2)$ and radius
$r_1$ contains $a$ and $b$. Proposition~\ref{prop:cut-off_split_local}
implies that we have a distinguished triangle $F_1 \oplus F_2 \to F|_B \to L
\to[+1]$, where $\SSi(L) \subset T^*_BB$ and $\SSi(F_i) \subset S_i$, $i=1,2$.

Then the hypothesis~(i) and Lemma~\ref{lem:deux_decompositions} implies
that $\cor_{[a,b[}$ is a direct summand of $F'|_{B \cap (\{0\}\times\R)}$ where
$F'=F_1$ or $F'=F_2$. Hence $\SSi(F')$ meets both $T^*_{(0,a)}V$ and
$T^*_{(0,b)}V$ outside the zero-section, which contradicts the hypothesis~(ii).
\end{proof}

Proposition~\ref{prop:camellight} will be used together with
Lemma~\ref{lem:ext_sect} below. It says that we can extend sections over a
segment to some neighborhood of this segment which only depends on the
microsupport. For given $c,r,s>0$ with $r<s/c$, we define $U_{r,s}^c \subset V$
by
\begin{equation}\label{eq:defUceta}
U_{r,s}^c = \{ (x',x_n) \in V;\;
\| x'\| < r \text{ and }  -s < x_n < s - c \| x'\|  \}.
\end{equation}

\newcommand{\ccc}{.5}
\newcommand{\eeta}{.6}
$$
\begin{tikzpicture}
\coordinate (A) at (-\eeta, -\ccc*\eeta +\eeta); \coordinate (B) at (0,\eeta);
\coordinate (C) at (\eeta, -\ccc*\eeta +\eeta);
\coordinate (D) at (\eeta, -\eeta) ; \coordinate (E) at (-\eeta, -\eeta);

\fill [fill=gray!20] (A) -- (B) -- (C) -- (D) -- (E) -- cycle;
\draw [thick, dotted] (A) -- (B) node[left] {$\scriptstyle s$} -- (C)
 -- (D) -- (E) -- cycle;

\draw (0,-\eeta) node[below left] {$\scriptstyle -s$} ;
\draw (-\eeta,0) node[below left] {$\scriptstyle -r$} ;
\draw (\eeta,0) node[below right] {$\scriptstyle r$} ;

\draw [->] (-3*\eeta,0) -- (5*\eeta,0) node[above] {$x'$} ;
\draw [->]  (0,-2*\eeta) -- (0,2*\eeta) node[below left] {$x_n$} ;
\draw (-3,.3*\eeta) node [left] {$U_{r,s}^c$}
 -- (-.3*\eeta,.3*\eeta) node {\ppp} ;
\end{tikzpicture}
$$
The Euclidean structure on $V'$ gives an identification $V'\simeq V'^*$ and we
can define
$T^{*,out}V' = \{ (x';\xi') \in T^*V'$; $\xi'=\lambda\, x'$, $\lambda \geq 0\}$.
We also set $C_r^\pm = (B_r^{V'}\setminus \{0\})\times \{ \pm x_n \geq 0\}$.
By Example~\ref{ex:microsupport} we have
\begin{align}
\label{eq:boundUscpos}
&\SSi(\cor_{U_{r,s}^c}) \cap \opb{\pi_V}(C_r^+)
\subset \partial\gamma_c^{\circ a} \cap (T^{*,out}V' \times T^*\R),\\
\label{eq:boundUscneg}
&\SSi(\cor_{U_{r,s}^c}) \cap \opb{\pi_V}(C_r^-)
\subset T^*_{V'}V' \times \{(-s;\sigma);\; \sigma\geq 0\}.
\end{align}

\begin{lemma}\label{lem:ext_sect}
Let $c,r,s>0$ be given with $r<s/c$. Let $U\subset V$ be a neighborhood of
$\ol{U_{r,s}^c}$ and let $F\in \Derb(\cor_U)$ be such that
$\SSi(F) \subset U \times \Int(\gamma_c^{\circ a})$.
Then the morphism  $\rsect(U_{r,s}^c;F) \to \rsect(]-s,s[; F|_{\{0\}\times ]-2s,2s[})$
is an isomorphism.
\end{lemma}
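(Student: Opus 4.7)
The plan is to establish the isomorphism by a microlocal Morse argument with the distance-to-axis function $\phi(x', x_n) = \|x'\|$. The essential observation is that for $x' \neq 0$, the differential $d\phi = (x'/\|x'\|, 0)$ has vanishing $x_n$-component, whereas any nonzero $\xi \in \SSi(F)$ must satisfy $\xi_n > \|\xi'\|/c > 0$ by the hypothesis $\SSi(F) \subset U \times \Int(\gamma_c^{\circ a})$. Hence $d\phi(x) \notin \SSi(F)$ whenever $x' \neq 0$, which is the non-characteristic condition required by Corollary~\ref{cor:Morse}.

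Introduce the nested family $W_t := U_{t,s}^c$ for $t \in (0, r]$, so that $W_r = U_{r,s}^c$ and $\bigcap_{t > 0} W_t = Z := \{0\} \times\, ]-s,s[$. To arrange the properness hypothesis of Corollary~\ref{cor:Morse}, I would work with the compactly supported sheaf $G := F \tens \cor_{U_{r,s}^c}$, whose support is contained in $\ol{U_{r,s}^c}$. The bounds~\eqref{eq:boundUscpos}--\eqref{eq:boundUscneg} on $\SSi(\cor_{U_{r,s}^c})$, combined with the cone position of $\SSi(F)$, imply case by case that $\SSi(F) \cap \SSi(\cor_{U_{r,s}^c})^a$ lies in the zero section: at the slanted top and the flat bottom, the antipodes force $\xi_n \leq 0$, contradicting $\xi_n > \|\xi'\|/c \geq 0$. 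Corollary~\ref{cor:opboim} then yields $\SSi(G) \subset \SSi(F) + \SSi(\cor_{U_{r,s}^c})$, and one verifies in the same vein that $d\phi$ still avoids $\SSi(G)$ on the annular regions $\{t' \leq \phi < t\} \cap U_{r,s}^c$ for $0 < t' \leq t < r$. Using the identity $\rsect(\Omega; G) = \rsect(\Omega \cap U_{r,s}^c; F)$ for $\Omega \subset V$ open, Corollary~\ref{cor:Morse} gives
\[
\rsect(W_t; F) \isoto \rsect(W_{t'}; F) \qquad \text{for } 0 < t' \leq t < r,
\]
and the endpoint $t = r$ is recovered via the exhaustion $W_r = \bigcup_{b < r} W_b$.

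With the constancy $\rsect(W_t; F) \simeq \rsect(U_{r,s}^c; F)$ for all $t \in (0, r]$ in hand, it remains to identify this common value with $\rsect(]-s,s[; F|_{\{0\}\times ]-2s,2s[}) = \rsect(Z; F|_Z)$. Since $Z$ is closed in the open subset $V' \times\, ]-s, s[$ of $V$, one has $\rsect(Z; F|_Z) = \varinjlim_W \rsect(W; F)$, the colimit running over open neighborhoods $W$ of $Z$ therein; the Morse constancy is invoked to equate each such $\rsect(W; F)$ with $\rsect(U_{r,s}^c; F)$ by sandwiching $W$ between two members $W_{t_1} \subset W \subset W_{t_2}$ of our family and using the two-sided isomorphisms. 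The main obstacle is precisely the simultaneous verification of properness and of the non-characteristic condition after the extension by zero $F \tens \cor_{U_{r,s}^c}$; the bounds~\eqref{eq:boundUscpos}--\eqref{eq:boundUscneg} placed just before the lemma are calibrated exactly so that this verification can be carried out region by region.
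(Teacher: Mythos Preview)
Your overall strategy — a microlocal Morse argument in the radial variable $\|x'\|$ — is the same as the paper's, but your implementation has a genuine gap coming from the choice of cutoff functor.

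You set $G = F \tens \cor_{U_{r,s}^c}$ and then assert two things that are false for this $G$:

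\smallskip
\textbf{(1) The microsupport check at the bottom.} You claim that at the flat bottom $x_n=-s$ the antipode of $\SSi(\cor_{U_{r,s}^c})$ has $\xi_n\le 0$. It is the other way around: locally $U_{r,s}^c=\{x_n+s>0\}$, so by Example~\ref{ex:microsupport}(iii) the microsupport of $\cor_{U_{r,s}^c}$ along $\{x_n=-s\}$ is $\{\xi'=0,\ \xi_n\le 0\}$, and its antipode is $\{\xi'=0,\ \xi_n\ge 0\}$. Since every nonzero covector in $\SSi(F)$ has $\xi_n>0$, the intersection $\SSi(F)\cap\SSi(\cor_{U_{r,s}^c})^a$ is \emph{not} contained in the zero section there, and Corollary~\ref{cor:opboim}(i) does not apply.

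\smallskip
\textbf{(2) The sections identity.} The formula $\rsect(\Omega;F\tens\cor_U)\simeq\rsect(\Omega\cap U;F)$ is wrong for open $U$ once $\Omega$ meets $\partial U$ (take $V=\R$, $F=\cor_\R$, $U=(0,1)$, $\Omega=(-1,1/2)$: the left side is $0$, the right side is $\cor$). The correct identity is $\rsect(\Omega;\rhom(\cor_U,F))\simeq\rsect(\Omega\cap U;F)$, since $\rhom(\cor_U,F)=\RR j_*j^{-1}F$ for $j\colon U\hookrightarrow V$.

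\smallskip
Both problems disappear if you replace $F\tens\cor_{U_{r,s}^c}$ by $\rhom(\cor_{U_{r,s}^c},F)$. Then the relevant microsupport condition becomes $\SSi(F)\cap\SSi(\cor_{U_{r,s}^c})\subset T^*_VV$, which \emph{does} hold: at the bottom $\SSi(\cor_{U_{r,s}^c})$ has $\xi_n\le 0$ while $\SSi(F)$ has $\xi_n>0$; at the slanted top $\SSi(\cor_{U_{r,s}^c})$ sits in $\partial\gamma_c^{\circ a}$ while $\SSi(F)$ sits in $\Int(\gamma_c^{\circ a})$. With this fix your Morse argument goes through and is essentially the paper's proof — the paper just pushes forward along $q_1\colon V\to V'$ first and runs Corollary~\ref{cor:Morse} on $V'$ rather than on $V$, which also makes the final passage to the stalk at $0\in V'$ (your colimit step) immediate via proper base change.
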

\begin{proof}
We choose $R>r$ such that $\ol{U_{R,s}^c} \subset U$ and we define
$G \in \Derb(\cor_{V'})$ by $G = \roim{q_1}(\rsect_{U_{R,s}^c}(F))$, where
$q_1 \cl V\to V'$ is the projection.  By~\eqref{eq:boundUscpos},
\eqref{eq:boundUscneg} and the hypothesis on $\SSi(F)$ we obtain that
$\SSi(F) \cap \SSi(\cor_{U_{R,s}^c})$ is contained in the zero section over
$C_R^+$ and $C_R^-$.  By Corollary~\ref{cor:opboim} and
Theorem~\ref{th:opboim}~(ii) we deduce
\begin{align}
\notag
\SSi(G) \cap \opb{\pi_{V'}}(B_R^{V'} \setminus \{0\})  \subset
\{(x';\xi');\; & \exists (x',x_n;\xi_1',\xi_{1,n}) \in \SSi(\cor_{U_{R,s}^c})^a , \\
\label{eq:ext_sect1}
& \exists (x',x_n;\xi_2',\xi_{2,n})  \in \SSi(F), \\
\notag
& \xi' = \xi_1' +\xi_2',\,  \xi_{1,n} +\xi_{2,n} =0 \} .
\end{align}
If $x_n\geq 0$ in~\eqref{eq:ext_sect1}, then~\eqref{eq:boundUscpos} gives
$\xi_{1,n} = -c^{-1} \|\xi_1' \|$ and
$\xi_1' = \lambda\, x'$, for some $\lambda\leq 0$.
The hypothesis on $\SSi(F)$ gives $\xi_{2,n} \geq c^{-1} \|\xi_2' \|$.
We deduce $\|\xi_1' \| \geq \|\xi_2' \|$ and then
$\langle x', \xi' \rangle \leq \lambda \|x'\|^2 + \|x'\|\, \|\xi_2' \| \leq 0$.
\\
If $x_n\geq 0$ in~\eqref{eq:ext_sect1}, then~\eqref{eq:boundUscneg} and the
hypothesis on $\SSi(F)$ give $\xi'=0$.

\smallskip
We conclude that $\SSi(G) \cap T^*(V' \setminus \{0\})  \subset
 \{ (x';\xi') \in T^*V'$; $\langle x', \xi' \rangle \leq 0\}$.
By Corollary~\ref{cor:Morse} we deduce that
$\rsect(B_{r_2}^{V'};G) \to \rsect(B_{r_1}^{V'};G)$ is an isomorphism for any
$0<r_1 < r_2 < R$. For $r_2=r$ and $r_1\to 0$ we find the isomorphism of the
lemma.
\end{proof}

\begin{proposition}\label{prop:sections-fixed-open}
Let $c_2 > c_1 >0$ and $r>0$ be given.
We set $U = B_r^{V'} \times \R$. 
Then there exist non empty connected open subsets $W_i$, $i=1,\ldots,4$, of $U$
such that $\ol{W_i} \subset W_{i+1}$, for $i=1,\ldots,3$, which satisfy the
following.  For any $F\in \Derb(\cor_U)$ such that
\begin{itemize}
\item [(i)] $F$ satisfies~\eqref{eq:hypSSF_split},
\item [(ii)] $\Lambda= \dot\SSi(F)$ is a Lagrangian submanifold of $\dT^*U$ and
  $F$ is simple along $\Lambda$,
\item [(iii)] setting
  $\Lambda_1 = \Lambda \cap (U \times \gamma_{c_1}^{\circ a})$, the natural map
  $\Lambda_1/\R_{>0} \to B_r^{V'}$ is proper and of degree $1$,
\end{itemize}
there exists $F_1 \in \Mod(\cor_{W_4})$ such that
\begin{itemize}
\item [(a)] $\dot\SSi(F_1) \subset T^*W_4 \cap T^*_{\sigma>0}(V'\times\R)$
and $\dot\SSi(F_1) \subset  \Conv( \Lambda_1 )$,
\item [(b)] there exists $u\in \sect(W_4;F_1)$ such that $u|_{W_3} \not=0$ and
  $u|_{W_2} =0$ or there exists $v\in \sect(W_2;F_1)$ such that $v|_{W_1}
  \not=0$ and $v$ is not in the image of $\sect(W_3;F_1) \to \sect(W_2;F_1)$.
\end{itemize}
\end{proposition}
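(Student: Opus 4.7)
The strategy is to extract from $F$ its positive-microsupport constituent via the cut-off of Section~\ref{sec:cutoff}, describe its restriction to the central vertical line using the simple sheaf classification (Proposition~\ref{prop:simplesheafR}), and detect non-trivial sections over pyramid-shaped opens by means of Lemma~\ref{lem:ext_sect}.

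For the construction, first apply Proposition~\ref{prop:cut-off_split_local} (or the underlying functor $P_{\gamma_{c_1}}$ combined with a geometric truncation) to obtain the positive-microsupport piece $K$ with $\dot\SSi(K) = \Lambda_1$.  The degree-$1$ hypothesis together with smoothness of $\Lambda_1$ forces $\Lambda_1/\R_{>0} \to B^{V'}_r$ to be a diffeomorphism, so $\Lambda_1$ is connected, and simplicity of $F$ propagates to a uniform shift $[d]$ along $\Lambda_1$.  Set $F_1 = H^{-d}(K)$.  By Proposition~\ref{prop:boundSStronc}, $\SSi(F_1) \subset \Conv(\Lambda_1) \subset \gamma_{c_1}^{\circ a}$, and since $\gamma_{c_1}^{\circ a}$ is a closed convex cone lying in the half-space $\{\sigma \geq 0\}$ and meeting $\{\sigma = 0\}$ only at the origin of each fibre, we obtain $\dot\SSi(F_1) \subset T^*_{\sigma>0}(V'\times\R)$, which yields~(a).

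For~(b), the degree-$1$ hypothesis forces $\Lambda_1$ to meet the cotangent fibre above $0 \in V'$ in a single ray at height $a_0 = f(0) \in \R$, and Proposition~\ref{prop:simplesheafR} describes $F_1|_{\{0\}\times\R}$ as an interval sheaf $\cor_{[a_0,b_0)}$ (up to a locally constant summand) with $b_0 - a_0 \geq r_1$ by Proposition~\ref{prop:camellight}, where $r_1$ is the cut-off constant from Proposition~\ref{prop:cut-off_split_local}.  I would then take the $W_i$ as nested (possibly translated) pyramids $U^{c_i}_{r_i,s_i}$ from~\eqref{eq:defUceta}, tuning the parameters so that the central-line intercepts $J_i = W_i \cap (\{0\}\times\R)$ form a nested sequence of open intervals with spacings of order $r_1$.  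Lemma~\ref{lem:ext_sect} identifies $\sect(W_i;F_1)$ with $\sect(J_i;\cor_{[a_0,b_0)})$; a case analysis on the position of the interval then shows that either $[a_0,b_0)$ meets $J_3$ but misses $J_2$, yielding a section $u$ realising the first alternative of~(b), or $a_0$ lies below $J_3$ while $b_0$ falls inside $J_3 \setminus J_2$, in which case $\sect(J_3;\cor_{[a_0,b_0)}) = 0$ while $\sect(J_2;\cor_{[a_0,b_0)}) = \cor$, producing a section $v$ realising the second alternative.

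The main technical obstacle is to choose the $W_i$ \emph{universally}, depending only on $c_1,c_2,r$, so that every configuration of $[a_0,b_0)$ with $b_0 - a_0 \geq r_1$ falls into one of the two alternatives.  Since $a_0$ can a priori lie anywhere on the real line while the pyramids $U^{c_i}_{r_i,s_i}$ have finite height, one must either exploit the $\sigma > 0$ constraint on $\dot\SSi(F_1)$ via Corollary~\ref{cor:Morse}-style propagation to replace finite-height pyramids by effectively unbounded regions, or allow $F_1$ itself to depend on $F$ (for instance by composing the cut-off with an additional translation-like truncation) so that the jump is forced into a fixed window inside $W_4$ of size comparable to $r_1$.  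Matching configurations to the two alternatives is then a straightforward bookkeeping exercise; the delicate step is the universal choice of $W_i$.
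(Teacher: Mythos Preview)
Your overall architecture matches the paper's: apply the cut-off of Proposition~\ref{prop:cut-off_split_local}, take a cohomology sheaf to land in $\Mod(\cor)$, and read off sections over pyramids via Lemma~\ref{lem:ext_sect}.  You also correctly anticipate that $F_1$ must depend on $F$ through a vertical translation, which is precisely what the paper does (the translation $T_s$ in part~(C) of the proof).

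However, there is a genuine gap.  Your claim that ``the degree-$1$ hypothesis together with smoothness of $\Lambda_1$ forces $\Lambda_1/\R_{>0}\to B_r^{V'}$ to be a diffeomorphism'' is false: a proper smooth degree-$1$ map between oriented manifolds of the same dimension can have folds and hence several preimages over an open set.  Consequently your next step, that $\Lambda_1$ meets the fibre over $0$ in a \emph{single} ray and that the line restriction is a single interval $\cor_{[a_0,b_0[}$, is not justified.  The paper does not claim this.  Instead it argues as follows: at a generic nearby point $x\in V'$ (chosen so that $\Lambda$ is transverse to $T^*_xV'\times T^*\R$; you cannot in general take $x=0$) the set $S_1=\pi(\Lambda_1)\cap(\{x\}\times\R)$ is finite and, because the signed count of preimages equals the degree, $|S_1|$ is \emph{odd}.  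Now $F|_{\{x\}\times I}$ decomposes via Proposition~\ref{prop:simplesheafR} into interval sheaves $\cor_{[s_i,s_j[}$, each of which pairs two points of $S$ (or one point with an endpoint of $I$).  By parity, not every element of $S_1$ can be paired with another element of $S_1$; hence some $s\in S_1$ is the endpoint of an interval whose other endpoint lies in $S\setminus S_1$ or at $\partial I$.  It is \emph{this} interval to which Proposition~\ref{prop:camellight} applies, yielding length $\geq r_1$.  This parity/pigeonhole step is the idea missing from your sketch, and without it the argument does not go through when $\Lambda_1$ is not a graph.
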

\begin{remark}
The conclusion~(b) of the proposition also holds with $\sect(W_i;F_1)$ replaced
by $H^0(W_i;F')$, where $F'$ is obtained from $F$ by a shift in the derived
category and a vertical translation in $V=V'\times\R$ (see part~(C) of the
proof).  We introduce $F_1$ because we want to be sure to stay in the bounded
derived category when we use this result in the paragraph~\ref{sec:pf4}.  It is
likely that the properties of the microsupport hold for unbounded complexes. In
this case the introduction of $F_1$ would be useless.
\end{remark}

\begin{proof}
(A) Let $r_1$ be given from $r,c_1,c_2$ by
Proposition~\ref{prop:cut-off_split_local}.
We set $D_1 = \mo{]}-r_1/8,-r_1/16[$,
$D_2 = \mo{]}-r_1/4,0[$,
$D_3 = \mo{]}-r_1/2,r_1/2[$,
$D_4 = \mo{]}-r_1,r_1[$.
For any $x\in V'$ and $i=1,\ldots,4$, we set $D_{i,x} = \{x\} \times D_i$.
We choose $t_i$, $t'_i>0$ and $0<\rho < \min\{t'_i/c_1\}$ 
such that $W_{i,x} \eqdot (x,t_i) + U_{\rho,t'_i}^{c_1}$ satisfies 
$W_{i,x} \cap \{x\}\times\R = D_{i,x}$.
Now we can choose $\varepsilon>0$ and non empty open subsets $W_i$,
$i=1,\ldots,4$, of $U$ such that $\ol{W_i} \subset W_{i+1}$ and,
for all $\|x\|<\varepsilon$,
\begin{equation}\label{eq:cor-sect-fix-open}
D_{1,x} \subset W_1 \subset W_2 \subset W_{2,x} , \qquad
D_{3,x} \subset W_3 \subset W_4 \subset W_{4,x}.
\end{equation}

\medskip\noindent
(B) We can find $x \in V'$ arbitrarily close to $0$ such that
$\Lambda$ intersects $T^*_xV'\times T^*\R$ transversally.  Since
$\Lambda_1/\R_{>0} \to B_r^{V'}$ is proper, $\Lambda_1 \cap (T^*_xV'\times T^*\R)$
consists of finitely many half lines, all contained in $T^*_xV'\times T^*I$ for
some bounded interval $I=\mo{]}a,b[$.
Then $\Lambda \cap (T^*_xV'\times T^*I)$ is also a finite set of half lines and
we write
$$
\Lambda \cap (T^*_xV' \times T^*I) = \bigsqcup_{k=1}^l
\{(x, s_k;\sigma\xi_k,\sigma); \; \sigma \not= 0 \}.
$$
Near a point $(x,s_i;\xi_i,1)$ the Lagrangian $\Lambda$ is the conormal bundle
to a smooth hypersurface, say $X_i$. If $s_i=s_j$, we have $\xi_i\not= \xi_j$
and the hypersurfaces $X_i$ and $X_j$ are transversal at $(x,s_i)$. Then, by
moving $x$ a little bit we can make $s_i$ and $s_j$ distinct, which we will
assume from now on (for all pairs $i\not=j$).
We set $D =\{x\} \times I \subset V'\times I$,
$S = \pi_{V'\times \R}(\Lambda) \cap D = \{s_1,\ldots, s_l \}$
and $S_1 = \pi_{V'\times \R}(\Lambda_1) \cap D$.

\medskip\noindent
(C) We remark that $F|_D$ is simple.  By Proposition~\ref{prop:simplesheafR} it
follows that $F|_D$ is a sum of sheaves of the type $\cor_{]a,s_i[}$,
$\cor_{[s_i,b[}$ or $\cor_{[s_i,s_j[}$, up to shift, where $s_i,s_j\in S$.
Since $\Lambda_1 \to B_{r}^{V^2}$ has degree $1$, the set $S_1$ is of odd
order. Hence there exist $s \in S_1$ and $d\in\Z$ such that $F|_D[d]$ has a
direct summand isomorphic to $\cor_{]a,s[}$, $\cor_{[s,b[}$, $\cor_{[s,s'[}$ or
$\cor_{[s',s[}$, with $s'\in S\setminus S_1$.

In case the summand is $\cor_{[s,s'[}$ or $\cor_{[s',s[}$,
Proposition~\ref{prop:camellight} gives $|s'-s| \geq r_1$ (recall that $r_1$ is
given by Proposition~\ref{prop:cut-off_split_local}).
We could also have assumed in~(B) that $S_1 \subset \mo{]}a+r_1,b-r_1[$.
Hence in any case we obtain the following.
We set $F' = \opb{T_s}F[d]$, where $T_s\cl V'\times\R \to V'\times\R$ is the
translation $(v',t) \mapsto (v',t+s)$. Then $F'|_{D_{4,x}}$ has
a direct summand isomorphic to $\cor_{ ]-r_1,0[}$ or $\cor_{ [0,r_1[}$.
We have $(x,0) \in \pi_{V'\times \R}(\Lambda_1)$
and  $(x,0) \not\in \pi_{V^2\times \R}(\Lambda \setminus \Lambda_1)$.

\medskip\noindent
(D) Let us set $B = B_{r_1/2}^{V^2\times\R}$. For $x$ close enough to $0$
we have $B\subset B_{(x,0),r_1}^{V^2\times\R}$. By
Proposition~\ref{prop:cut-off_split_local} we have a distinguished triangle
$$
F'_1 \oplus F'_2 \to F'|_B \to L \to[+1],
$$
where $F'_1,F'_2,L \in \Derb(\cor_B)$ satisfy
$\SSi(L) \subset T^*_BB$, $\dot\SSi(F'_1) = \Lambda_1 \cap T^*B$
and $\dot\SSi(F'_2) \cap \Lambda_1 =\emptyset$.
By Lemma~\ref{lem:deux_decompositions} we obtain that $F'_1|_{D_{4,x}}$ or
$F'_2|_{D_{4,x}}$ has a direct summand isomorphic to $\cor_{ ]-r_1,0[}$ or
$\cor_{ [0,r_1[}$.
Since $(x,0) \not\in \pi_{V^2\times \R}(\Lambda \setminus \Lambda_1)$ we deduce
that $F'_1|_{D_{4,x}}$ has a direct summand isomorphic to $\cor_{ [0,r_1[}$ or
$\cor_{ ]-r_1,0[}$.

\medskip\noindent
(E) We define  $F_1 = H^0(F'_1) \in \Mod(\cor_W)$.
Then $F_1|_{D_{4,x}}$ also has a direct summand isomorphic to $\cor_{ [0,r_1[}$
or $\cor_{ ]-r_1,0[}$.
Hence the conclusion~(b) of the corollary holds with
$\sect(W_i;F_1)$ replaced by $\sect(D_{i,x};F_1|_{D_{4,x}})$.
By Lemma~\ref{lem:ext_sect} and the inclusions~\eqref{eq:cor-sect-fix-open}
we deduce that~(b) also holds as stated.

By Proposition~\ref{prop:boundSStronc} we have
$\SSi(F_1) \subset \Conv(\SSi(F'_1))$, which gives the assertion~(a) of the
corollary.
\end{proof}

\section{Quantization}

In this section we recall the main result of~\cite{GKS12}.  Let $N$ be a
manifold and $I$ an open interval of $\R$ containing $0$.  We consider a
homogeneous Hamiltonian isotopy $\Psi\cl \dT^*N \times I \to \dT^*N$ of
class $\Cinf$. For $t\in I$, $p\in \dT^*N$ we set $\Psi_t(p) = \Psi(p,t)$.
Hence $\Psi_0 = \id_{\dT^*N}$ and, for each $t\in I$, $\Psi_t$ is symplectic
diffeomorphism such that $\Psi_t(x;\lambda\xi) = \lambda\cdot \Psi_t(x;\xi)$,
for all $(x;\xi) \in \dT^*N$ and $\lambda>0$.
We let $\Lambda_{\Psi_t} \subset \dT^*N^2$ be the twisted graph of $\Psi_t$.
We can see that there exists a unique conic Lagrangian submanifold
$\Lambda_\Psi \subset \dT^*(N^2 \times I)$ such that 
$\Lambda_{\Psi_t} = i_{t,d} \opb{i_{t,\pi}}(\Lambda_\Psi)$, for all $t\in I$,
where $i_t$ is the embedding $N^2\times \{t\} \to N^2 \times I$.
We let $\Derlb(\cor_{N^2\times I})$ be the full subcategory of
$\Der(\cor_{N^2\times I})$ formed by the $F$ such that $F|_C \in \Derb(\cor_C)$
for all compact subsets $C \subset N^2\times I$.

\begin{theorem}[Theorem.~4.3 of~\cite{GKS12}]\label{thm:GKS}
There exists a unique $K_\Psi \in\Derlb(\cor_{N^2\times I})$ such that
$\dot\SSi(K_\Psi) \subset \Lambda_\Psi$ and
$K_\Psi|_{N^2\times \{0\}} \simeq \cor_\Delta$.
Moreover $K_\Psi$ is simple along $\Lambda_\Psi$ and both projections
$\supp(K_\Psi) \to N\times I$ are proper.
\end{theorem}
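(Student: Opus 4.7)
The plan is to view the statement as a Cauchy problem on $N^2\times I$: the sheaf $K_\Psi$ is uniquely determined by its restriction $\cor_\Delta$ at $t=0$, and its existence is established locally and then glued using uniqueness. The heart of the argument is a propagation principle: an object of $\Derlb(\cor_{N^2\times I})$ with microsupport in $\Lambda_\Psi$ is rigidly controlled in the $t$-direction.

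For uniqueness, suppose $K$ and $K'$ both satisfy the conditions, and consider $F = \rhom(K, K')$. Using Theorem~\ref{th:opboim} and Corollary~\ref{cor:opboim}, together with the key geometric fact that on each fiber $\Lambda_{\Psi_t}$ is the twisted graph of the diffeomorphism $\Psi_t$, one checks that $\Lambda_\Psi \cap \Lambda_\Psi^a \subset T^*_{N^2\times I}(N^2\times I)$: indeed, a common element forces $\xi=-\xi$ on $N^2$, hence vanishes in $\dT^*N^2$, and then the component along $I$ also vanishes. This gives a bound on $\SSi(F)$ showing that its non-zero directions along $N^2$ are trivial, so that $F$ is locally constant along the fibers of $q\cl N^2\times I \to I$ (by Corollary~\ref{cor:opbeqv} applied fiberwise on the contractible base $I$). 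The identity endomorphism of $\cor_\Delta$ at $t=0$ then extends uniquely to a global section of $F$ over $N^2\times I$, i.e.\ to a morphism $K\to K'$. Applying the same reasoning to its cone, which restricts to $0$ at $t=0$, forces the cone to vanish, and we obtain $K \simeq K'$.

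For existence, one constructs $K_\Psi$ locally in $t$. Around $t=0$, where $\Psi_t$ is close to $\id$ and $\Lambda_{\Psi_t}$ remains close to $T^*_\Delta N^2$, one can use a generating-function description of $\Psi$ and exhibit an explicit candidate $K_{\text{loc}}$ whose microsupport outside the zero section is exactly $\Lambda_\Psi$ on the given $t$-interval (for example, a shifted constant sheaf on a sublevel set of the generating function). Patching a cover of $I$ by such local intervals via the unique isomorphisms furnished by the uniqueness step produces the global $K_\Psi$.

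Simplicity of $K_\Psi$ along $\Lambda_\Psi$ follows from simplicity of $\cor_\Delta$ along $T^*_\Delta N^2$ at $t=0$ together with the fact recalled in Section~\ref{sec:simplesheaves}: on a connected conic Lagrangian such as $\Lambda_\Psi$, simplicity at one point implies simplicity everywhere. Properness of both projections $\supp(K_\Psi) \to N\times I$ is obtained by tracking the diagonal through the isotopy and using that $\Psi_t$ is a fiber-preserving homogeneous diffeomorphism of $\dT^*N$. The step I expect to be the main obstacle is making the uniqueness argument fully rigorous, in particular justifying the $\SSi$ bound on $\rhom(K,K')$ when $\SSi(K)\cap\SSi(K')$ need not lie in the zero section so that Corollary~\ref{cor:opboim}~(ii) does not apply off the shelf; one needs to reduce to the diagonal and exploit the Lagrangian-diffeomorphism geometry at the level of the external operations in Theorem~\ref{th:opboim}~(i)-(ii).
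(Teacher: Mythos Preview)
This theorem is not proved in the present paper: it is quoted verbatim from~\cite{GKS12}, and the only comment the paper adds is the one-line remark that simplicity along $\Lambda_\Psi$ follows from $K_\Psi|_{t=0}\simeq\cor_\Delta$ together with connectedness of $\Lambda_\Psi$. So there is no ``paper's own proof'' to compare against here.

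That said, your outline does track the strategy of~\cite{GKS12} (Cauchy-problem viewpoint, uniqueness by propagation, local existence, gluing), and your justification of simplicity is exactly the one the paper records. The gap you flag in your last paragraph, however, is genuine and is not a minor technicality. Since $\dot\SSi(K)=\dot\SSi(K')=\Lambda_\Psi$, the intersection $\SSi(K)\cap\SSi(K')$ contains all of $\Lambda_\Psi$ and is certainly not in the zero section, so Corollary~\ref{cor:opboim}~(ii) simply does not apply to $\rhom(K,K')$, and the claimed bound on its microsupport cannot be obtained this way. In~\cite{GKS12} uniqueness is not proved by estimating $\SSi(\rhom(K,K'))$; instead one shows that for the full subcategory of objects with $\dot\SSi\subset\Lambda_\Psi$ the restriction functor $\opb{i_0}$ to $t=0$ is an equivalence. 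The key input is that $\Lambda_\Psi$ is non-characteristic for each slice inclusion $i_t$ and that, after composing with the kernel of the inverse isotopy, one reduces to the case $\Psi=\id$, where $\Lambda_\Psi = T^*_\Delta N^2 \times T^*_II$ and Corollary~\ref{cor:opbeqv} applies directly. Your $\rhom$ argument, as written, would need that reduction step (or an equivalent external-product/composition argument via Theorem~\ref{th:opboim}~(i)--(ii)) before any microsupport bound becomes available.
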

The fact that $K$ is simple along $\Lambda_\Psi$ is not explicitly stated
in~\cite{GKS12} but it follows from the claim
$K_\Psi|_{N^2\times \{0\}}\simeq \cor_\Delta$.

We reduce the case of non homogeneous Hamiltonian isotopies to the homogeneous
framework by adding one variable as follows. Let $M$ be a connected manifold
and let $\Phi\cl T^*M\times I \to T^*M$ be a Hamiltonian $\Cinf$ isotopy.
We assume that $\Phi$ has compact support, that is, there exists
a compact subset $C\subset T^*M$ such that $\Phi(p,t) = p$ for all
$p\in T^*M\setminus C$ and all $t\in I$.
We let $(s;\sigma)$ be the coordinates on $T^*\R$ and we recall the map
$\rho_M \cl T^*M\times \dT^*\R \to T^*M$,
$((x;\xi),(s;\sigma)) \mapsto (x;\xi/\sigma)$ defined
in~\eqref{eq:def_rho}. By~\cite[Prop. A.6]{GKS12} there exists a homogeneous
Hamiltonian isotopy $\Psi\cl \dT^*(M\times \R) \times I \to \dT^*(M\times \R)$
whose restriction to $T^*M\times \dT^*\R\times I$ gives the commutative diagram
\begin{equation}\label{eq:diag-PhiPsi}
\begin{tikzcd}[column sep=2cm]
T^*M\times \dT^*\R\times I \rar{\Psi} \dar{\rho_M \times\id_I}
                                 &T^*M\times \dT^*\R \dar{\rho_M}  \\
T^*M\times I\rar{\Phi}              & T^*M \pointdiag 
\end{tikzcd}
\end{equation}
Moreover there exists a $\Cinf$-function $u\cl (T^*M) \times I\to\R$ 
such that
\begin{equation}\label{eq:expr_twPhi}
\Psi((x;\xi),(s;\sigma),t) = ((x';\xi'), (s+ u(x,\xi/\sigma,t);\sigma)),
\end{equation}
where $(x';\xi'/\sigma) = \Phi_t(x;\xi/\sigma)$.
In particular, if $(x;\xi/\sigma) \not\in C$, we have $(x';\xi') = (x;\xi)$.
Since $\Psi_t$ is symplectic, we deduce $d(u|_{T^*M \times\{t\}})=0$
outside $C$. It follows that if $\Omega$ is a connected component of
$T^*M \setminus C$, then there exists $v_\Omega\cl I \to \R$ such that
\begin{equation}\label{eq:expr_twPhi2}
\Psi((x;\xi),(s;\sigma),t) = ((x;\xi), (s+ v_\Omega(t);\sigma)),
\quad \text{for all $(x;\xi) \in \Omega$.}
\end{equation}

\begin{corollary}\label{cor:quantconic}
Let $\Phi\cl T^*M\times I \to T^*M$ be a Hamiltonian $\Cinf$ isotopy with
compact support.  Then, for any $t\in I$, there exist a closed conic connected
Lagrangian submanifold $\Lambda_t \subset \dT^*(M^2\times\R)$ and
$K_{t} \in \Derb(\cor_{M^2 \times \R})$ such that
\begin{itemize}
\item [(i)] $\rho_{M^2}\cl T^*M^2 \times \dT^*\R \to T^*M^2$ induces a
  diffeomorphism between
  $(\Lambda_t \cap (T^*M^2 \times \dT^*\R))/\R^\times$ and
  $\Lambda_{\Phi_t}$, the twisted graph of $\Phi_t$,
\item [(ii)] $\dot\SSi(K_{t}) = \Lambda_t$ and $K_{t}$ is simple
  along $\Lambda_t$.
\end{itemize}
\end{corollary}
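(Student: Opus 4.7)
The plan is to lift the Hamiltonian isotopy $\Phi$ to a homogeneous Hamiltonian isotopy $\Psi$ on $\dT^*(M\times\R)$, quantize $\Psi$ via Theorem~\ref{thm:GKS}, and then push the resulting kernel down to $M^2\times\R$ using the translation symmetry in the added variable. The lift $\Psi$ and its explicit form are already provided by the commutative diagram~\eqref{eq:diag-PhiPsi} and formula~\eqref{eq:expr_twPhi}, so the first real step is to run Theorem~\ref{thm:GKS} on $\Psi$ and extract $K_\Psi\in\Derlb(\cor_{(M\times\R)^2\times I})$, which is simple along $\Lambda_\Psi$, satisfies $K_\Psi|_{t=0}\simeq\cor_\Delta$, and has proper support over $(M\times\R)\times I$.

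Next, formula~\eqref{eq:expr_twPhi} shows that $\Psi_t$ commutes with simultaneous translation of the $s$-coordinate. By the uniqueness clause of Theorem~\ref{thm:GKS}, $K_\Psi$ inherits this invariance, so it is the pullback of a sheaf on $M^2\times\R\times I$ under the difference map $(x_1,s_1,x_2,s_2,t)\mapsto(x_1,x_2,s_1-s_2,t)$, and similarly $\Lambda_\Psi$ descends to a conic Lagrangian in $\dT^*(M^2\times\R\times I)$. I would then restrict at time $t$ to define $K_t$ and $\Lambda_t$.

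For point (i) I would use~\eqref{eq:expr_twPhi} to describe $\Lambda_{\Psi_t}\cap(T^*M^2\times\dT^*\R^2)$: its two $\sigma$-components are opposite because of the antipodal twist, so after passing to the quotient by the $\R^\times$-action and the diagonal $s$-translation it matches the twisted graph $\Lambda_{\Phi_t}$ through $\rho_{M^2}$, giving the claimed diffeomorphism. For point (ii), non-characteristicity of the time inclusion $i_t$ for $\Lambda_\Psi$ (true because $\Lambda_\Psi$ is transverse to the fibers of the projection to $I$, $\Psi$ being an isotopy with $\Psi_0=\id$) combined with Theorem~\ref{th:opboim}~(iii) gives $\dot\SSi(K_t)\subset\Lambda_t$; simplicity of $K_\Psi$ along $\Lambda_\Psi$ restricts to simplicity of $K_t$ along $\Lambda_t$ at least at one point, and the propagation statement recalled in Section~\ref{sec:simplesheaves} extends both simplicity and the equality $\dot\SSi(K_t)=\Lambda_t$ to all of $\Lambda_t$ by connectedness.

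The main obstacle is upgrading $K_t$ from $\Derlb$ to $\Derb$. Here the compact support hypothesis on $\Phi$ and formula~\eqref{eq:expr_twPhi2} become crucial: on the unbounded component of $T^*M\setminus C$, $\Psi_t$ acts as a pure $s$-translation by $v(t)$, so outside a large compact of $M^2$ the descended $\Lambda_t$ is the conormal of a smooth graph, and $K_t$ must agree there with a shifted constant sheaf on that graph. A continuity argument in $t$ forces that shift to remain $0$, since it is locally constant and equals $0$ at $t=0$. Combined with the local boundedness from $\Derlb$, this yields $K_t\in\Derb(\cor_{M^2\times\R})$; connectedness of $\Lambda_t$ then follows from connectedness of $\Lambda_0$ (the twisted conormal of the diagonal) via the diffeomorphism $\Psi_0\rightsquigarrow\Psi_t$ between time slices of $\Lambda_\Psi$.
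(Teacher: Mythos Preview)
Your approach follows the paper's: lift to the homogeneous isotopy $\Psi$, apply Theorem~\ref{thm:GKS}, and descend along the difference map $q\cl(x,s,x',s',t)\mapsto(x,x',s-s',t)$. Two places where the paper is more careful deserve mention. First, for the descent, the paper does not pass through the uniqueness clause: knowing $T_a^{-1}K_\Psi\simeq K_\Psi$ for each translation $T_a$ separately is not enough to conclude that $K_\Psi$ is a pullback. Instead one reads off from~\eqref{eq:expr_twPhi} that $\Lambda_\Psi\subset\Sigma=\{\sigma+\sigma'=0\}=\operatorname{im} q_d$, and then Corollary~\ref{cor:opbeqv} gives the descent directly. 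Second, for the upgrade to $\Derb$, the paper reverses your order: it first proves $K_{\Psi,t}\in\Derb(\cor_{(M\times\R)^2})$ and only then applies Corollary~\ref{cor:opbeqv} (which is stated in $\Derb$). In that boundedness argument it treats not only the region $U\times U$ you describe (where, by uniqueness applied to $K_\Psi|_{(U\times\R)^2\times I}$, the kernel is explicitly identified via~\eqref{eq:expr_twPhi2}, with a case distinction when $M=S^1$ and $T^*M\setminus C$ has two components), but also the off-diagonal strips $Z\times U$ and $U\times Z$: there $\Lambda_\Psi$ has empty projection, so $K_\Psi$ is locally constant and vanishes since it does at $t=0$. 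Your sketch omits these strips, without which boundedness outside a compact of $M^2$ is not yet established.
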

\begin{proof}
(i) Let $\Psi$ be the homogeneous Hamiltonian isotopy introduced in the
diagram~\eqref{eq:diag-PhiPsi}.  We see on~\eqref{eq:expr_twPhi} that $\Psi$
preserves the variable $\sigma$, that is, $\Lambda_\Psi$ is contained in
$\Sigma \eqdot \{\sigma+\sigma'=0\}$.
Let us define $q\cl (M\times \R)^2\times I \to M^2 \times \R\times I$,
$(x,s,x',s',t) \mapsto (x,x',s-s',t)$. Then $\Sigma = \operatorname{im} q_d$ 
and the quotient map to the symplectic reduction of $\Sigma$ is $q_\pi$.
Hence we can write $\Lambda_{\Psi} = q_d \opb{q_\pi} (\Lambda)$, where
$\Lambda \subset \dT^*(M^2 \times \R)$ is given by 
$\Lambda = q_\pi \opb{q_d} (\Lambda_{\Psi})$.
Now we set $\Lambda_t = i_{t,d} \opb{i_{t,\pi}}(\Lambda)$, where
$i_t \cl M^2 \times \R\times \{t\} \to M^2 \times \R\times I$ is the
embedding, and~(i) follows from the diagram~\eqref{eq:diag-PhiPsi}.

\medskip\noindent
(ii) Let $K_\Psi \in \Derlb(\cor_{(M\times\R)^2\times I})$ be given by
Theorem~\ref{thm:GKS}. Let us first check that
$K_{\Psi,t} \eqdot K_\Psi|_{(M\times\R)^2\times \{t\}} \in \Derb(\cor_{(M\times\R)^2})$
for any $t\in I$.

Let $C\subset T^*M$ be a compact subset such that $\Phi(p,t) = p$ for $p$
outside $C$. We remark that we may enlarge $C$ so that
$\Omega = T^*M \setminus C$ has a single connected component when $M$ is not the
circle $S^1$, and two components, say $\Omega_\pm$, corresponding to
$\pm \xi \gg0$, when $M = S^1$.  We set $Z = \pi_M(C)$, $U=M\setminus Z$ and
$q_t = q|_{(M\times \R)^2\times \{t\}}$.  By~\eqref{eq:expr_twPhi2} and the
unicity of $K_\Psi|_{(U\times\R)^2\times I}$ we find
$$
K_{\Psi,t}|_{(U\times\R)^2} \simeq
\begin{cases}
 \opb{q_t} \cor_{\Delta_M \times \{v_\Omega(t)\}},
& \hspace{-5mm}\text{if $M \not= S^1$,} \\
\begin{cases}
  \opb{q_t} 
  \cor_{\Delta_M \times \mo{]}v_{\Omega_-}(t), v_{\Omega_+}(t)[} [1], \\
 \text{or } \opb{q_t}
\cor_{\Delta_M \times [v_{\Omega_+}(t), v_{\Omega_-}(t)]} ,
\end{cases}
& \hspace{-5mm}\text{if $M = S^1$.}
\end{cases}
$$
We also have $\Lambda_\Psi \cap \dT^*(Z \times U \times \R^2) = \emptyset$.
Hence $K_\Psi$ is locally constant on $Z\times U \times \R^2 \times I$.
We deduce $K_\Psi|_{Z\times U \times \R^2 \times I} \simeq 0$ since this holds at $t=0$.
In the same way $K_\Psi|_{U\times Z \times \R^2 \times I} \simeq 0$.

We conclude that $K_{\Psi,t}|_{(M^2 \setminus Z^2)\times \R^2}$ is a bounded complex.
Since $Z^2$ is compact we have $K_{\Psi,t} \in \Derb(\cor_{(M\times\R)^2})$ as
claimed.

Now, by~(i) and Corollary~\ref{cor:opbeqv} there exists
$K_t \in \Derb(\cor_{M^2 \times \R})$ such that $K_{\Psi,t} \simeq \opb{q}K_t$.
Then $K_t$ satisfies~(ii).
\end{proof}

Now we apply Proposition~\ref{prop:sections-fixed-open} to the quantization
given by Corollary~\ref{cor:quantconic} in the following situation.
Let $V=\R^n$ be a vector space and let $r,A>0$. We will consider the following
hypothesis on a symplectic map $\varphi \cl T^*V \to T^*V$:
\begin{equation}\label{eq:hyp-part-quant}
\left\{  \hspace{-5mm} \begin{minipage}{11cm}
\begin{itemize}
\item[(i)] there exists a $\Cinf$ Hamiltonian isotopy with compact support,
  $\Phi\cl T^*V\times I \to T^*V$, such that $\varphi = \Phi_1$,
\item[(ii)] $\Lambda_\varphi \cap (B_{r}^{V^2} \times B_{3Ar}^{V^{*2}})
\subset  B_{r}^{V^2} \times B_{2Ar}^{V^{*2}}$,
\item[(iii)] setting $\Lambda_\varphi^1 
  = \Lambda_\varphi \cap (B_{r}^{V^2} \times B_{3Ar}^{V^{*2}})$, the map
  $\Lambda_\varphi^1 \to B_{r}^{V^2}$ induced by the projection to the base is
  of degree $1$.
\end{itemize} \end{minipage} \right.
\end{equation}

\begin{corollary}\label{cor:part-quant}
Let $A,r>0$ be given.
There exist non empty connected open subsets $W_i$, $i=1,\ldots,4$, of
$B_{r}^{V^2}\times\R$ such that $\ol{W_i} \subset W_{i+1}$, for $i=1,\ldots,3$,
which satisfy the following.
For any symplectic map $\varphi \cl T^*V \to T^*V$
satisfying~\eqref{eq:hyp-part-quant}, there exists $L\in \Mod(\cor_{W_4})$
such that
\begin{itemize}
\item [(i)] $\dot\SSi(L) \subset T^*W_4 \cap T^*_{\sigma>0}(V^2\times\R)$
and $\rho_{V^2}(\dot\SSi(L)) \subset  \Conv(\Lambda_\varphi^1)$,
\item [(ii)] there exists $u\in \sect(W_4;L)$ such that $u|_{W_3} \not=0$ and
  $u|_{W_2} =0$ or there exists $v\in \sect(W_2;L)$ such that $v|_{W_1}
  \not=0$ and $v$ is not in the image of $\sect(W_3;L) \to \sect(W_2;L)$.
\end{itemize}
\end{corollary}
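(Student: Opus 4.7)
The plan is to derive $L$ by applying Proposition~\ref{prop:sections-fixed-open} to the quantization of $\varphi$ supplied by Corollary~\ref{cor:quantconic}, with $V^2$ playing the role of $V'$ and the conification variable $s$ that of $x_n$. Using \eqref{eq:hyp-part-quant}(i), Corollary~\ref{cor:quantconic} applied at $t=1$ produces a closed conic Lagrangian $\Lambda \subset \dT^*(V^2 \times \R)$ and $K \in \Derb(\cor_{V^2\times\R})$ such that $K$ is simple along $\Lambda = \dot\SSi(K)$ and $\rho_{V^2}$ induces a diffeomorphism $(\Lambda \cap (T^*V^2 \times \dT^*\R))/\R^\times \isoto \Lambda_\varphi$. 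I will then set $U = B_r^{V^2}\times\R$, choose $c_1 = 2Ar$ and $c_2 = 5Ar/2$, and apply Proposition~\ref{prop:sections-fixed-open} to $F = K|_U$ with these $r, c_1, c_2$, taking for the $W_i$ of the corollary the open sets produced by the proposition (which depend only on $r$ and $A$).

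The three hypotheses of the proposition must then be checked. Under our identifications, $\gamma_c^{\circ a}$ is the closed cone $\{(\eta,\sigma) : \|\eta\| \leq c\sigma\}$, so a point of $\Lambda$ lying in $U \times (\gamma_{c_2}^{\circ a} \setminus \Int\gamma_{c_1}^{\circ a})$ necessarily has $\sigma > 0$ and satisfies $\|\eta\|/\sigma \in [2Ar,\, 5Ar/2]$; its image by $\rho_{V^2}$ would be a point of $\Lambda_\varphi \cap (B_r^{V^2} \times B_{3Ar}^{V^{*2}})$ outside $B_r^{V^2} \times B_{2Ar}^{V^{*2}}$, contradicting \eqref{eq:hyp-part-quant}(ii); this gives hypothesis (i). Hypothesis (ii) is exactly Corollary~\ref{cor:quantconic}(ii). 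For hypothesis (iii), the diffeomorphism above identifies $\Lambda_1 / \R_{>0}$ with $\Lambda_\varphi \cap (B_r^{V^2} \times \ol{B_{2Ar}^{V^{*2}}})$; since \eqref{eq:hyp-part-quant}(ii) forces this closed-ball restriction to coincide with $\Lambda_\varphi^1$, properness and degree $1$ of the projection to $B_r^{V^2}$ follow from \eqref{eq:hyp-part-quant}(iii).

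The proposition then yields $F_1 \in \Mod(\cor_{W_4})$ with $\dot\SSi(F_1) \subset T^*W_4 \cap T^*_{\sigma>0}(V^2\times\R)$, $\dot\SSi(F_1) \subset \Conv(\Lambda_1)$, and the required sections condition; I take $L = F_1$, which immediately yields conclusion (ii) of the corollary. The main remaining step, and the place where one must be careful, is the inclusion $\rho_{V^2}(\dot\SSi(L)) \subset \Conv(\Lambda_\varphi^1)$ of conclusion (i): the map $\rho_{V^2}$ is not linear on cotangent fibers, so one cannot naively interchange it with $\Conv$. The fix is that $\Conv(\Lambda_1)$ lives in $\{\sigma > 0\}$, and a fiberwise convex combination $\lambda(\eta_1,\sigma_1) + (1-\lambda)(\eta_2,\sigma_2)$ is mapped by $\rho_{V^2}$ to $\mu\,\eta_1/\sigma_1 + (1-\mu)\,\eta_2/\sigma_2$ with the reweighted coefficient $\mu = \lambda\sigma_1/(\lambda\sigma_1 + (1-\lambda)\sigma_2) \in [0,1]$, so $\rho_{V^2}$ preserves convex hulls in each fiber over $V^2$; combined with $\rho_{V^2}(\Lambda_1) = \Lambda_\varphi^1$, this gives the required inclusion.
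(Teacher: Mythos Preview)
Your proof is correct and follows the same strategy as the paper: apply Corollary~\ref{cor:quantconic} to get $K$, then feed $F=K|_U$ into Proposition~\ref{prop:sections-fixed-open} with $V'=V^2$ and suitable $c_1<c_2$, and take $L=F_1$. Two remarks. First, your choice $c_1=2Ar$, $c_2=5Ar/2$ is adapted to the computation $\gamma_c^{\circ a}=\{(\eta,\sigma):\|\eta\|\le c\sigma\}$ and makes the verification of~\eqref{eq:hypSSF_split} and of hypothesis~(iii) transparent; the paper writes $c_1=(3Ar)^{-1}$, $c_2=(2Ar)^{-1}$, but the argument is the same. Second, you supply the step the paper leaves implicit, namely that $\rho_{V^2}$ sends fiberwise convex hulls in $\{\sigma>0\}$ to fiberwise convex hulls via the reweighting $\mu=\lambda\sigma_1/(\lambda\sigma_1+(1-\lambda)\sigma_2)$; this is exactly what is needed to pass from $\dot\SSi(F_1)\subset\Conv(\Lambda_1)$ to $\rho_{V^2}(\dot\SSi(L))\subset\Conv(\Lambda_\varphi^1)$.
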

\begin{proof}
By Corollary~\ref{cor:quantconic} there exist a conic closed Lagrangian
submanifold $\Lambda \subset\dT^*(V^2 \times \R)$ and
$K \in \Derb(\cor_{V^2 \times \R})$ such that
\begin{itemize}
\item [(a)] $\rho_{V^2}\cl T^*V^2 \times \dT^*\R \to T^*V^2$ induces a
  diffeomorphism between $(\Lambda \cap (T^*V^2 \times \dT^*\R))/\R^\times$ and
  $\Lambda_\varphi$,
\item [(b)] $\dot\SSi(K) = \Lambda$ and $K$ is simple along $\Lambda$.
\end{itemize}
We set $c_1 = (3Ar)^{-1}$ and $c_2 = (2Ar)^{-1}$.  We apply
Proposition~\ref{prop:sections-fixed-open} with $c_1,c_2,r$ and $V'=V^2$.
We obtain open subsets $W_i$ of $B_r^{V^2} \times \R$, $i=1,\dots,4$.
The hypothesis~\eqref{eq:hyp-part-quant}-(ii) implies that $K$
satisfies~\eqref{eq:hypSSF_split} and the
hypothesis~\eqref{eq:hyp-part-quant}-(iii) implies that $K$ satisfies~(iii) of
Proposition~\ref{prop:sections-fixed-open}.  Hence the proposition gives
$L\in \Mod(\cor_{W_4})$ satisfying the required properties.
\end{proof}

\section{Proof of the Gromov-Eliashberg theorem}
\label{sec:GET}

We use the notations in the statement of Theorem~\ref{thm:GE}.

\subsection{}
Up to a translation in $E$ it is enough to prove that $d\varphi_\infty|_0$ is a
symplectic linear map and we work near $0$.  We can also assume that
$\varphi_\infty(0)=0$.  By Lemma~\ref{lem:gen_pos} we can identify $E$ with
$T^*V$ for some vector space $V$ and assume that $\varphi_\infty$
satisfies~\eqref{eq:gen_pos_ineq} and~\eqref{eq:gen_pos_ineq2} for some
$A,r_0>0$.

\subsection{}
By Proposition~\ref{prop:appr_Hamisot}, for each $n\in \N$ we can find
a $\Cinf$ Hamiltonian isotopy $\Phi_n \cl E\times \R \to E$ and a compact
subset $C_n \subset E$ such that
$\| \varphi_n - \Phi_{n,1} \|_{\ol{B_{r_0}^E}} \leq 1/n$
and $\Phi_{n,t} |_{E\setminus C_n} = \id _{E\setminus C_n}$ for all $t\in\R$.  We
still have $\| \Phi_{n,1} - \varphi_\infty \|_{\ol{B_{r_0}^E}} \to 0 $ when
$n\to \infty$ and we may assume that $\varphi_n = \Phi_{n,1}$.
We choose $0<r<r_0$ (arbitrarily small).  By Lemma~\ref{lem:proche_gen_pos}
there exists $N_r \in \N$ such that
\begin{equation}\label{eq:gen_pos_ineq3}
\Lambda_{\varphi_n} \cap (B_{r}^{V^2} \times B_{3Ar}^{V^{*2}})
\subset  B_{r}^{V^2} \times B_{2Ar}^{V^{*2}}  ,
\quad \text{ for all $n \geq N_r$.}
\end{equation}

\subsection{}\label{sec:pf3}
We set $\Lambda^1_n = \Lambda_{\varphi_n} \cap (B_{r}^{V^2} \times B_{3Ar}^{V^{*2}})$.
By Proposition~\ref{prop:degree} the map $\Lambda^1_n \to B_{r}^{V^2}$ has degree
$1$, for $n\geq N_r$.
Let $W_i$, $i=1,\ldots,4$, be the non empty connected open subsets of
$B_{r}^{V^2}\times\R$ given by Corollary~\ref{cor:part-quant}.
We apply Corollary~\ref{cor:part-quant} to $\varphi_n$ and we
obtain $L_n\in \Mod(\cor_{W_4})$ such that
\begin{itemize}
\item [(i)] $\dot\SSi(L_n) \subset T^*W_4 \cap T^*_{\sigma>0}(V^2\times\R)$
and $\rho_{V^2}(\dot\SSi(L_n)) \subset  \Conv(\Lambda_n^1)$,
\item [(ii-a)] there exists $u_n\in \sect(W_4;L_n)$ such that
  $u_n|_{W_3} \not=0$ and $u_n|_{W_2} =0$,
\item [(ii-b)] or there exists $v_n\in \sect(W_2;L_n)$ such that $v_n|_{W_1}
  \not=0$ and $v_n$ is not in the image of $\sect(W_3;L_n) \to \sect(W_2;L_n)$.
\end{itemize}
One of the two possibilities (ii-a) or (ii-b) occurs infinitely many times.  Up
to taking a subsequence we will assume that (ii-a) holds for all $n\in \N$ (the
other case being similar).

\subsection{}\label{sec:pf4}
We define $L_\infty \in \Mod(\cor_{W_4})$ by the exact sequence
$$
0 \to \bigoplus_{n \geq N_r} L_n \to \prod_{n \geq N_r} L_n \to  L_\infty \to 0.
$$
The sections $u_n \in \sect(W_4; L_n)$ define $u\in \sect(W_4; L_\infty)$.
By~(ii-a) in~\S\ref{sec:pf3} we have $u|_{W_2} =0$.  Since $\ol{W_3}$ is compact
we have $\sect(\ol{W_3};\bigoplus_{n\geq N_r} L_n)
\simeq \bigoplus_{n\geq N_r}\sect(\ol{W_3}; L_n)$.
If $u=0$, we deduce that
$(u_n)_{n \geq N_r}|_{\ol{W_3}}$ belongs to
$\bigoplus_{n \geq N_r}\sect(\ol{W_3}; L_n)$,
which implies $u_n|_{\ol{W_3}} =0$ for $n$ big. But this
contradicts~(ii-a) and it follows that $u\not=0$. Hence $L_\infty$ is not
locally constant and $\dot\SSi(L_\infty) \not=\emptyset$.

\subsection{}
Proposition~\ref{prop:boundSSlim} gives $\dot\SSi(L_\infty) \subset 
\opb{\rho_{V^2}} (\bigcap_{k\geq N_r} \ol{\bigcup_{n\geq k} \Conv(\Lambda^1_n)})$.
By hypothesis $\{\Lambda^1_n\}_{n\in\N}$ converges to $\Lambda^1_\infty \eqdot 
\Lambda_{\varphi_\infty} \cap (B_{r}^{V^2} \times B_{3Ar}^{V^{*2}})$.
Since $\varphi_\infty$ satisfies~\eqref{eq:gen_pos_ineq2}, the set
$\Lambda^1_\infty$ is a section of the projection to the base
$T^*(B_{r}^{V^2}) \to B_{r}^{V^2}$.
Hence $\{\Conv(\Lambda^1_n)\}_{n\in\N}$ also converges to $\Lambda^1_\infty$
and we obtain
$\dot\SSi(L_\infty) \subset \opb{\rho_{V^2}} (\Lambda_{\varphi_\infty}) \cap
\opb{\pi_{V^2\times \R}}(W_4)$.

Let us choose $p\in \dot\SSi(L_\infty)$.  By the involutivity Theorem and
Proposition~\ref{prop:invol_invol} we obtain that $\Lambda_{\varphi_\infty}$ is
coisotropic at $\rho_{V^2}(p)$.
Since $\rho_{V^2}(p) \in B_{r}^{V^2} \times B_{3Ar}^{V^{*2}}$ and $r$ can be
chosen arbitrarily small, we deduce that $\Lambda_{\varphi_\infty}$ is
coisotropic at $0$, as required.

\providecommand{\bysame}{\leavevmode\hbox to3em{\hrulefill}\thinspace}

\vspace{2cm}
\noindent
\parbox[t]{21em}
{\scriptsize{
\noindent
St{\'e}phane Guillermou \\
Institut Fourier, Universit{\'e} Grenoble I, \\
email: Stephane.Guillermou@ujf-grenoble.fr
}}

\end{document}